\newcommand{\Lie}{\mathcal{L}}
\newcommand{\tr}{\mathrm{tr}}
\newcommand{\p}{\partial}
\newcommand{\UN}{\mathrm{U}(N)}
\newtheorem{theorem}{Theorem}
\newtheorem{lemma}{Lemma}
\newtheorem{corollary}{Corollary}
\newtheorem{remark}{Remark}
\newtheorem{proposition}{Proposition}
\numberwithin{equation}{section}
\begin{document}
\title{Universality in Complex Wishart ensembles: The 2 cut case}
\author{M. Y. Mo}
\date{}
\maketitle
\begin{abstract} We considered $N\times N$ Wishart ensembles
in the class $W_\mathbb{C}\left(\Sigma_N,M\right)$ (complex Wishart
matrices with $M$ degrees of freedom and covariance matrix
$\Sigma_N$) such that $N_0$ eigenvalues of $\Sigma_N$ is 1 and
$N_1=N-N_0$ of them are $a$. We studied the limit as $M$, $N$, $N_0$
and $N_1$ all go to infinity such that $\frac{N}{M}\rightarrow c$,
$\frac{N_1}{N}\rightarrow\beta$ and $0<c,\beta<1$. In this case, the
limiting eigenvalue density can either be supported on 1 or 2
disjoint intervals in $\mathbb{R}_+$. We obtained a necessary and
sufficient condition on the parameters $a$, $\beta$ and $c$ such
that the limiting distribution is supported on 2 disjoint intervals
and have computed the eigenvalue density in the limit. Furthermore,
by using Riemann-Hilbert analysis similar to the one in
\cite{BKext1} (See also \cite{Lysov}), we have shown that under
proper rescaling of the eigenvalues, the limiting correlation kernel
is given by the sine kernel and the Airy kernel in the bulk and the
edge of the spectrum respectively. As a consequence, the behavior of
the largest eigenvalue in this model is described by the Tracy-Widom
distribution.
\end{abstract}
\section{Introduction}
Let $X$ be an $M\times N$ (assuming $M\geq N$) matrix with i.i.d.
complex Gaussian entries whose real and imaginary parts have
variance $\frac{1}{2}$ and zero mean. Let $\Sigma_N$ be an $N\times
N$ positive definite Hermitian matrix with eigenvalues
$a_1,\ldots,a_N$ (not necessarily distinct). In this paper, we will
consider the case where $\Sigma_N$ has only 2 distinct eigenvalues,
1 and $a$ such that $N_1$ of its eigenvalues are $a$ and $N-N_1$ of
them are $1$. We will assume that $\frac{N}{M}\rightarrow c$ and
$\frac{N_1}{N}\rightarrow\beta$ as $N,M\rightarrow\infty$ and that
$0<c,\beta<1$. To be precise, we will assume the following
\begin{equation}\label{eq:lim}
cM-N=\tau_1=O(1),\quad N\beta-N_1=\tau_2=O(1),\quad
M,N,N_1\rightarrow\infty.
\end{equation}
Let $\Sigma_N^{\frac{1}{2}}$ be any Hermitian square root of
$\Sigma_N$. Then the columns of the matrix $X\Sigma_N^{\frac{1}{2}}$
are random vectors with variances $\frac{1}{2}\sqrt{a_j}$. Let the
matrix $B_N$ be the following
\begin{equation}\label{eq:BN}
B_N=\frac{1}{M}\Sigma_N^{\frac{1}{2}}X^{\dag}X\Sigma_N^{\frac{1}{2}},
\end{equation}
Then $B_N$ is the sample covariance matrix of the columns of
$X\Sigma_N^{\frac{1}{2}}$, while $\Sigma_N$ is the covariance
matrix. In particular, $B_N$ is a complex Wishart matrix in the
class $W_{\mathbb{C}}\left(\Sigma_N,M\right)$.

The sample covariance matrix is a fundamental tool in the studies of
multivariate statistics and its distribution is already known to
Wishart at around 1928 (See e.g. \cite{Muir})
\begin{equation}\label{eq:wishart}
\mathcal{P}(B_N)=\frac{1}{C}e^{-M\tr(\Sigma^{-1}B_N)}\left(\det
S\right)^{M-N},\quad M\geq N,
\end{equation}
for some normalization constant $C$.

Let $y_1>\ldots>y_N>0$ be the eigenvalues of the sample covariance
matrix $B_N$ and let $A$, $\Lambda$ be diagonal matrices such that
\begin{equation*}
\begin{split}
&A=diag(a_1,\ldots,a_N),\quad\Lambda=diag(y_1,\ldots,y_N),\\
&B_N=U_B\Lambda U_B^{-1},\quad\Sigma_N=U_AAU_A^{-1},\quad U_A,
U_B\in U(N).
\end{split}
\end{equation*}
Then the eigenvalue density of $B_N$ is given by
\begin{equation}\label{eq:jpdf}
\mathcal{P}(y)=\frac{1}{C}\prod_{i<j}(y_i-y_j)^2\prod_{j=1}^Ny_{j}^{M-N}\int_{Q\in\UN}e^{-M\tr(A^{-1}Q\Lambda
Q^{-1})}dQ,
\end{equation}
where $Q=U_A^{-1}U_B\in U(N)$ is unitary and $dQ$ is the normalized
Haar measure on $U(N)$. The eigenvalue density of $B_N$ can be
simplified by using the Harish-Chandra \cite{HC} (or Itzykson-Zuber
\cite{IZ}) formula
\begin{equation}\label{eq:HCIZ}
\int_{Q\in U(N)}e^{-M\tr(A^{-1}Q\Lambda
Q^{-1})}dQ=\prod_{j=1}^N\Gamma(j)\frac{\det\left[e^{-Ma_i^{-1}y_j}\right]_{1\leq
i,j\leq N}} {\det\left[\left(-Ma_i\right)^{-j+1}\right]_{1\leq
i,j\leq N}\det\left[y_i^{j-1}\right]_{1\leq i,j\leq N}},
\end{equation}
where in the case of $A$ or $\Lambda$ having multiple eigenvalues,
the above should be interpreted by using the L'Hopital rule.

By using (\ref{eq:HCIZ}) in (\ref{eq:jpdf}), we see that in the case
where $N_1$ eigenvalues of $\Sigma_N$ is $a$ and $N-N_1$ is $1$, the
joint probability density function (j.p.d.f) for the eigenvalues of
$B_N$ is given by
\begin{equation}\label{eq:jpdf1}
\mathcal{P}(y)=\frac{1}{Z_{M,N}}\prod_{i<j}(y_i-y_j)\prod_{j=1}^Ny_{j}^{M-N}\det\left[
y_k^{d^N_j-1}e^{-Ma_j^{-1}y_k}\right]_{1\leq j,k,\leq N},
\end{equation}
where $Z_{M,N}$ is a normalization constant and $d^N_j$, $a_j$ are
given by
\begin{equation*}
\begin{split}
d^N_j&=j,\quad a_j=1,\quad 1\leq j\leq N-N_1,\\
d^N_j&=j-N+N_1,\quad a_j=a,\quad N-N_1< j\leq N.
\end{split}
\end{equation*}
In this paper we will study the asymptotic limit of the Wishart
distribution with $\frac{N}{M}\rightarrow c$ and
$\frac{N_1}{N}\rightarrow\beta$ as $M$, $N\rightarrow\infty$ in such
a way that $0<\beta,c<1$. In this case, the empirical distribution
function (e.d.f) $F_N$ of the eigenvalues will converge weakly to a
nonrandom p.d.f. $F$, which will be supported on either 1 or 2
intervals in $\mathbb{R}_+$. By applying the results of \cite{CS} to
our case, we can extract properties of the measure $F$ from the
solutions of an algebraic equation (See Section \ref{se:Stie} for
details)
\begin{equation}\label{eq:curve20}
\begin{split}
za\xi^3&+(A_2z+B_2)\xi^2+(z+B_1)\xi+1=0,\\
A_2&=(1+a),\quad B_2=a(1-c),\\
B_1&=1-c(1-\beta)+a(1-c\beta).
\end{split}
\end{equation}
Our first main result involves a necessary and sufficient condition
for $F$ to be supported on 2 intervals and an explicit formula for
the distribution $F$ (See Theorem \ref{thm:cuts}).
\begin{theorem}\label{thm:main1}
Let $\Delta$ be the discriminant of the quartic polynomial
\begin{equation}\label{eq:quartic1}
\begin{split}
a^2(1-c)\xi^4
&+2(a^2(1-c\beta)+a(1-c(1-\beta))\xi^3\\
&+(1-c(1-\beta)+a^2(1-c\beta)+4a)\xi^2 +2(1+a)\xi+1=0.
\end{split}
\end{equation}
then the support of $F$ consists of 2 disjoint intervals if and only
if $\Delta>0$.
\end{theorem}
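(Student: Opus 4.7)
The plan is to work with the inverse of the Stieltjes transform of $F$. Solving the cubic (\ref{eq:curve20}) algebraically for $z$ in terms of $\xi$ yields the explicit rational map
\[
z(\xi)=-\,\frac{B_{2}\xi^{2}+B_{1}\xi+1}{\xi(\xi+1)(a\xi+1)},
\]
using the factorisation $a\xi^{2}+(1+a)\xi+1=(\xi+1)(a\xi+1)$. Within the framework of \cite{CS}, the endpoints of $\mathrm{supp}(F)\subset\mathbb{R}_{+}$ are precisely the images $z(\xi_{*})$ at those real critical points $\xi_{*}$ of $z(\cdot)$ whose image is positive, and hence the number of connected components of $\mathrm{supp}(F)$ is half the number of such critical points.

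I would next differentiate $z(\xi)$ and clear the common denominator. A direct polynomial identity shows that $dz/d\xi=0$ is equivalent, up to an overall sign, to the vanishing of the quartic $q(\xi)$ displayed in (\ref{eq:quartic1}). This identifies $q$ as the critical-point polynomial of the inverse Stieltjes transform and reduces the theorem to counting real roots of $q$.

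In the admissible parameter range $0<c,\beta<1$ and $a>0$, every coefficient of $q$ is strictly positive (one checks each of $a^{2}(1-c)$, $a^{2}(1-c\beta)$, $a(1-c(1-\beta))$, and $1-c(1-\beta)$ is positive), so by Descartes' rule of signs $q$ has no positive real roots; every real critical point of $z(\xi)$ therefore lies in $\mathbb{R}_{-}$. Moreover, on the open interval $(-1,0)$ (respectively $(-1/a,0)$ when $a>1$) the function $z(\xi)$ is smooth, and a short limit computation using $N(0)=1$ together with $N(-1)=c(1-a)(1-\beta)$ shows that $z(\xi)\to+\infty$ at both endpoints of this interval. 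Hence $z$ attains a local minimum in the interior, furnishing at least one real critical point of $z(\xi)$; since the non-real roots of the real quartic $q$ occur in conjugate pairs, this forces $q$ to have at least two real roots.

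A real quartic with discriminant $\Delta>0$ has either four real roots or four non-real roots, while $\Delta<0$ corresponds to exactly two real roots. The previous paragraph rules out the ``four non-real'' alternative, so $\Delta>0$ is equivalent to $q$ having four real roots, i.e.~to $z(\xi)$ having four real critical points, i.e.~to $\mathrm{supp}(F)$ possessing four real endpoints---exactly the two-interval case. Conversely, if $\mathrm{supp}(F)$ consists of two disjoint intervals then $q$ has four distinct real roots, and $\Delta>0$ follows. The step I expect to be the main obstacle is importing cleanly from \cite{CS} the exact dictionary between the real critical structure of $z(\xi)$ and the endpoints of $\mathrm{supp}(F)$---in particular verifying that each of the negative real critical points produced by the four real roots of $q$ maps to a genuine spectral endpoint in $\mathbb{R}_{+}$ rather than to a spurious stationary value; once this dictionary is in hand, the theorem follows from Descartes' rule of signs combined with the classical discriminant trichotomy for a real quartic.
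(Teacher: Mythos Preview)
Your approach is essentially the same as the paper's: both identify the quartic (\ref{eq:quartic1}) as the critical-point equation of the inverse map $z(\xi)$ and invoke the framework of \cite{CS} (the paper records this as Lemma~\ref{le:CS}) to translate real critical points into spectral endpoints. Your use of Descartes' rule to force all real roots into $(-\infty,0)$ matches the paper's observation that all coefficients of $q$ are positive, and your intermediate-value argument producing at least two real roots is a clean way to exclude the ``four complex roots'' branch of the discriminant trichotomy---something the paper glosses over when it asserts without comment that $\Delta>0$ yields four \emph{real} roots.

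The obstacle you flag at the end is exactly the substance of the paper's argument. Knowing that $q$ has four real roots $\gamma_1<\gamma_2<\gamma_3<\gamma_4$ does not by itself give two components: one must show that the images $\lambda_k=z(\gamma_k)$ are distinct and correctly ordered as $\lambda_1<\lambda_2<\lambda_3<\lambda_4$, rather than, say, interleaving to produce a single interval in the complement formula. The paper establishes this ordering through a chain of lemmas: first Lemma~\ref{le:image} computes the images of the monotonicity intervals of $z(\xi)$; then Lemma~\ref{le:singu} shows that each of the two ``bad'' intervals $[\gamma_1,\gamma_2]$ and $[\gamma_3,\gamma_4]$ must contain one of the poles $-1,-1/a$ (via a counting argument on real preimages); finally Lemma~\ref{le:order} uses this pole placement and the same preimage-counting trick to force $\lambda_1<\lambda_2<\lambda_3<\lambda_4$. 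This is not a formality one can import wholesale from \cite{CS}; it is a genuine structural fact about this particular rational map, and your proposal would need an argument of comparable strength to close.
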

When $\Delta>0$, we also have the following expression for the
density function of $F$ (See Theorem \ref{thm:density}).
\begin{theorem}\label{thm:side1}
Let $\Delta$ be the discriminant of the quartic polynomial
(\ref{eq:quartic1}). Suppose $\Delta>0$ and let
$\gamma_1<\ldots<\gamma_4$ be the 4 real roots to
(\ref{eq:quartic1}). Let $\lambda_1<\ldots<\lambda_4$ be the
following
\begin{equation*}
\lambda_k=-\frac{1}{\gamma_k}+c\frac{1-\beta}{1+\gamma_k}+c\frac{a\beta}{1+a\gamma_k},\quad
k=1,\ldots,4.
\end{equation*}
Then the p.d.f $F$ is supported on
$[\lambda_1,\lambda_2]\cup[\lambda_3,\lambda_4]$ with the following
density $d F(z)=\rho(z)dz$
\begin{equation}\label{eq:rho}
\begin{split}
\rho(z)=\frac{3}{2\pi}\left|\left(\frac{r(z)+\sqrt{-\frac{1}{27a^4z^4}D_3(z)}}{2}\right)^{\frac{1}{3}}-\left(\frac{r(z)-\sqrt{-\frac{1}{27a^4z^4}D_3(z)}}{2}\right)^{\frac{1}{3}}\right|,
\end{split}
\end{equation}
where $D_3(z)$ and $r(z)$ are given by
\begin{equation*}
\begin{split}
D_3(z)&=(1-a)^2\prod_{j=1}^4(z-\lambda_j),\\
r(z)&=\frac{1}{27}\Bigg(-\frac{2B_2^3}{a^3}z^{-3}+\left(\frac{9B_1B_2}{a^2}-\frac{6A_2B_2^2}{a^3}\right)z^{-2}+\left(\frac{9B_2}{a^2}+\frac{9B_1A_2}{a^2}-\frac{27}{a}-\frac{6A_2^2B_2}{a^3}\right)z^{-1}\\
&+\left(\frac{9A_2}{a^2}-\frac{2A_2^3}{a^3}\right)\Bigg).
\end{split}
\end{equation*}
The constants $A_1$, $B_1$ and $B_2$ in the above equation are
defined by
\begin{equation*}
A_2=(1+a),\quad B_2=a(1-c),\quad B_1=1-c(1-\beta)+a(1-c\beta).
\end{equation*}
The cube root in (\ref{eq:density}) is chosen such that
$\sqrt[3]{A}\in\mathbb{R}$ for $A\in\mathbb{R}$ and the square root
is chosen such that $\sqrt{A}>0$ for $A>0$.
\end{theorem}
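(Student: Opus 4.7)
The plan is to apply Cardano's formula to the cubic (\ref{eq:curve20}), identify the resulting discriminant with $D_3(z)$, and extract the density by Stieltjes inversion. Throughout I use that, by the Silverstein--Choi theorem (to be recalled in the section on the Stieltjes transform), the Stieltjes transform $m_F$ of $F$ is read off from the appropriate branch $\xi(z)$ of (\ref{eq:curve20}), so on the support $\rho(z)$ is (up to a prefactor fixed at the end) the imaginary part of $\xi(z+i0)$.

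\emph{Step 1: invert $\xi\mapsto z$ and locate the branch points.} Since (\ref{eq:curve20}) is linear in $z$, I first solve for $z$ as a rational function of $\xi$; a partial fraction decomposition, using the explicit $A_2, B_1, B_2$, yields
\begin{equation*}
z(\xi) \;=\; -\frac{B_2\xi^2+B_1\xi+1}{\xi(\xi+1)(a\xi+1)} \;=\; -\frac{1}{\xi} + \frac{c(1-\beta)}{\xi+1} + \frac{c\beta a}{a\xi+1}.
\end{equation*}
The branch points of the algebraic function $\xi(z)$ in the $z$-plane are the critical values $z(\gamma)$ where $dz/d\xi(\gamma)=0$. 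Clearing denominators in $dz/d\xi=0$ gives precisely the quartic (\ref{eq:quartic1}); hence its real roots $\gamma_1,\ldots,\gamma_4$ map to the branch points $\lambda_k = z(\gamma_k)$.

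\emph{Step 2: depress and apply Cardano.} Dividing (\ref{eq:curve20}) by $za$ and setting $\xi = \eta - (A_2 z + B_2)/(3za)$ yields $\eta^3 + p(z)\eta + q(z) = 0$ with explicit $p,q\in\mathbb{R}(z)$. Cardano produces the three roots $\eta_j = \omega^j u + \omega^{-j} v$, $j=0,1,2$, where $\omega = e^{2\pi i/3}$ and
\begin{equation*}
u^3 = -\frac{q}{2} + \sqrt{\frac{q^2}{4}+\frac{p^3}{27}}, \qquad v^3 = -\frac{q}{2} - \sqrt{\frac{q^2}{4}+\frac{p^3}{27}}, \qquad uv = -\frac{p}{3}.
\end{equation*}
A direct expansion of $-q(z)$ in $z^{-1}$ using the given $A_2,B_1,B_2$ identifies $-q(z)$ with $r(z)$ in the theorem. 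The main algebraic step is to establish the discriminant identity
\begin{equation*}
\frac{q^2}{4}+\frac{p^3}{27} \;=\; -\frac{D_3(z)}{4\cdot 27\, a^4 z^4}.
\end{equation*}
After clearing the common factor $a^4 z^4$, the left side becomes a polynomial in $z$ of degree $4$. By Step 1 it must vanish exactly at $z=\lambda_1,\ldots,\lambda_4$ (the branch points of the algebraic curve), so it is a scalar multiple of $\prod_{k=1}^4(z-\lambda_k)$; computing only the coefficient of $z^4$ by direct expansion shows this scalar is $-(1-a)^2/(108\, a^4)$, which gives $D_3(z)=(1-a)^2\prod_k(z-\lambda_k)$ as claimed.

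\emph{Step 3: extract the density.} On $(\lambda_1,\lambda_2)\cup(\lambda_3,\lambda_4)$ one has $D_3(z)<0$, so $q^2/4+p^3/27>0$ and therefore $u,v$ are real under the convention stated in the theorem. Then $\eta_0=u+v$ is real, while $\eta_1,\eta_2$ are complex conjugates with $\mathrm{Im}(\eta_1)=\frac{\sqrt{3}}{2}(u-v)$. The shift $\xi=\eta-(A_2z+B_2)/(3za)$ is real, so $\mathrm{Im}(\xi(z+i0)) = \mathrm{Im}(\eta_1)$, and Stieltjes inversion applied to the Silverstein--Choi identification of $\xi$ with the Stieltjes transform of $F$ gives the claimed density (\ref{eq:rho}), with the overall prefactor resolved by that identification. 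The principal obstacle is the discriminant identity of Step 2: a brute-force expansion of $q^2/4+p^3/27$ is lengthy, but the conceptual shortcut, that it must vanish at the branch points $\lambda_k$ found in Step 1, reduces the task to matching a single leading coefficient.
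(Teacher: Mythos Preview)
Your approach is essentially the same as the paper's: both reduce the cubic (\ref{eq:curve20}) to depressed form, write down Cardano's three roots, observe that on the support (where $D_3(z)<0$) one root is real and the other two are complex conjugates, and recover $\rho$ by Stieltjes inversion from the imaginary part of the non-real branch. The paper simply states the Cardano expressions (\ref{eq:cubsol}) and the discriminant $D_3$ from (\ref{eq:D3}) without derivation, whereas you supply the extra step of identifying $-q$ with $r(z)$ and deducing the factorisation $D_3(z)=(1-a)^2\prod_k(z-\lambda_k)$ by the branch-point argument plus a leading-coefficient match; this is a clean way to avoid the full expansion, and it is exactly how the paper arrives at the same factorisation (it notes that the zeros of $D_3$ are the branch points and reads off the leading coefficient $(1-a)^2$ from (\ref{eq:D3})). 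One small wording issue: the branch $\xi_1$ that solves (\ref{eq:curve20}) is $m_{\underline F}$, not $m_F$ (they differ by $-(1-c)/z$ and a factor $c$, cf.\ (\ref{eq:stielim})); on the support this only affects the overall constant you defer to ``that identification,'' but you should state it correctly.
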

\begin{remark}\label{re:sqrt}
Since $D_3(z)$ can be written as
\begin{equation}
\frac{D_3(z)}{a^4z^4}=-27\left(r(z)\right)^2-4\left(p(z)\right)^3,
\end{equation}
for some polynomial $p(z)$ in $z^{-1}$. We see that if $r(z)$
vanishes at any of the $\lambda_k$, then $D_3(z)$ will have at least
a double root at these points, hence $r(\lambda_k)\neq0$. From this
and (\ref{eq:rho}), we see that the density $\rho(z)$ vanishes like
a square root at the points $\lambda_k$, $k=1,\ldots,4$.
\begin{equation}\label{eq:rhok}
\rho(z)=\frac{\rho_k}{\pi}|z-\lambda_k|^{\frac{1}{2}}+O\left((z-\lambda_k)\right),\quad
z\rightarrow\lambda_k.
\end{equation}
\end{remark}
Having obtained the global statistics of eigenvalues, we will
continue to answer questions about the local statistics for the
eigenvalues. In particular, we will show that, under suitable
scaling of the eigenvalues, the behavior of the largest eigenvalue
is given by the Tracy-Widom distribution.

A result by Baik, Ben-Arous and P\'ech\'e \cite{Baik95} shows that
the correlation functions of the eigenvalues can be expressed in
terms of a Fredholm determinant with kernel $K_{M,N}(x,y)$. In
\cite{BK1} and \cite{DF}, the authors have expressed this kernel in
terms of multiple orthogonal polynomials (See Section \ref{se:MOP}
for details) and have shown that the $m$-point correlation function
for the Wishart distribution (\ref{eq:wishart}) is given by
\begin{equation}\label{eq:mpoint}
\mathcal{R}_{m}^{(M,N)}(y_1,\ldots,y_m)=\det\left(K_{M,N}(y_j,y_k)\right)_{1\leq
j,k\leq m}
\end{equation}
where $\mathcal{R}_{m}^{(M,N)}(y_1,\ldots,y_m)$ is the $m$-point
correlation function
\begin{equation}\label{eq:corre}
\mathcal{R}_m^{(M,N)}(y_1,\ldots,y_m)=\frac{N!}{(N-m)!}\int_{\mathbb{R}_+}\cdots\int_{\mathbb{R}_+}
\mathcal{P}(y)dy_{m+1}\ldots dy_N.
\end{equation}
Our next main result shows the universality of the correlation
kernel when $M$, $N$ and $N_1\rightarrow\infty$. As a polynomial in
$\xi$, the algebraic equation (\ref{eq:curve20}) admits a unique
solution $\xi_1(z)$ that is analytic in the upper half plane and
vanishes at $z=\infty$. If we let $k_2=2$, $k_3=4$ for $a>1$ and
$k_2=4$, $k_3=2$ for $a<1$, then at the points $\lambda_{k_j-1}$ and
$\lambda_{k_j}$, $j=2,3$, the root $\xi_1(z)$ will coincide with
another root $\xi_j(z)$. Let $\theta_j(z)$ be the following,
\begin{equation}
\begin{split}
\theta_1(z)&=\int_{\lambda_4}^z\xi_1(x)dx,\quad
\theta_2(z)=\int_{\lambda_{k_2}}^z\xi_2(x)dx,\quad
\theta_3(z)=\int_{\lambda_{k_3}}^z\xi_3(x)dx,\\
k_2&=2,\quad k_3=4,\quad a>1,\quad k_2=4,\quad k_3=2,\quad a<1.
\end{split}
\end{equation}
where the integration path is taken in the upper half plane. Then we
can define a rescaled kernel $\hat{K}_{M,N}(x,y)$ in a neighborhood
of $[\lambda_{k_j-1},\lambda_{k_j}]$.
\begin{equation}\label{eq:khat}
\hat{K}_{M,N}(x,y)=\left(xy^{-1}\right)^{\frac{M-N}{2}}e^{\frac{M}{2}\left(\theta_{1,+}(x)+
\theta_{j,+}(x)-\theta_{1,+}(y)-\theta_{j,+}(y)\right)}K_{M,N}(x,y),
\end{equation}
where $\theta_{1,+}(x)$ and $\theta_{j,+}(x)$ are the boundary
values of $\theta_1(x)$ and $\theta_j(x)$ on the positive side of
the real axis.

The rescaling from $K_{M,N}(x,y)$ to $\hat{K}_{M,N}(x,y)$ in
(\ref{eq:khat}) does not affect the determinantal formula
(\ref{eq:mpoint}). That is, we have
\begin{equation}\label{eq:mhat}
\mathcal{R}_{m}^{(M,N)}(y_1,\ldots,y_m)=\det\left(\hat{K}_{M,N}(y_j,y_k)\right)_{1\leq
j,k\leq m}
\end{equation}
We then have the following universality result for the kernel
$\hat{K}_{M,N}(x,y)$.
\begin{theorem}\label{thm:main2}
Suppose $\Delta$ in Theorem \ref{thm:main1} is positive. Let
$\rho(z)$ be the density function of $F$ in Theorem \ref{thm:side1}.
Then for any $x_0\in(\lambda_1,\lambda_2)\cup(\lambda_3,\lambda_4)$
and $u$, $v\in\mathbb{R}$, we have
\begin{equation}\label{eq:bulk}
\lim_{N,M\rightarrow\infty}\frac{1}{M\rho(x_0)}\hat{K}_{M,N}\left(x_0+\frac{u}{M\rho(x_0)},
x_0+\frac{v}{M\rho(x_0)}\right)=\frac{\sin \pi(u-v)}{\pi(u-v)}.
\end{equation}
On the other hand, let $x_0=\lambda_k$, $k=1,\ldots,4$, then for
$u$, $v\in\mathbb{R}$, we have
\begin{equation}\label{eq:edge}
\begin{split}
&\lim_{N,M\rightarrow\infty}\frac{1}{\left(M\rho_k\right)^{\frac{2}{3}}}
\hat{K}_{M,N}\left(x_0+\frac{u}{\left(M\rho_k\right)^{\frac{2}{3}}},
x_0+\frac{v}{\left(M\rho_k\right)^{\frac{2}{3}}}\right)=\\
&\frac{\mathrm{Ai}(u)\mathrm{Ai}^{\prime}(v)
-\mathrm{Ai}^{\prime}(u)\mathrm{Ai}(v)}{u-v},
\end{split}
\end{equation}
where $\mathrm{Ai}(z)$ is the Airy function and $\rho_k$,
$k=1,\ldots,4$ are the constants in (\ref{eq:rhok}).
\end{theorem}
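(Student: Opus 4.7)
The plan is to carry out a Deift-Zhou steepest-descent analysis on the $3\times 3$ Riemann-Hilbert (RH) problem for the multiple orthogonal polynomials that encode $K_{M,N}$ (in the formulation of \cite{BK1,DF}), following the template of \cite{BKext1,Lysov} adapted to the Wishart setting. The three branches $\xi_1,\xi_2,\xi_3$ of the spectral curve (\ref{eq:curve20}) define a three-sheeted covering of $\mathbb{CP}^1$ with four simple branch points $\lambda_1,\ldots,\lambda_4$: on $[\lambda_1,\lambda_2]$ the sheets carrying $\xi_1$ and $\xi_{k_2}$ are exchanged, and on $[\lambda_3,\lambda_4]$ those carrying $\xi_1$ and $\xi_{k_3}$. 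A Riemann-Hurwitz count yields genus zero, so the eventual outer parametrix will be rational on a uniformizing $\mathbb{CP}^1$ and no theta functions are required. The $g$-functions are precisely the $\theta_j$ introduced in the theorem.

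The first transformation $Y\to T$ conjugates by $\diag(e^{M\theta_j})$ to normalize at infinity; the prefactor in (\ref{eq:khat}) is exactly what absorbs this conjugation on the kernel side, which is why the theorem is phrased in terms of $\hat{K}_{M,N}$ rather than $K_{M,N}$. The opening of lenses $T\to S$ uses the variational inequalities $\rpart(\theta_1-\theta_j)>0$ off $\mathrm{supp}\,F$ to factor the oscillatory jumps across each cut into triangular pieces decaying exponentially away from the cut; these inequalities follow from the equilibrium-measure characterization underlying Theorem \ref{thm:side1} and the set-up of \cite{CS}.

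I would then build a global parametrix $N^{(\infty)}$ by choosing a rational uniformizer of the genus-zero cover and writing an explicit $3\times 3$ matrix with the prescribed row-permutation monodromies at the four $\lambda_k$ and the correct behavior at the three preimages of $z=\infty$. At each edge $\lambda_k$, a standard Airy parametrix on a fixed disk matches $N^{(\infty)}$ to leading order, and the square-root vanishing (\ref{eq:rhok}) of $\rho$ at $\lambda_k$ produces the scaling $(M\rho_k)^{2/3}$ in (\ref{eq:edge}). No dedicated bulk parametrix is needed: the sine kernel of (\ref{eq:bulk}) emerges directly from the $(1,j)$-block jump of $N^{(\infty)}$ across the cut through $x_0$. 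The final transformation $S\to R$ then yields a small-norm RH problem with jumps $I+O(M^{-1})$ on the disk boundaries and exponentially small jumps elsewhere, so $R=I+O(M^{-1})$ uniformly.

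The hardest step is the explicit construction of $N^{(\infty)}$: selecting the uniformizer of the three-sheeted genus-zero cover, keeping correct track of which rows are exchanged across which cut under the non-trivial sheet permutations, and matching the residues at the three preimages of infinity so that $N^{(\infty)}$ has the correct behavior at $z=\infty$. A secondary technical point is the rigorous verification of the variational inequalities off the two cuts. Once these are in place, the two limits (\ref{eq:bulk}) and (\ref{eq:edge}) are obtained by substituting the composition of parametrices into the Christoffel-Darboux formula for $\hat{K}_{M,N}$ and letting $M,N\to\infty$ in the stated scalings, with the $O(M^{-1})$ error controlled by $R$.
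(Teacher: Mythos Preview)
Your proposal is correct and follows essentially the same route as the paper: the $3\times 3$ RH problem of \cite{BK1,vanGerKuij}, the sequence $Y\to T\to S\to R$ with lens opening, a rational outer parametrix on the genus-zero three-sheeted cover, Airy parametrices at the four edges, and a small-norm argument for $R$, exactly as in \cite{BKext1,Lysov}.

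One technical point you gloss over: the $g$-functions used in the RH normalization are \emph{not} literally the $\theta_j$ of the theorem but their finite-$N$ analogues $\theta_j^N$, built from the curve (\ref{eq:curve20}) with $c,\beta$ replaced by $c_N=N/M$, $\beta_N=N_1/N$. This is essential because the jump of $T$ on the gaps $(0,\lambda_1^N]$ and $[\lambda_2^N,\lambda_3^N]$ involves factors $e^{\pm 2\pi i M c_N\beta_N}$ and $e^{\pm 2\pi i M c_N(1-\beta_N)}$, which equal $1$ only because $Mc_N\beta_N=N_1$ and $Mc_N(1-\beta_N)=N_0$ are integers; with the limiting $c,\beta$ these would oscillate and the jump on the gaps would not reduce to an upper-triangular matrix close to $I$. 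Consequently the prefactor in (\ref{eq:khat}), which is built from $\theta_j$ rather than $\theta_j^N$, does \emph{not} exactly absorb the conjugation: a residual scalar factor $e^{-M(h_N(x)-h_N(y))}$ with $h_N=\rpart(\theta_{1,+}^N-\theta_{1,+})$ remains, and one must check separately (using $c-c_N,\beta-\beta_N=O(M^{-1})$ and, at the edges, $\lambda_k-\lambda_k^N=O(M^{-1})$) that this factor tends to $1$ under the bulk and edge scalings. The paper carries this out explicitly; once you account for it, your outline matches the paper's proof.
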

Recall that the Airy function is the unique solution to the
differential equation $v^{\prime\prime}=zv$ that has the following
asymptotic behavior as $z\rightarrow\infty$ in the sector
$-\pi+\epsilon\leq \arg(z)\leq \pi-\epsilon$, for any $\epsilon>0$.
\begin{equation}\label{eq:asymairy}
\mathrm{Ai}(z)=\frac{1}{2\sqrt{\pi}z^{\frac{1}{4}}}e^{-\frac{2}{3}z^{\frac{3}{2}}}\left(1+O(z^{-\frac{3}{2}})\right)
,\quad -\pi+\epsilon\leq \arg(z)\leq \pi-\epsilon,\quad
z\rightarrow\infty.
\end{equation}
where the branch cut of $z^\frac{3}{2}$ in the above is chosen to be
the negative real axis.

Since the limiting kernel $\hat{K}_{M,N}(x,y)$ takes the form of the
Airy kernel (\ref{eq:edge}), by a well-known result of Tracy and
Widom \cite{TW1}, we have the following
\begin{theorem}\label{thm:TW}
Let $y_1$ be the largest eigenvalue of $B_N$, then we have
\begin{equation}
\lim_{M,N\rightarrow\infty}
\mathbb{P}\left(\left(y_1-\lambda_4\right)\left(M\rho_4\right)^{\frac{2}{3}}<s\right)
=TW(s),
\end{equation}
where $TW(s)$ is the Tracy-Widom distribution
\begin{equation*}
TW(s)=\exp\left(-\int_{s}^{\infty}(t-s)q^2(t)dt\right),
\end{equation*}
and $q(s)$ is the solution of Painlev\'e II equation
\begin{equation*}
q^{\prime\prime}(s)=sq(s)+2q^3(s),
\end{equation*}
with the following asymptotic behavior as $s\rightarrow\infty$.
\begin{equation*}
q(s)\sim-\mathrm{Ai}(s),\quad s\rightarrow +\infty.
\end{equation*}
\end{theorem}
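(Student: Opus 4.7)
The plan is to deduce Theorem \ref{thm:TW} from the edge convergence (\ref{eq:edge}) of Theorem \ref{thm:main2}, combined with the classical Tracy--Widom representation of $TW(s)$ as a Fredholm determinant of the Airy kernel. The starting point is the identity $\mathbb{P}(y_1\leq t)=\det(I-K_{M,N})_{L^2(t,\infty)}$, which follows from the determinantal formula (\ref{eq:mpoint}) via the standard inclusion--exclusion expansion of the gap probability. Since the conjugation (\ref{eq:khat}) producing $\hat K_{M,N}$ amounts to multiplying the kernel by $f(x)/f(y)$ for a nonvanishing function $f$, it leaves any Fredholm determinant invariant, so equivalently $\mathbb{P}(y_1\leq t)=\det(I-\hat K_{M,N})_{L^2(t,\infty)}$.

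Setting $t=\lambda_4+s(M\rho_4)^{-2/3}$ and performing the matching change of variables $x=\lambda_4+u(M\rho_4)^{-2/3}$ in each term of the Fredholm series, the Jacobians combine with the prefactor $(M\rho_4)^{-2/3}$ on the left-hand side of (\ref{eq:edge}) to produce an operator on $L^2(s,\infty)$ whose integral kernel is precisely the rescaled expression appearing there. By Theorem \ref{thm:main2}, this kernel converges pointwise on every compact subset of $(s,\infty)\times(s,\infty)$ to the Airy kernel on the right-hand side of (\ref{eq:edge}).

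The principal obstacle will be upgrading this pointwise convergence to convergence of the Fredholm determinants themselves, which requires uniform tail bounds of the form
\begin{equation*}
(M\rho_4)^{-2/3}\,\hat K_{M,N}\left(\lambda_4+\frac{u}{(M\rho_4)^{2/3}},\lambda_4+\frac{u}{(M\rho_4)^{2/3}}\right)\leq C e^{-c u^{3/2}},\qquad u\to+\infty,
\end{equation*}
with constants $C,c>0$ independent of $M,N$. Such bounds are a natural byproduct of the Riemann--Hilbert steepest descent analysis that underlies (\ref{eq:edge}): outside any fixed neighborhood of $\lambda_4$ the outer parametrix dominates, producing the required exponential decay in the Airy variable. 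I would therefore extend the steepest descent estimates used to prove (\ref{eq:edge}) to the regime $u=u(M)\to+\infty$ at a polynomial rate, obtaining trace-norm convergence of $\hat K_{M,N}$ to the Airy kernel on $L^2(s,\infty)$. Standard results on Fredholm determinants then yield convergence of the determinants, and the limit $\det(I-K_{\mathrm{Ai}})_{L^2(s,\infty)}$ equals $TW(s)$ by the identity of Tracy and Widom \cite{TW1}, completing the proof.
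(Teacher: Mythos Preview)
Your proposal is correct and follows exactly the route the paper takes: the paper simply states that Theorem~\ref{thm:TW} follows from the Airy-kernel limit (\ref{eq:edge}) ``by a well-known result of Tracy and Widom \cite{TW1}'', without any further argument. In fact, you are considerably more careful than the paper, since you correctly flag the step the paper glosses over entirely --- upgrading pointwise convergence of the rescaled kernel to trace-class convergence on $L^2(s,\infty)$ via uniform exponential tail bounds extracted from the steepest-descent analysis --- which is precisely what is needed to pass to the limit in the Fredholm determinant.
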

\begin{remark} The results obtained in this paper are derived from the asymptotics of
multiple Laguerre polynomials (See Section \ref{se:MOP} for
details). In \cite{Lysov}, strong asymptotics of the multiple
Laguerre polynomials was also obtained for the case when
$1-c=O(M^{-1})$ and $\beta=\frac{1}{2}$. This corresponds to the
case when $M-N$ is finite. In \cite{Lysov}, it was shown that, in
that case, the origin will become an edge of the spectrum and the
asymptotics of the multiple Laguerre polynomials are described by
the Bessel functions near the origin. The statistical implication of
\cite{Lysov} is that, when $M-N$ is finite, the correlation kernel
near the origin will be given by the Bessel kernel (See, e.g.
\cite{TW2}) instead of the Airy kernel (\ref{eq:edge}). This is an
analogue to the situation when $\Sigma_N=I$ and $M-N$ is finite,
where the correlation kernel at the origin is also described by the
Bessel kernel (See \cite{DF} and \cite{V}).
\end{remark}

Until recently, most of the universality results for the Wishart
distribution was obtained when the covariance matrix $\Sigma_N$ is
the identity matrix \cite{El03}, \cite{Fo93}, \cite{J} and
\cite{Jo}. More recent studies have extended these results to the
spiked model proposed by Johnstone \cite{J}, in which $\Sigma_N$ is
a finite perturbation of the identity matrix \cite{BaikD},
\cite{Baik95}, \cite{Baikspike}, \cite{DF}, \cite{W1}, \cite{W2}.
This is the first few cases when universality results was obtained
for a covariance matrix that is not a finite perturbation of the
identity matrix. (See also \cite{El} in which a different class of
$\Sigma_N$ was studied) For theoretical reasons, the model studied
in this paper is crucial in understanding of the phase transition
behavior that occurs in Wishart ensembles. (See \cite{Baik95}). When
the 2 intervals in the support of $F$ closes up, a phase transition
takes place and the correlation kernel will demonstrate a different
behavior at the point where the support closes up. With the
Riemann-Hilbert technique used in this paper, such behavior can be
studied rigorously as in \cite{BKdou} (See also \cite{Lysov}). We
plan to study this phenomenon in a further publication. For
practical reasons, many covariance matrices appearing in fields of
science are not finite perturbations of the identity matrix. In
fact, covariance matrices that have groups of distinct eigenvalues
are accepted models in various areas such as the correlation of
genes in microarray analysis and the correlation of the returns of
stocks in finance.

\subsection*{Acknowledgement}
The author acknowledges A. Kuijlaars for pointing out reference
\cite{Lysov} to me and EPSRC for the financial support provided by
the grant EP/D505534/1.

\section{Multiple Laguerre polynomials and the correlation
kernel}\label{se:MOP}

The main tool in our analysis involves the use of multiple
orthogonal polynomials and the Riemann-Hilbert problem associated
with them. In this section we shall recall the results in \cite{BK1}
and \cite{DF} and express the correlation kernel $K_{M,N}(x,y)$ in
(\ref{eq:mpoint}) in terms of the multiple Laguerre polynomials. In
Section \ref{se:RHP}, we will apply Riemann-Hilbert analysis to
obtain the asymptotics of these multiple Laguerre polynomials and
use them to prove Theorem \ref{thm:main2}.

We shall not define the multiple Laguerre polynomials in the most
general setting, but only define the ones that are relevant to our
set up. Readers who are interested in the theory of multiple
orthogonal polynomials can consult the papers \cite{Ap1},
\cite{Ap2}, \cite{BK1}, \cite{vanGerKuij}. Let $L_{n_1,n_2}(x)$ be
the monic polynomial such that
\begin{equation}\label{eq:multiop}
\begin{split}
&\int_{0}^{\infty}L_{n_1,n_2}(x)x^{i+M-N}e^{-Mx}dx=0,\quad i=0,\ldots,n_1-1,\\
&\int_0^{\infty}L_{n_1,n_2}(x)x^{i+M-N}e^{-Ma^{-1}x}dx=0,\quad
i=0,\ldots, n_2-1.
\end{split}
\end{equation}
and let $Q_{n_1,n_2}(x)$ be a function of the form
\begin{equation}\label{eq:qform}
Q_{n_1,n_2}(x)=A^1_{n_1,n_2}(x)e^{-Mx}+A^a_{n_1,n_2}(x)e^{-Ma^{-1}x},
\end{equation}
where $A^1_{n_1,n_2}(x)$ and $A^a_{n_1,n_2}(x)$ are polynomials of
degrees $n_1-1$ and $n_2-1$ respectively, and that $Q_{n_1,n_2}(x)$
satisfies the following
\begin{equation}\label{eq:qorth}
\int_0^{\infty}x^iQ_{n_1,n_2}(x)x^{M-N}dx=\left\{
                                            \begin{array}{ll}
                                              0, & \hbox{$i=0,\ldots,n_1+n_2-2$;} \\
                                              1, & \hbox{$i=n_1+n_2-1$.}
                                            \end{array}
                                          \right.
\end{equation}
The polynomial $L_{n_1,n_2}(x)$ is called the multiple Laguerre
polynomial of type II and the polynomials $A^1_{n_1,n_2}(x)$ and
$A^a_{n_1,n_2}(x)$ are called multiple Laguerre polynomials of type
I (with respect to the weights $x^{M-N}e^{-Mx}$ and
$x^{M-N}e^{-Ma^{-1}x}$) \cite{Ap1}, \cite{Ap2}. These polynomials
exist and are unique. Moreover, they admit integral expressions
\cite{BK1}.

Let us define the constants $h^{(1)}_{n_1,n_2}$ and
$h^{(2)}_{n_1,n_2}$ to be
\begin{equation}\label{eq:normalize}
\begin{split}
h^{(1)}_{n_1,n_2}&=\int_{0}^{\infty}L_{n_1,n_2}(x)x^{n_1+M-N}e^{-Mx}dx,\\
h^{(2)}_{n_1,n_2}&=\int_{0}^{\infty}L_{n_1,n_2}(x)x^{n_2+M-N}e^{-Ma^{-1}x}dx.
\end{split}
\end{equation}

Then the following result in \cite{BK1} and \cite{DF} allows us to
express the correlation kernel in (\ref{eq:mpoint}) in terms of a
finite sum of the multiple Laguerre polynomials.
\begin{proposition}\label{pro:CD}
The correlation kernel in $K_{M,N}(x,y)$ (\ref{eq:mpoint}) can be
expressed in terms of multiple Laguerre polynomials as follows
\begin{equation}\label{eq:ker}
\begin{split}
\left(xy\right)^{\frac{N-M}{2}}(x-y)K_{M,N}(x,y)&=L_{N_0,N_1}(x)Q_{N_0,N_1}(x)\\
&-\frac{h^{(1)}_{N_0,N_1}}{h^{(1)}_{N_0-1,N_1}}L_{N_0-1,N_1}(x)Q_{N_0+1,N_1}(x)\\
&-\frac{h^{(2)}_{N_0,N_1}}{h^{(2)}_{N_0,N_1-1}}L_{N_0,N_1-1}(x)Q_{N_0,N_1+1}(x)
\end{split}
\end{equation}
where $N_0=N-N_1$.
\end{proposition}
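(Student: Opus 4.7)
My plan is to derive (\ref{eq:ker}) by recognising (\ref{eq:jpdf1}) as a biorthogonal ensemble and then applying a Christoffel--Darboux identity for multiple Laguerre polynomials. First I would rewrite $\mathcal{P}(y)$ as a product of two determinants. Using $\prod_{i<j}(y_i-y_j) = \det[y_k^{j-1}]_{j,k}$ up to sign, and distributing $\prod_j y_j^{M-N}$ into the second determinant, (\ref{eq:jpdf1}) takes the form
\begin{equation*}
\mathcal{P}(y) \;=\; \frac{1}{Z'_{M,N}}\,\det\bigl[y_k^{j-1}\bigr]_{j,k=1}^N\,\det\bigl[\phi_j(y_k)\bigr]_{j,k=1}^N,
\end{equation*}
with
\begin{equation*}
\phi_j(x) = \begin{cases} x^{j-1+M-N}\,e^{-Mx}, & 1 \le j \le N_0,\\ x^{j-N_0-1+M-N}\,e^{-Ma^{-1}x}, & N_0 < j \le N.\end{cases}
\end{equation*}
This exhibits $\mathcal{P}$ as a biorthogonal ensemble with weights $w_1(x)=x^{M-N}e^{-Mx}$ and $w_2(x)=x^{M-N}e^{-Ma^{-1}x}$ of respective multiplicities $N_0$ and $N_1$; by the standard theory of such ensembles the correlation kernel takes the form $K_{M,N}(x,y) = \sum_{j=1}^N \Psi_j(x)\Phi_j(y)$ for any pair of bases $(\Psi_j,\Phi_j)$ biorthogonal with respect to Lebesgue measure on $\mathbb{R}_+$ and related to the rows/columns above by invertible linear operations.

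The natural choice is to take $\{\Psi_j\}$ to be a family of monic type II multiple Laguerre polynomials $L_{n_1,n_2}$ along a staircase path from $(0,0)$ to $(N_0,N_1)$ in the index lattice: these are produced from the Vandermonde factor by staircase row operations that preserve its determinant, while the orthogonality conditions (\ref{eq:multiop}) guarantee orthogonality to the required subspaces. The dual basis $\{\Phi_j\}$, obtained by the corresponding column operations on $\det[\phi_j(y_k)]$, then consists of the type I functions $Q_{n_1,n_2}$ defined in (\ref{eq:qform})--(\ref{eq:qorth}) along the same staircase. Step-by-step normalisations along the path produce precisely the constants $h^{(1)}_{n_1,n_2}$, $h^{(2)}_{n_1,n_2}$ of (\ref{eq:normalize}), and after absorbing $(xy)^{(M-N)/2}$ one obtains a sum representation of $(xy)^{(N-M)/2}K_{M,N}(x,y)$ of length $N_0+N_1$.

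The essential step compressing this long sum into the three--term right--hand side of (\ref{eq:ker}) is a Christoffel--Darboux identity for multiple orthogonal polynomials. One multiplies the proposed right--hand side by $(x-y)^{-1}$ and invokes the nearest--neighbour recurrence relations expressing $xL_{n_1,n_2}(x)$ as a linear combination of $L_{n_1\pm 1,n_2}$ and $L_{n_1,n_2\pm 1}$, with coefficients given by ratios of $h$--constants (and analogously for $Q_{n_1,n_2}$). Summing these relations along the staircase causes the interior contributions to telescope, leaving exactly the boundary terms in (\ref{eq:ker}) with the coefficients $h^{(1)}_{N_0,N_1}/h^{(1)}_{N_0-1,N_1}$ and $h^{(2)}_{N_0,N_1}/h^{(2)}_{N_0,N_1-1}$. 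I expect this telescoping to be the main obstacle: one must carefully manage sign conventions and the many recurrence coefficients, and verify that the final expression is independent of which staircase path was chosen, a consistency guaranteed by unique solvability of the multiple orthogonality conditions. This Christoffel--Darboux identity for multiple orthogonal polynomials is exactly what is established in \cite{BK1} and \cite{DF}.
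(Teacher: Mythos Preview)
The paper does not actually prove this proposition: it is stated as a result imported from \cite{BK1} and \cite{DF}, with no argument given in the text. Your outline---recasting (\ref{eq:jpdf1}) as a biorthogonal ensemble, choosing type~II/type~I multiple Laguerre bases along a staircase path, and then collapsing the resulting sum via the Christoffel--Darboux identity for multiple orthogonal polynomials---is precisely the strategy carried out in those references, and you correctly identify that the telescoping Christoffel--Darboux step is the substantive content. So your proposal is correct and, by construction, matches what the paper invokes; there is nothing further to compare.
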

This result allows us to compute the limiting kernel once we obtain
the asymptotics for the multiple Laguerre polynomials. In this
paper, we will use the Riemann-Hilbert method to obtain such
asymptotics and use them to compute the limiting kernel and to prove
Theorem 2. The Riemann-Hilbert analysis used in this paper involves
a $3\times 3$ Riemann-Hilbert problem and the analysis is similar to
that in \cite{BKext1} where random matrices with external source was
studied (See also \cite{Lysov}). In what follows, we will restrict
ourselves to the case when the limiting eigenvalue distribution is
supported on 2 disjoint intervals. The case when the limiting
distribution is supported on 1 interval will be considered in a
separate publication.

\section{Stieltjes transform of the eigenvalue
distribution}\label{se:Stie}

In order to study the asymptotics of the correlation kernel, we
would need to know the asymptotic eigenvalue distribution of the
Wishart ensemble (\ref{eq:wishart}). Let $F_N(x)$ be the empirical
distribution function (e.d.f) of the eigenvalues of $B_N$
(\ref{eq:BN}). The asymptotic properties of $F_N(x)$ can be found by
studying its Stieltjes transform.

The Stieltjes transform of a probability distribution function
(p.d.f) $G(x)$ is defined by
\begin{equation}\label{eq:stie}
m_G(z)=\int_{-\infty}^{\infty}\frac{1}{\lambda-z}dG(x), \quad
z\in\mathbb{C}^+=\left\{z\in\mathbb{C}:\mathrm{Im}(z)>0\right\}.
\end{equation}
Given the Stieljes transform, the p.d.f can be found by the
inversion formula
\begin{equation}\label{eq:inver}
G([a,b])=\frac{1}{\pi}\lim_{\epsilon\rightarrow 0^+}\int_{a}^bIm
\left(m_G(s+i\epsilon)\right)ds.
\end{equation}
The properties of the Stieltjes transform of $F_N(x)$ has been
studied by Bai, Silverstein and Choi in the papers \cite{CS},
\cite{BS95}, \cite{BS98}, \cite{BS99}, \cite{S95}. We will now
summarize the results that we need from these papers.

First let us denote the e.d.f of the eigenvalues of $\Sigma_N$ by
$H_N$, that is, we have
\begin{equation*}
dH_N(x)=\frac{1}{N}\sum_{j=1}^N\delta_{a_j}.
\end{equation*}
Furthermore, we assume that as $N\rightarrow\infty$, the
distribution $H_N$ weakly converges to a distribution function $H$.
Then as $N\rightarrow\infty$, the e.d.f $F_N(x)$ converges weakly to
a nonrandom p.d.f $F$, and that the Stieltjes transform $m_F$ of
$F(x)$ satisfies the following equation \cite{CS}, \cite{S95}
\begin{equation}\label{eq:algeq}
m_F(z)=\int_{\mathbb{R}}\frac{1}{t(1-c-czm_F)-z}dH(t).
\end{equation}
Let us now consider the closely related matrix $\underline{B}_N$
\begin{equation}\label{eq:BNline}
\underline{B}_N=\frac{1}{N}X\Sigma_NX^{\dag}.
\end{equation}
The matrix $\underline{B}_N$ has the same eigenvalues as $B_N$
together with an addition $M-N$ zero eigenvalues. Therefore the
e.d.f $\underline{F}_N$ of the eigenvalues of $\underline{B}_N$ are
related to $F_N$ by
\begin{equation}\label{eq:FNline}
\underline{F}_N=(1-c_N)I_{[0,\infty)}+c_NF_N,\quad c_N=\frac{N}{M}.
\end{equation}
where $I_{[0,\infty)}$ is the step function that is $0$ on
$\mathbb{R}_-$ and $1$ on $\mathbb{R}_+$. In particular, as
$N\rightarrow\infty$, the distribution $\underline{F}_N$ converges
weakly to a p.d.f $\underline{F}$ that is related to $F$ by
\begin{equation}\label{eq:Fline}
\underline{F}=(1-c)I_{[0,\infty)}+cF
\end{equation}
and their Stieltjes transforms are related by
\begin{equation}\label{eq:stielim}
m_{\underline{F}}(z)=-\frac{1-c}{z}+cm_F(z).
\end{equation}
Then from (\ref{eq:algeq}), we see that the Stieltjes transform
$m_{\underline{F}}(z)$ satisfies the following equation
\begin{equation}\label{eq:meq}
m_{\underline{F}}(z)=-\left(z-c\int_{\mathbb{R}}\frac{tdH(t)}{1+tm_{\underline{F}}}\right)^{-1}.
\end{equation}
This equation has an inverse \cite{BS98}, \cite{BS99}
\begin{equation}\label{eq:zeq}
z(m_{\underline{F}})=-\frac{1}{m_{\underline{F}}}+c\int_{\mathbb{R}}\frac{t}{1+tm_{\underline{F}}}dH(t).
\end{equation}

\subsection{Riemann surface and the Stieltjes transform}
We will now restrict ourselves to the case when the matrix
$\Sigma_N$ has 2 distinct eigenvalues only. Without lost of
generality, we will assume that one of these values is 1 and the
other one is $a>0$. Let $0<\beta<1$, we will assume that as
$N\rightarrow\infty$, $N_1$ of the eigenvalues take the value $a$
and $N_0=N-N_1$ of the eigenvalues are 1 and that
$\frac{N_1}{N}\rightarrow\beta$. That is, as $N\rightarrow\infty$,
the e.d.f $H_N(x)$ converges to the following
\begin{equation}\label{eq:limedf}
dH_N(x)\rightarrow dH(x)=(1-\beta)\delta_1+\beta\delta_{a}.
\end{equation}
By substituting this back into the (\ref{eq:zeq}), we see that the
Stieltjes transform $\xi(z)=m_{\underline{F}}(z)$ is a solution to
the following algebraic equation
\begin{equation}\label{eq:curve1}
z=-\frac{1}{\xi}+c\frac{1-\beta}{1+\xi}+c\frac{a\beta}{1+a\xi}.
\end{equation}
Rearranging the terms, we see that $\xi=m_{\underline{F}}$ solves
the following
\begin{equation}\label{eq:curve2}
\begin{split}
za\xi^3&+(A_2z+B_2)\xi^2+(z+B_1)\xi+1=0,\\
A_2&=(1+a),\quad B_2=a(1-c),\\
B_1&=1-c(1-\beta)+a(1-c\beta).
\end{split}
\end{equation}
This defines a Riemann surface $\Lie$ as a 3-folded cover of the
complex plane.

By solving the cubic equation (\ref{eq:curve2}) or by analyzing the
asymptotic behavior of the equation as $z\rightarrow\infty$, we see
that these solutions have the following behavior as
$z\rightarrow\infty$.
\begin{equation}\label{eq:xiinfty}
\begin{split}
\xi_1(z)&=-\frac{1}{z}+O(z^{-2}), \quad z\rightarrow\infty,\\
\xi_2(z)&=-1+\frac{c(1-\beta)}{z}+O(z^{-2}), \quad z\rightarrow\infty,\\
\xi_3(z)&=-\frac{1}{a}+\frac{c\beta}{z}+O(z^{-2}), \quad
z\rightarrow\infty.
\end{split}
\end{equation}
On the other hand, as $z\rightarrow 0$, the 3 branches of $\xi(z)$
behave as follows
\begin{equation}\label{eq:zeroasym}
\begin{split}
\xi_{\alpha}(z)&=-\frac{1-c}{z}+O(1),\quad z\rightarrow 0,\\
\xi_{\beta}(z)&=R_1+O(z),\quad z\rightarrow 0,\\
\xi_{\gamma}(z)&=R_2+O(z),\quad z\rightarrow 0.
\end{split}
\end{equation}
where the order of the indices $\alpha$, $\beta$ and $\gamma$ does
not necessarily coincides with the ones in (\ref{eq:xiinfty}) (i.e.
we do not necessarily have $\alpha=1$, $\beta=2$ and $\gamma=3$).
The constants $R_1$ and $R_2$ are the two roots of the quadratic
equation
\begin{equation}\label{eq:quad}
a(1-c)x^2+(1-c(1-\beta)+a(1-c\beta))x+1=0.
\end{equation}
The discriminant $D_2$ of (\ref{eq:quad}) is given by
\begin{equation}\label{eq:D2}
D_2=\left(1-c(1-\beta)+a(1-c\beta)\right)^2-4a(1-c),
\end{equation}
for $a<1$, it is an strictly increasing function in $\beta$ and
hence
\begin{equation*}
D_2>\left(1-c+a\right)^2-4a(1-c)=(1-c-a)^2\geq 0,
\end{equation*}
and for $a>1$, it is a strictly decreasing function in $\beta$ and
hence
\begin{equation*}
D_2>\left(1+a(1-c)\right)^2-4a(1-c)=(1-a(1-c))^2\geq 0.
\end{equation*}
Therefore both $R_1$ and $R_2$ are real as they should be.

The functions $\xi_j(z)$ will not be analytic at the branch points
of $\Lie$ and they will be discontinuous across the branch cuts
joining these branch points. Moreover, from (\ref{eq:zeroasym}), one
of the functions $\xi_j(z)$ will have a simple pole at $z=0$. Apart
from these singularities, however, the functions $\xi_j(z)$ are
analytic.

\subsection{Sheet structure of the Riemann surface}
In this section we will study the sheet structure of the Riemann
surface $\Lie$. As we shall see, the branch $\xi_1(z)$ turns out to
be the Stieljes transform $m_{\underline{F}}(z)$ and its branch cut
will become the support of $\underline{F}$.

\subsubsection{The support of eigenvalues}

The branch points of the Riemann surface $\Lie$ are the points on
$\Lie$ in which $\frac{d z}{d\xi}$ vanishes. They are also potential
end points of $\mathrm{supp}(\underline{F})$ due to the following
result in \cite{CS}.
\begin{lemma}\label{le:CS}$\cite{CS}$ If $z\notin\mathrm{supp}(\underline{F})$,
then $m=m_{\underline{F}}(z)$ satisfies the following.
\begin{enumerate}
\item $m\in\mathbb{R}\setminus\{0\}$;
\item $-\frac{1}{m}\notin\mathrm{supp}(H)$;
\item Let $z$ be defined by (\ref{eq:zeq}), then $z^{\prime}(m)>0$, where the prime denotes
the derivative with respect to $m_{\underline{F}}$ in
(\ref{eq:zeq}).
\end{enumerate}
Conversely, if $m$ satisfies 1-3, then
$z=z(m)\notin\mathrm{supp}(\underline{F})$.
\end{lemma}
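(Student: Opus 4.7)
The plan is to combine the integral definition of $m_{\underline{F}}(z)$ with the functional relation (\ref{eq:zeq}), using the inverse function theorem to transfer information between the $z$-plane and the $m$-plane; I would handle the two implications separately. For the forward direction, assume $z\in\mathbb{R}\setminus\mathrm{supp}(\underline{F})$. Since $(\lambda-z)^{-1}$ is real and bounded for $\lambda$ on the support, $m=m_{\underline{F}}(z)$ is real and finite; differentiation under the integral sign gives $m'(z)=\int(\lambda-z)^{-2}d\underline{F}(\lambda)>0$, so by the inverse function theorem $z'(m)=1/m'(z)>0$, which is condition 3. For condition 1, note that $m=0$ would force $z(m)=\infty$ via the $-1/m$ term in (\ref{eq:zeq}), contradicting the finiteness of $z$. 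For condition 2, if $-1/m$ lay in $\mathrm{supp}(H)=\{1,a\}$ then $1+tm$ would vanish at a point of positive $H$-mass, and (\ref{eq:meq}) could not hold at a finite real $z$.

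For the converse, suppose $m\in\mathbb{R}\setminus\{0\}$ satisfies conditions 1--3, and set $z=z(m)$ via (\ref{eq:zeq}). Condition 2 guarantees that $z(\cdot)$ is real-analytic in a neighborhood of $m$, condition 3 makes it a local diffeomorphism there, and the inverse function theorem produces a real-analytic local inverse $\widetilde m(w)$ on a real neighborhood $U$ of $z$ satisfying (\ref{eq:meq}). I would then identify $\widetilde m$ with the analytic continuation of $m_{\underline{F}}$ across $U$ by appealing to the uniqueness of the branch of the cubic (\ref{eq:curve2}) singled out by the Stieltjes asymptotic $\xi_1(w)=-1/w+O(w^{-2})$ at infinity: along a path from $\mathbb{C}^+$ down to $U$ passing through a region where $m_{\underline{F}}$ continues analytically, $\widetilde m$ tracks $\xi_1$ by continuity. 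With $m_{\underline{F}}$ thus extending analytically across $z$, the Stieltjes inversion formula (\ref{eq:inver}) forces both the density and any atomic part of $\underline{F}$ to vanish near $z$, so $z\notin\mathrm{supp}(\underline{F})$.

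The hardest step is the branch-matching in the converse direction: at a real point off the support, the cubic (\ref{eq:curve2}) can have up to three real roots in $\xi$, and one must verify that conditions 1--3 pick out precisely the Stieltjes-transform branch $\xi_1$ rather than the spurious roots $\xi_2,\xi_3$, whose large-$w$ behavior in (\ref{eq:xiinfty}) shows they cannot themselves represent $m_{\underline{F}}(w)$. The positivity condition 3 should be the distinguishing sieve, since on the wrong branches $z'(\xi)$ typically fails to be positive at the relevant values; however, proving this in all spectral configurations (single-cut versus two-cut, $a<1$ versus $a>1$) is essentially the substantive content of the lemma as established in \cite{CS}.
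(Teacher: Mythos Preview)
The paper does not prove this lemma: it is stated with the citation $\cite{CS}$ and used as a black box, so there is no ``paper's own proof'' to compare against. Your proposal therefore goes beyond what the paper attempts.

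On the substance of your sketch: the forward direction is essentially correct and is the easy half. Writing $m'(z)=\int(\lambda-z)^{-2}\,d\underline{F}(\lambda)>0$ for real $z$ off the support and inverting is the standard argument, and your checks of conditions~1 and~2 via (\ref{eq:zeq}) and (\ref{eq:meq}) are fine.

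For the converse you have correctly located the real difficulty, but the argument you outline has a gap. Constructing a local real-analytic inverse $\widetilde m$ of $z(\cdot)$ near $m$ is straightforward; the issue is precisely the identification $\widetilde m=m_{\underline{F}}$ on a neighborhood of $z$. Your plan is to analytically continue $m_{\underline{F}}$ from $\mathbb{C}^+$ down to a real neighborhood $U$ of $z$ and match it with $\widetilde m$ there, but this presupposes that $m_{\underline{F}}$ \emph{does} continue across $U$, i.e.\ that $z\notin\mathrm{supp}(\underline{F})$---which is exactly what you are trying to prove. The appeal to ``a path from $\mathbb{C}^+$ down to $U$ passing through a region where $m_{\underline{F}}$ continues analytically'' is circular as stated. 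The actual argument in \cite{CS} avoids this by working in the $m$-variable rather than the $z$-variable: one shows that any real $m$ with $z'(m)>0$ lies on a maximal open interval on which $z'>0$, and that the image of this interval under $z$ is disjoint from the support by analyzing how the imaginary part of the unique upper-half-plane solution degenerates as one approaches the real axis. You acknowledge at the end that this branch-selection step is ``essentially the substantive content'' of \cite{CS}, which is honest, but it means your proposal is a sketch of the easy direction together with a statement of what remains, not a proof.
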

This lemma allows us to identify the complement of
$\mathrm{supp}(\underline{F})$ by studying the real points $m$ such
that $z^{\prime}(m)>0$.

Let us differentiate (\ref{eq:curve1}) to obtain an expression of
$\frac{d z}{d\xi}$ in terms of $\xi$.
\begin{equation}\label{eq:zprime}
\begin{split}
\frac{d
z}{d\xi}&=\frac{1}{\xi^2(1+\xi)^2(1+a\xi)^2}\Bigg(a^2(1-c)\xi^4
+2(a^2(1-c\beta)+a(1-c(1-\beta))\xi^3\\
&+(1-c(1-\beta)+a^2(1-c\beta)+4a)\xi^2 +2(1+a)\xi+1\Bigg).
\end{split}
\end{equation}
In particular, the values of $\xi$ at $\frac{d z}{d\xi}=0$
correspond to the roots of the quartic equation
\begin{equation}\label{eq:quart}
\begin{split}
a^2(1-c)\xi^4
&+2(a^2(1-c\beta)+a(1-c(1-\beta))\xi^3\\
&+(1-c(1-\beta)+a^2(1-c\beta)+4a)\xi^2 +2(1+a)\xi+1=0
\end{split}
\end{equation}
Let $\Delta$ be the discriminant of this quartic polynomial, then
when $\Delta>0$, the equation (\ref{eq:quart}) will have 4 distinct
real roots $\gamma_1<\ldots<\gamma_4$. Since $0<c,\beta<1$, the
coefficients of (\ref{eq:quart}) are all positive and hence all
$\gamma_k$ are negative.

Let $\lambda_k$ be the corresponding points in the $z$-plane
\begin{equation}\label{eq:lambda}
\lambda_k=-\frac{1}{\gamma_k}+c\frac{1-\beta}{1+\gamma_k}+c\frac{a\beta}{1+a\gamma_k},\quad
k=1,\ldots,4.
\end{equation}
Note that, from the behavior of $z(\xi)$ in (\ref{eq:curve1}), we
see that near the points $-1$ and $-\frac{1}{a}$, the function
$z(\xi)$ behaves as
\begin{equation}\label{eq:zsing}
\begin{split}
z(\xi)&=\frac{c(1-\beta)}{1+\xi}+O(1),\quad \xi\rightarrow -1,\\
z(\xi)&=\frac{ca\beta}{1+a\xi}+O(1),\quad \xi\rightarrow
-\frac{1}{a}.
\end{split}
\end{equation}
and hence $z^{\prime}(\xi)$ is negative near these points. From this
and (\ref{eq:zprime}), we see that $z^{\prime}(\xi)>0$ on the
intervals $(-\infty,\gamma_1]$, $[\gamma_2,\gamma_3]$,
$[\gamma_4,0)$ and $(0,\infty)$ and none of the points $-1$ or
$-\frac{1}{a}$ belongs to these intervals. On $[\gamma_1,\gamma_2]$
and $[\gamma_3,\gamma_4]$, $z^{\prime}(\xi)$ is negative.

The images of these intervals under the map (\ref{eq:curve1}) then
give us the complement of $\mathrm{supp}(\underline{F})$ in the
$z$-plane. Let us study these images
\begin{lemma}\label{le:image}
The intervals $(-\infty,\gamma_1]$, $[\gamma_2,\gamma_3]$,
$[\gamma_4,0)$ and $(0,\infty)$ are mapped by $z(\xi)$ to
$(0,\lambda_1]$, $[\lambda_2,\lambda_3]$, $[\lambda_4,\infty)$ and
$(-\infty,0)$ respectively. Furthermore, we have
$\lambda_2<\lambda_3$.
\end{lemma}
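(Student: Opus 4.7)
The plan is to combine the strict monotonicity of $z(\xi)$ on each of the four intervals, which has already been established in the paragraph preceding the lemma, with an explicit computation of the boundary values of $z$ at the endpoints. Since none of the poles $0$, $-1$, $-1/a$ of $z(\xi)$ lie in the interior of any of the four intervals (the poles $-1$ and $-1/a$ by the prior observation that they lie only in the intervals where $z'<0$, and the pole $0$ only at the boundary of $[\gamma_4,0)$ and $(0,\infty)$), the function $z$ is continuous on each interval. Strict monotonicity then forces $z$ to map each interval bijectively onto the interval spanned by the two boundary values.

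The only ingredient not yet in the paper is the limits of $z$ at the unbounded or singular endpoints. For $\xi\to\pm\infty$, expanding
\begin{equation*}
z(\xi) = -\frac{1}{\xi} + \frac{c(1-\beta)}{1+\xi} + \frac{ca\beta}{1+a\xi}
\end{equation*}
to leading order in $1/\xi$ gives $z(\xi)=-(1-c)/\xi+O(\xi^{-2})$, and since $0<c<1$ this shows $z(\xi)\to 0^+$ as $\xi\to-\infty$ and $z(\xi)\to 0^-$ as $\xi\to+\infty$. At the pole $\xi=0$ the dominant $-1/\xi$ term yields $z(\xi)\to+\infty$ as $\xi\to 0^-$ and $z(\xi)\to-\infty$ as $\xi\to 0^+$. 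At the finite critical points, $z(\gamma_k)=\lambda_k$ by the definition (\ref{eq:lambda}).

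Putting these endpoint values together with monotonicity immediately gives the four image intervals $(0,\lambda_1]$, $[\lambda_2,\lambda_3]$, $[\lambda_4,\infty)$ and $(-\infty,0)$; in particular, positivity of $\lambda_1$ is automatic, since starting from $0^+$ and increasing continuously, $z$ must reach a positive value at $\gamma_1$. Finally, the claim $\lambda_2<\lambda_3$ follows at once from strict monotonicity of $z$ on $[\gamma_2,\gamma_3]$ together with $\gamma_2<\gamma_3$. I expect no serious obstacle in this argument; the one point that could potentially be delicate, namely verifying that $[\gamma_2,\gamma_3]$ really does avoid the singularities of $z$, is precisely what the remark preceding the lemma guarantees, so everything reduces to monotonic interval images once the endpoint limits are written down.
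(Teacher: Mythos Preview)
Your proposal is correct and follows essentially the same approach as the paper: continuity of $z$ on each interval (since the poles are excluded), strict monotonicity established just before the lemma, and evaluation of the endpoint limits to read off the images, with $\lambda_2<\lambda_3$ following from $\gamma_2<\gamma_3$. The only difference is that you spell out the asymptotic expansion $z(\xi)=-(1-c)/\xi+O(\xi^{-2})$ to justify the limits at $\xi\to\pm\infty$ and $\xi\to 0^\pm$, which the paper leaves implicit.
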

\begin{proof} Since none of the points $-1$, $-\frac{1}{a}$ and $0$
belongs to these intervals both $z(\xi)$ and $z^{\prime}(\xi)$ are
continuous on these intervals. Moreover, $z(\xi)$ is strictly
increasing on these intervals. Therefore the images of these
intervals are given by
\begin{equation*}
\begin{split}
z\left((-\infty,\gamma_1]\right)&=(z(-\infty),z(\gamma_1)]=(0,\lambda_1]\\
z\left([\gamma_2,\gamma_3]\right)&=[z(\gamma_2),z(\gamma_3)]=[\lambda_2,\lambda_3]\\
z\left([\gamma_4,0)\right)&=[z(\gamma_4),z(0^-))=[\lambda_4,\infty)\\
z\left((0,\infty)\right)&=(z(0^+),z(\infty))=(-\infty,0).
\end{split}
\end{equation*}
where the $\pm$ superscripts in the above indicates that the
function is evaluated at $\pm\epsilon$ for $\epsilon\rightarrow 0$.
Finally, since $\gamma_3>\gamma_2$, we see that
$\lambda_3>\lambda_2$. This concludes the proof.
\end{proof}
Therefore the complement of $\mathrm{supp}(\underline{F})$ is given
by (recall that $\underline{F}$ has a point mass at $0$)
\begin{equation}\label{eq:suppc}
\mathrm{supp}(\underline{F})^c=(-\infty,0)\cup(0,\lambda_1)\cup(\lambda_2,\lambda_3)\cup(\lambda_4,\infty).
\end{equation}
Thus if $\lambda_1<\lambda_2$ and $\lambda_3<\lambda_4$, the support
of $\underline{F}$ will consist of the 2 intervals
$[\lambda_1,\lambda_2]$ and $[\lambda_3,\lambda_4]$. We would like
to show that whenever $\Delta>0$, we have
$\lambda_1<\lambda_2<\lambda_3<\lambda_4$. To do this, let us take a
look at the zeros of the function $z^{\prime}(\xi)$ from the point
of view of branch points.

The $\lambda_k$ are the $z$-coordinates of the zeros of
$z^{\prime}(\xi)$ on $\Lie$. We can treat (\ref{eq:curve2}) as a
polynomial in $\xi$ then $\lambda_k$ will be the zeroes of its
discriminant $D_3(z)=(az)^4\prod_{i<j}(\xi_i-\xi_j)^2$.
\begin{equation}\label{eq:D3}
\begin{split}
D_3(z)&=(1-a)^2z^4+(2A_2^2B_1+2A_2B_2-4A_2^3-12aB_1+18aA_2)z^3\\
&+(B_2^2+A_2^2B_1^2+4A_2B_1B_2-12A_2^2B_2-12aB_1^2+18aB_2+18aA_2B_1-27a^2)z^2\\
&+(2B_1B_2^2+2A_2B_2B_1^2-12A_2B_2^2-4B_1^3a+18aB_1B_2)z+B_1^2B_2^2-4B_2^3.
\end{split}
\end{equation}
The zeros of (\ref{eq:D3}) also correspond to the branch points of
the Riemann surface $\Lie$. These branch points are given on $\Lie$
by $(\lambda_k,\gamma_k)$, for $k=1,\ldots, 4$.

Let us rename the $\lambda_k$ as $z_k$.
\begin{equation*}
\left\{\lambda_1,\ldots,\lambda_4\right\}=\left\{z_1,\ldots,z_4\right\},
\end{equation*}
where the above equality is only an equality between the sets. The
ordering of the points does not necessarily coincide. We shall order
the points $z_j$ such that $z_1<z_2<z_3<z_4$.

Let $J$ be the union of the intervals $[z_1,z_2]$ and $[z_3,z_4]$.
Since the leading coefficient of $D_3(z)$ is $(1-a)^2>0$, we see
that the sign of $D_3(z)$ and hence the 3 roots of the cubic
polynomial (\ref{eq:curve2}) behave as follows for $z\in\mathbb{R}$.
\begin{equation}\label{eq:realim}
\begin{split}
&1. \quad z\in\mathbb{R}\setminus J,\quad D_3(z)>0\Rightarrow
\textrm{$\xi$ has 3 distinct real roots} \\
&2. \quad z\in J,\quad D_3(z)<0\Rightarrow\textrm{$\xi$ has 1 real
and 2 complex roots}.
\end{split}
\end{equation}
In particular, since the roots coincide at the branch points, the
$\gamma_j$ are the values of the double roots of the cubic
(\ref{eq:curve2}) when $z=\lambda_j$. We then have the following
lemma.
\begin{lemma}\label{le:singu}
If $a>1$, then $-1\in[\gamma_1,\gamma_2]$ and
$-\frac{1}{a}\in[\gamma_3,\gamma_4]$. On the other hand, if $a<1$,
then $-\frac{1}{a}\in[\gamma_1,\gamma_2]$ and
$-1\in[\gamma_3,\gamma_4]$. This means that $z(\xi)$ has a
singularity on both of the intervals $[\gamma_1,\gamma_2]$ and
$[\gamma_3,\gamma_4]$.
\end{lemma}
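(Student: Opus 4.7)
The plan is to locate each zero of the quartic (\ref{eq:quart}) directly by reducing the critical point equation $z'(\xi)=0$ to a transparent monotonicity problem via the change of variable $\eta=-1/\xi$. Differentiating the partial-fraction expression for $z$ gives
\begin{equation*}
z'(\xi) = \frac{1}{\xi^2} - \frac{c(1-\beta)}{(1+\xi)^2} - \frac{ca^2\beta}{(1+a\xi)^2},
\end{equation*}
so that, since all $\gamma_k<0$ (the coefficients of $g$ are positive), the equation $z'(\xi)=0$ is equivalent to $\phi(\eta)=1$ with
\begin{equation*}
\phi(\eta) := \frac{c(1-\beta)}{(\eta-1)^2} + \frac{ca^2\beta}{(\eta-a)^2}, \qquad \eta=-1/\xi>0.
\end{equation*}
This $\phi$ has double poles at $\eta=1$ and $\eta=a$, vanishes at $\pm\infty$, and satisfies $\phi(0)=c(1-\beta)+c\beta = c<1$.

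I would next analyze the monotonicity of $\phi$ on the two outer intervals by inspecting $\phi'$ term by term: both summands of $\phi'(\eta)$ have the same sign whenever $\eta<\min(1,a)$ or $\eta>\max(1,a)$, so $\phi$ is strictly increasing on $(-\infty,\min(1,a))$ and strictly decreasing on $(\max(1,a),\infty)$. Combined with $\phi(0)=c<1$ and $\phi(\eta)\to+\infty$ at the two poles, this yields exactly one solution of $\phi(\eta)=1$ in $(0,\min(1,a))$ and exactly one in $(\max(1,a),\infty)$.

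The hypothesis $\Delta>0$, combined with the sign pattern $g(-\infty)>0$, $g(-1)=-c(1-\beta)(1-a)^2<0$, $g(-1/a)=-c\beta(1-a)^2/a^2<0$, $g(0)=1>0$, forces the quartic $g$ to have exactly four real roots: at least two by the sign changes, and a real quartic with $\Delta>0$ has either zero or four. Since the two outer intervals account for two roots of $\phi(\eta)=1$, the remaining two must lie in the middle interval $(\min(1,a),\max(1,a))$. Translating back through $\xi=-1/\eta$ and using $\gamma_1<\gamma_2<\gamma_3<\gamma_4$: for $a>1$ we get $\gamma_1\in(-\infty,-1)$, $\gamma_2,\gamma_3\in(-1,-1/a)$, $\gamma_4\in(-1/a,0)$, and for $a<1$ the reversed sandwich $\gamma_1\in(-\infty,-1/a)$, $\gamma_2,\gamma_3\in(-1/a,-1)$, $\gamma_4\in(-1,0)$. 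The stated inclusions of $-1$ and $-1/a$ among the intervals $[\gamma_1,\gamma_2]$ and $[\gamma_3,\gamma_4]$ follow at once, and since each of these intervals contains one pole of $z(\xi)$, $z$ is indeed singular on both.

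The main obstacle I expect is using $\Delta>0$ cleanly to exclude the alternative root distributions $(3,0,1)$ and $(1,0,3)$ (counting real roots in the three intervals $(-\infty,\min(-1,-1/a))$, $(\min(-1,-1/a),\max(-1,-1/a))$, $(\max(-1,-1/a),0)$) that are consistent with the sign data alone; this is exactly what the $\phi$-picture buys us, by rigidly fixing one root in each of the two outer intervals in $\eta$ and leaving the middle interval to absorb the remaining two. Everything else reduces to bookkeeping with the sign of $\phi-1$.
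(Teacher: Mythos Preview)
Your proof is correct and takes a genuinely different route from the paper's. The paper argues by contradiction: assuming $[\gamma_1,\gamma_2]$ contains neither pole of $z(\xi)$, it uses continuity of $z$ on $[\gamma_1,\gamma_3]$ together with a case analysis on the possible orderings of $\lambda_1,\lambda_2,\lambda_3$ and the count of real preimages of a point (coming from the sign of the cubic discriminant $D_3(z)$) to produce two distinct real $m$ with the same $z$-value in a region where only one can exist. Your argument is direct: the substitution $\eta=-1/\xi$ turns the quartic $g(\xi)=0$ into the level-set equation $\phi(\eta)=1$ for a function whose monotonicity on $(-\infty,\min(1,a))$ and $(\max(1,a),\infty)$ is immediate from the sign of each summand of $\phi'$, pinning exactly one root in each outer $\eta$-interval; the hypothesis $\Delta>0$ then forces the remaining two real roots into the middle interval. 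Your approach is more elementary and actually yields finer information (it locates each $\gamma_k$ in its own subinterval, e.g.\ $\gamma_1<-1<\gamma_2<\gamma_3<-1/a<\gamma_4$ when $a>1$), whereas the paper's argument stays closer to the $z$-plane machinery already in play and feeds naturally into the subsequent proof of Lemma~\ref{le:order}.
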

\begin{proof} We will show that $z(\xi)$ cannot be continuous on
either $[\gamma_1,\gamma_2]$ or $[\gamma_3,\gamma_4]$. This means
that both intervals must contain a singularity of $z(\xi)$, which is
one of the points $-1$, $-\frac{1}{a}$. This would imply the lemma.

Let us assume that none of the points $-1$ or $-\frac{1}{a}$ belongs
to $[\gamma_1,\gamma_2]$. Then the function $z(\xi)$ is continuous
on $[\gamma_1,\gamma_2]$. Moreover, from the behavior of $z$ at
these points (\ref{eq:zsing}), we see that none of these points
belongs to $[\gamma_2,\gamma_3]$ either and hence $z(\xi)$ is
continuous on $[\gamma_1,\gamma_3]$. We also seen in the proof of
Lemma \ref{le:image} that $\lambda_2<\lambda_3$. Let us also note
that $\lambda_1<\lambda_4$. If not, then (\ref{eq:suppc}) would
imply that the support of $\underline{F}$ is empty, which cannot
happen. Therefore there are only 3 possible ways of ordering the
points $\lambda_1$, $\lambda_2$ and $\lambda_3$.
\begin{equation}\label{eq:cases}
\begin{split}
&1. \quad\lambda_3>\lambda_1>\lambda_2;\\
&2.\quad\lambda_1>\lambda_3>\lambda_2;\\
&3.\quad\lambda_3>\lambda_2>\lambda_1.
\end{split}
\end{equation}
We will show that $z(\xi)$ cannot be continuous on
$[\gamma_1,\gamma_3]$ in any of these cases.

First note that by the remark after (\ref{eq:zsing}), we see that
$z^{\prime}(\xi)$ is negative between $\gamma_1$ and $\gamma_2$ and
hence we must have $\lambda_1>\lambda_2$ if $z(\xi)$ is continuous
on $[\gamma_1,\gamma_2]$. We therefore rule out the third case in
(\ref{eq:cases}).

Let us now assume $\lambda_3>\lambda_1>\lambda_2$. Then by
(\ref{eq:realim}) and the fact that $\lambda_1<\lambda_4$, we see
that for $z_0\in[\lambda_2,\lambda_1]$, there is only one real $m$
such that $z_0=z(m)$. Let $z_0\in[\lambda_2,\lambda_1]$. Since
$z(\xi)$ is continuous between $\gamma_1$ and $\gamma_2$, the
interval $[\lambda_2,\lambda_1]$ is in the image of
$[\gamma_1,\gamma_2]$ under $z(\xi)$. Hence there is at least one
point $m_0$ in $[\gamma_1,\gamma_2]$ such that $z(m_0)=z_0$.
Similarly, since $z(\xi)$ is continuous between $\gamma_2$ and
$\gamma_3$, the interval $[\lambda_2,\lambda_3]$ is in the image of
$[\gamma_2,\gamma_3]$ under $z(\xi)$. Since
$\lambda_3>\lambda_1>\lambda_2$, the interval
$[\lambda_1,\lambda_2]$ is a subset of $[\lambda_2,\lambda_3]$ and
hence there is at least one point $m_1\in[\gamma_2,\gamma_3]$ such
that $z(m_1)=z_0$. This contradicts the fact that there is only one
real $m$ such that $z_0=z(m)$ in $[\lambda_2,\lambda_1]$.

Let us now consider the case when $\lambda_1>\lambda_3>\lambda_2$.
In this case, for $z_0\in[\lambda_2,\lambda_3]$, there is only one
real $m$ such that $z_0=z(m)$. Again, by continuity of $z(\xi)$ on
$[\gamma_2,\gamma_3]$ and $[\gamma_1,\gamma_2]$, we see that there
is at least one point $m_0$ in $[\gamma_2,\gamma_3]$ and another
point $m_1\in[\gamma_1,\gamma_2]$ such that $z(m_1)=z(m_0)=z_0$.
This leads to a contradiction and hence this cannot be the case
either.

By using the same argument, we can show that $z(\xi)$ cannot be
continuous on $[\gamma_2,\gamma_4]$ and hence one of the points
$-1$, $-\frac{1}{a}$ must be in $[\gamma_1,\gamma_2]$ while the
other one is in $[\gamma_3,\gamma_4]$. This implies the lemma.
\end{proof}

We can now show that the points $\lambda_k$ are ordered as
$\lambda_1<\lambda_2<\lambda_3<\lambda_4$.
\begin{lemma}\label{le:order}
The $\lambda_k$ satisfies $\lambda_1<\lambda_2<\lambda_3<\lambda_4$.
\end{lemma}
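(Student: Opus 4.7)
The plan is to combine the classification of the $\gamma_k$ as local extrema of $z(\xi)$ with a root-counting argument for the cubic (\ref{eq:curve2}), and then use the already established inequality $\lambda_2<\lambda_3$ from Lemma \ref{le:image} to pin down the ordering.

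First, from (\ref{eq:zprime}) the sign of $z^{\prime}(\xi)$ on $\mathbb{R}$ coincides with the sign of the quartic (\ref{eq:quart}), since the prefactor $1/(\xi^2(1+\xi)^2(1+a\xi)^2)$ is positive wherever it is defined. The quartic has positive leading coefficient $a^{2}(1-c)$ and, by the assumption $\Delta>0$, four simple real roots $\gamma_1<\gamma_2<\gamma_3<\gamma_4$. Therefore $z^{\prime}$ is positive on $(-\infty,\gamma_1)$, negative on $(\gamma_1,\gamma_2)$, positive on $(\gamma_2,\gamma_3)$, negative on $(\gamma_3,\gamma_4)$, and positive on $(\gamma_4,\infty)$. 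Consequently $\gamma_1,\gamma_3$ are local maxima and $\gamma_2,\gamma_4$ are local minima of $z(\xi)$, so $\lambda_1,\lambda_3$ are local maximum values and $\lambda_2,\lambda_4$ are local minimum values of $z$.

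Next, I observe that the four $\lambda_k$'s must be pairwise distinct: if $\lambda_i=\lambda_j=\lambda^{\ast}$ for $i\neq j$, then substituting $z=\lambda^{\ast}$ into (\ref{eq:curve2}) would give a cubic in $\xi$ with double roots at both $\gamma_i$ and $\gamma_j$, exceeding its degree $3$. The number $N(z_0)$ of real solutions of $z(\xi)=z_0$ therefore changes only at the simple zeros $\lambda_k$ of the discriminant $D_3$, and by (\ref{eq:realim}) equals $3$ on $\mathbb{R}\setminus J$ and $1$ on $J$. A standard local analysis near a non-degenerate critical point, using the expansion $z(\xi)-\lambda_k=\tfrac{1}{2}z^{\prime\prime}(\gamma_k)(\xi-\gamma_k)^2+O((\xi-\gamma_k)^3)$, shows that $N$ decreases by $2$ as $z_0$ increases through a local maximum value of $z$, and increases by $2$ through a local minimum value. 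Since $N(z_0)\in\{1,3\}$ and $N(\pm\infty)=3$ (as can be verified directly from (\ref{eq:zsing}) together with $z(\pm\infty)=0$), the four jumps must follow the pattern $(-2,+2,-2,+2)$. Hence when the $\lambda_k$'s are arranged in increasing order, they must alternate between local maximum values and local minimum values of $z$.

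To conclude, let $z_1<z_2<z_3<z_4$ denote the ordered $\lambda_k$'s. The alternation just established forces $\{z_1,z_3\}=\{\lambda_1,\lambda_3\}$ and $\{z_2,z_4\}=\{\lambda_2,\lambda_4\}$. Combined with $\lambda_2<\lambda_3$ from Lemma \ref{le:image}: if $\lambda_2=z_4$ then $\lambda_2>\lambda_3$, and if $\lambda_3=z_1$ then $\lambda_3<\lambda_2$, both contradictions. Therefore $\lambda_2=z_2$ and $\lambda_3=z_3$, which forces $\lambda_1=z_1$ and $\lambda_4=z_4$, yielding $\lambda_1<\lambda_2<\lambda_3<\lambda_4$ as claimed. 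The only technically substantive ingredient is the $\pm 2$ jump rule for $N$ at each critical value, which is a standard transversality computation but does rely on each zero of $D_3$ being simple, handled by the short preliminary distinctness observation; everything else is combinatorial.
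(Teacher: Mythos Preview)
Your proof is correct and takes a genuinely different route from the paper's. The paper argues by first establishing Lemma~\ref{le:singu} (that each of the intervals $[\gamma_1,\gamma_2]$ and $[\gamma_3,\gamma_4]$ contains a pole of $z(\xi)$), and then running a case-by-case continuity argument: for each of the bad orderings in (\ref{eq:cases}) it exhibits a value $z_0$ lying in an interval where the cubic (\ref{eq:curve2}) has a unique real root, yet produces two distinct real preimages of $z_0$ under $z(\xi)$, one coming from $[\gamma_2,\gamma_3]$ and one from the half-interval on the far side of the singularity inside $[\gamma_1,\gamma_2]$. Your argument replaces all of this by a single global observation: the parity of the jump of the real-root count $N(z_0)$ at each critical value is dictated by whether the corresponding $\gamma_k$ is a local maximum or minimum of $z$, and since $N$ starts and ends at $3$ and lives in $\{1,3\}$, the ordered critical values must alternate max--min--max--min. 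Combined with $\lambda_2<\lambda_3$ from Lemma~\ref{le:image}, this forces the claimed ordering without any casework and, notably, without invoking Lemma~\ref{le:singu} at all. The trade-off is that the paper's approach makes the geometry of the map $z(\xi)$ (the location of the poles relative to the $\gamma_k$) explicit, which is used again later, whereas your argument is cleaner but bypasses that information. Your preliminary distinctness step (two coincident $\lambda_k$'s would force the cubic in $\xi$ to carry two distinct double roots) is a nice touch that makes the simple-zero assumption on $D_3$ honest.
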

\begin{proof} As we have seen in the proof of Lemma \ref{le:singu},
there are only 3 possible ways of ordering the points $\lambda_1$,
$\lambda_2$ and $\lambda_3$, which is indicated in (\ref{eq:cases}).
We will show that the cases 1 and 2 in (\ref{eq:cases}) are not
possible.

Let us assume $\lambda_3>\lambda_1>\lambda_2$. By Lemma
\ref{le:singu}, there is a singularity of $z(\xi)$ in
$[\gamma_1,\gamma_2]$. Let us call this singularity $s_0$. Let
$z_0\in[\lambda_2,\lambda_1]$. Then there is only one real
$m\in\mathbb{R}\setminus\{0,-1,-\frac{1}{a}\}$ such that $z(m)=z_0$.
By the continuity of $z(\xi)$ on $[\gamma_2,\gamma_3]$, there exists
a point $m_0\in[\gamma_2,\gamma_3]$ such that $z(m_0)=z_0$. On the
other hand, let $R>\lambda_1$, since $z(\xi)$ is continuous on
$(s,\gamma_2]$ and $z(s+\epsilon)\rightarrow +\infty$ for small
$\epsilon>0$, there exists $\delta>0$ such that $[\lambda_2,R]$ is
contained in the image of $[s_0+\delta,\gamma_2]$ under the map
$z(\xi)$. In particular, there exists another $m_1$ on
$[s_0+\delta,\gamma_2]$ such that $z(m_1)=z_0$. This leads to a
contradiction.

Let us now assume $\lambda_1>\lambda_3>\lambda_2$. Then for
$z_0\in[\lambda_2,\lambda_3]$, there can only be one real
$m\in\mathbb{R}\setminus\{0,-1,-\frac{1}{a}\}$ such that $z(m)=z_0$.
Applying the continuity argument, we again see that there is an
$m_0\in[\gamma_2,\gamma_3]$ such that $z(m_0)=z_0$ and that for any
$R>\lambda_3$, there exists $\delta>0$ such that $[\lambda_2,R]$ is
contained in the image of $[s_0+\delta,\gamma_2]$ and hence there is
another point $m_1\in[s_0+\delta,\gamma_2]$ such that $z(m_1)=z_0$.
This again leads to a contradiction and hence we must have
$\lambda_3>\lambda_2>\lambda_1$.

By carrying out the same argument for the points $\lambda_2$,
$\lambda_3$ and $\lambda_4$, we see that the only possible ordering
of these points is $\lambda_2<\lambda_3<\lambda_4$. Hence we must
have $\lambda_1<\lambda_2<\lambda_3<\lambda_4$.
\end{proof}
We therefore have the following condition for the support to
consists of 2 intervals.
\begin{theorem}\label{thm:cuts}
Let $\Delta$ be the discriminant of the quartic polynomial
\begin{equation}\label{eq:quartic}
\begin{split}
a^2(1-c)\xi^4
&+2(a^2(1-c\beta)+a(1-c(1-\beta))\xi^3\\
&+(1-c(1-\beta)+a^2(1-c\beta)+4a)\xi^2 +2(1+a)\xi+1=0.
\end{split}
\end{equation}
then the support of $\underline{F}$ consists of 2 disjoint intervals
if and only if $\Delta>0$.
\end{theorem}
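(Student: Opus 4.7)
The forward direction is essentially a corollary of the preceding lemmas. The plan is as follows: assuming $\Delta>0$, Lemma~\ref{le:order} delivers $\lambda_1<\lambda_2<\lambda_3<\lambda_4$, so combining this with the identification of the complement of the support in \eqref{eq:suppc} one concludes at once that
\begin{equation*}
\mathrm{supp}(\underline{F}) = [\lambda_1,\lambda_2]\cup[\lambda_3,\lambda_4],
\end{equation*}
a union of two disjoint intervals.

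For the converse I would argue by contrapositive: if $\Delta\leq 0$, the support cannot be two disjoint intervals. Rewriting \eqref{eq:zprime} as $z'(\xi)=P(\xi)/[\xi(1+\xi)(1+a\xi)]^2$ with $P$ the quartic in \eqref{eq:quartic}, the denominator is a perfect square on $\mathbb{R}$, and since all coefficients of $P$ are strictly positive (using $0<c,\beta<1$), any real root of $P$ is negative, so the sign of $z'(\xi)$ on $\mathbb{R}\setminus\{0,-1,-1/a\}$ is governed by $P$. When $\Delta<0$, $P$ has exactly two real roots $\gamma_-<\gamma_+<0$ together with a complex conjugate pair, hence $z'<0$ precisely on $(\gamma_-,\gamma_+)$. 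I would then rerun the case analysis of Lemma~\ref{le:singu}: if either singularity $-1$ or $-1/a$ lay outside $(\gamma_-,\gamma_+)$, continuity of $z(\xi)$ on a subinterval of $(-\infty,\gamma_-)\cup(\gamma_+,0)$, combined with the blow-up of $z$ at that singularity, would produce two distinct real preimages of some $z_0$ with $z'>0$ at both, contradicting Lemma~\ref{le:CS}. Once both singularities are pinned inside $(\gamma_-,\gamma_+)$, Lemma~\ref{le:CS} forces $\mathrm{supp}(\underline{F})\cap\mathbb{R}_+=[z(\gamma_-),z(\gamma_+)]$, a single interval. When $\Delta=0$, $P$ acquires a repeated real root, so two of the $\gamma_k$ coalesce and the corresponding $\lambda_k$ coalesce with them; the support then either collapses to one interval or to two intervals touching at a point, neither of which is two disjoint intervals.

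The bulk of the work has already been invested in Lemmas~\ref{le:singu} and \ref{le:order}, so the principal obstacle is adapting the contradiction scheme of Lemma~\ref{le:singu} to the regime with only two real critical points (rather than four) and to the degenerate $\Delta=0$ configuration. The delicate point in the latter is verifying that coalescence of branch points in the $\xi$-plane really does transfer to a coalescence of endpoints in the $z$-plane, rather than being concealed by the singularities at $-1$ and $-1/a$; I expect this to follow from continuity of every construction in the parameters $(a,\beta,c)$ together with a perturbative argument from the $\Delta>0$ regime settled above.
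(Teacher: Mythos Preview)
Your forward direction is identical to the paper's: Lemma~\ref{le:order} combined with \eqref{eq:suppc}.

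For the converse, the paper is far more economical than your proposal. It simply observes that if $\Delta\le 0$ the quartic \eqref{eq:quartic} has at most three distinct real roots $\gamma_k$, hence at most three distinct candidate endpoints $\lambda_k=z(\gamma_k)$ for $\mathrm{supp}(\underline{F})$; since two disjoint closed intervals require four distinct endpoints, this is impossible. No case split between $\Delta<0$ and $\Delta=0$, no localization of $-1$ and $-1/a$, and no perturbation argument is needed. Your route would eventually yield the stronger statement that the support is a single interval when $\Delta<0$, but for the theorem as stated the endpoint count suffices.

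One caution if you do pursue your approach: the contradiction you derive from ``two distinct real preimages of some $z_0$ with $z'>0$ at both'' does not follow from Lemma~\ref{le:CS} as stated, since that lemma does not assert uniqueness of such $m$. In the paper, the analogous contradictions in Lemmas~\ref{le:singu} and \ref{le:order} come instead from counting real roots of the cubic \eqref{eq:curve2} via the sign of $D_3(z)$ (see \eqref{eq:realim}). Alternatively, the remark after \eqref{eq:zsing} already gives $z'<0$ near $-1$ and $-1/a$, so $P(-1)<0$ and $P(-1/a)<0$, which pins both singularities inside $(\gamma_-,\gamma_+)$ directly without any contradiction argument.
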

\begin{proof} By Lemma \ref{le:order} and (\ref{eq:suppc}),
we see that if $\Delta>0$, then the support of $\underline{F}$
consists of 2 disjoint intervals. If $\Delta\leq 0$, then there can
be at most 3 distinct roots to the equation (\ref{eq:zprime}) and
hence there can be at most 3 end points to the support of
$\underline{F}$. This means that $\underline{F}$ cannot be supported
on 2 disjoint intervals.
\end{proof}
From this theorem, we see that all the $\lambda_k$ are positive.
\begin{corollary}\label{cor:posit}
The $\lambda_k$ satisfies $0<\lambda_1<\ldots<\lambda_4$.
\end{corollary}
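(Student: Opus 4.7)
The plan is almost entirely bookkeeping: Lemma \ref{le:order} has already established the strict ordering $\lambda_1<\lambda_2<\lambda_3<\lambda_4$, so the only new content needed for the corollary is the single inequality $\lambda_1>0$. Thus I would open the proof by invoking Lemma \ref{le:order} to reduce the statement to showing $\lambda_1>0$.

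For that remaining inequality, I would appeal directly to Lemma \ref{le:image}. That lemma asserts $z\bigl((-\infty,\gamma_1]\bigr)=(0,\lambda_1]$, so the left endpoint of the image is $0$ and the right endpoint is $\lambda_1$. Since $z^{\prime}(\xi)>0$ on $(-\infty,\gamma_1]$ (as noted in the discussion preceding Lemma \ref{le:image}) and $z(\xi)\to 0^+$ as $\xi\to-\infty$ (read off from (\ref{eq:curve1})), the monotone image must satisfy $\lambda_1=z(\gamma_1)>z(-\infty)=0$. Combining with Lemma \ref{le:order} gives $0<\lambda_1<\lambda_2<\lambda_3<\lambda_4$, which is the claim of the corollary.

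There is no real obstacle here; the corollary is a clean consequence of two previously proved lemmas, and the only verification required is that the boundary value $z(-\infty)$ is indeed $0$, which is immediate from the explicit form of $z(\xi)$ in (\ref{eq:curve1}). One could equivalently note that $\lambda_1\leq 0$ would force the component $(0,\lambda_1)$ of $\mathrm{supp}(\underline{F})^c$ in (\ref{eq:suppc}) to be degenerate, so no genuine gap would exist between $0$ and $\lambda_1$, contradicting the two-cut structure established under $\Delta>0$.
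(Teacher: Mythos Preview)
Your proposal is correct. Both the ordering (via Lemma~\ref{le:order}) and the positivity of $\lambda_1$ (via Lemma~\ref{le:image}) are validly established, and your check that $z(\xi)\to 0^+$ as $\xi\to-\infty$ from (\ref{eq:curve1}) is straightforward: the leading behavior is $z\sim (c-1)/\xi$ with $c<1$.

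Your route differs from the paper's. The paper argues positivity in two steps: first it observes that the support $[\lambda_1,\lambda_2]\cup[\lambda_3,\lambda_4]$ of the limiting eigenvalue measure must lie in $\mathbb{R}_+$, giving $\lambda_k\geq 0$; then it rules out $\lambda_1=0$ algebraically by noting that the $\lambda_k$ are the roots of the discriminant polynomial $D_3(z)$ in (\ref{eq:D3}), whose constant term equals $B_2^2(B_1^2-4B_2)=a^2(1-c)^2 D_2$, and $D_2>0$ was already checked after (\ref{eq:D2}). Your argument is more analytic and arguably cleaner, since Lemma~\ref{le:image} already records that $z$ maps $(-\infty,\gamma_1]$ onto $(0,\lambda_1]$, making $\lambda_1>0$ immediate without any new computation. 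The paper's approach, on the other hand, makes the nonvanishing at $z=0$ explicit in terms of the model parameters and ties back to the discriminant $D_2$ of the quadratic (\ref{eq:quad}), which is a nice structural observation even if not strictly needed here.
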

\begin{proof}From Theorem \ref{thm:cuts}, we see that the limiting
eigenvalue distribution is supported on
$[\lambda_1,\lambda_2]\cup[\lambda_3,\lambda_4]$. Therefore all
$\lambda_k\geq 0$. We want to show that $\lambda_1>0$. Since the
$\lambda_k$ are solutions to (\ref{eq:D3}), $\lambda_1=0$ if and
only if the constant term of (\ref{eq:D3}) is zero. This constant
term is
\begin{equation*}
B_2^2\left(B_1^2-4B_2\right)=a^2(1-c)^2D_2>0
\end{equation*}
where $D_2$ is given by (\ref{eq:D2}). This concludes the proof.
\end{proof}
Let us now compute the probability density $\underline{F}$ when it
is supported on 2 disjoint intervals.
\begin{theorem}\label{thm:density}
Suppose $\Delta>0$. Then the p.d.f $\underline{F}$ is supported on
$[\lambda_1,\lambda_2]\cup[\lambda_3,\lambda_4]$ with the following
density $d\underline{F}(z)=\rho(z)dz$
\begin{equation}\label{eq:density}
\frac{3}{2\pi}\left|\left(\frac{r(z)+\sqrt{-\frac{1}{27a^4z^4}D_3(z)}}{2}\right)^{\frac{1}{3}}-\left(\frac{r(z)-\sqrt{-\frac{1}{27a^4z^4}D_3(z)}}{2}\right)^{\frac{1}{3}}\right|,
\end{equation}
where $D_3(z)$ and $r(z)$ are given by
\begin{equation*}
\begin{split}
D_3(z)&=(1-a)^2\prod_{j=1}^4(z-\lambda_j),\\
r(z)&=\frac{1}{27}\Bigg(-\frac{2B_2^3}{a^3}z^{-3}+\left(\frac{9B_1B_2}{a^2}-\frac{6A_2B_2^2}{a^3}\right)z^{-2}+\left(\frac{9B_2}{a^2}+\frac{9B_1A_2}{a^2}-\frac{27}{a}-\frac{6A_2^2B_2}{a^3}\right)z^{-1}\\
&+\left(\frac{9A_2}{a^2}-\frac{2A_2^3}{a^3}\right)\Bigg).
\end{split}
\end{equation*}
The cube root in (\ref{eq:density}) is chosen such that
$\sqrt[3]{A}\in\mathbb{R}$ for $A\in\mathbb{R}$ and the square root
is chosen such that $\sqrt{A}>0$ for $A>0$.
\end{theorem}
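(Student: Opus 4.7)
The strategy is to combine the Stieltjes inversion formula with Cardano's formula applied to the cubic (\ref{eq:curve2}).

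First, I identify $m_{\underline{F}}(z)$ with the branch $\xi_1(z)$: from (\ref{eq:xiinfty}), $\xi_1(z)=-1/z+O(z^{-2})$ has exactly the large-$z$ expansion of the Stieltjes transform of a probability measure, and it is the unique branch of (\ref{eq:curve2}) that is analytic in the upper half-plane with this asymptotics. On the support $[\lambda_1,\lambda_2]\cup[\lambda_3,\lambda_4]$, (\ref{eq:realim}) gives $D_3(z)<0$, so the cubic in $\xi$ has one real root and a complex conjugate pair, and $\xi_1(z+i0)$ is the root of positive imaginary part. The inversion formula (\ref{eq:inver}) then reduces the problem to computing $\rho(z)=\frac{1}{\pi}\,\mathrm{Im}\,\xi_1(z+i0)$ on the interior of the support.

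To obtain a closed form for this imaginary part, I reduce (\ref{eq:curve2}) to depressed form. Dividing by $az$ and applying the Tschirnhaus substitution $\xi=\eta-\frac{A_2 z+B_2}{3az}$, which is real-valued in $z$ and therefore preserves imaginary parts, produces a depressed cubic $\eta^3+p(z)\,\eta-r(z)=0$, where $p(z)$ is a Laurent polynomial in $z^{-1}$. A direct expansion of the resulting constant term shows that it is exactly $-r(z)$ with $r(z)$ as given in the theorem statement, while the discriminant identity $D_3(z)/(a^4z^4)=-27\,r(z)^2-4\,p(z)^3$ (the content of Remark \ref{re:sqrt}) follows from the invariance of the discriminant under Tschirnhaus transformations combined with the definition (\ref{eq:D3}).

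Cardano's formula then gives the three roots of the depressed cubic as $u+v$, $\omega u+\omega^2 v$ and $\omega^2 u+\omega v$, with $\omega=e^{2\pi i/3}$ and $u^3,v^3$ the roots of the resolvent quadratic:
\[
u^3,\;v^3 \;=\; \frac{r(z)\pm\sqrt{-D_3(z)/(27\,a^4 z^4)}}{2}.
\]
Since $D_3<0$ on the support, the arguments under the square root are positive and $u,v$ are taken as real cube roots. The two complex conjugate roots then have imaginary parts $\pm\tfrac{\sqrt{3}}{2}(u-v)$, whence $\mathrm{Im}\,\xi_1(z+i0)=\tfrac{\sqrt{3}}{2}|u-v|$; substituting into the inversion formula produces the density (\ref{eq:density}). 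The main obstacle I anticipate is the algebraic bookkeeping: matching the explicit expression for $r(z)$ produced by the Tschirnhaus substitution against the formula stated in the theorem, and verifying the discriminant identity $D_3/(a^4z^4)=-27r^2-4p^3$. Everything else is a direct application of Cardano's formula together with Stieltjes inversion.
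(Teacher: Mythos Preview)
Your proposal is correct and follows essentially the same route as the paper: both apply Cardano's formula to the cubic (\ref{eq:curve2}) after the Tschirnhaus shift, identify the complex conjugate pair of roots on the support via the sign of $D_3$, and read off the density from Stieltjes inversion as $\tfrac{1}{2\pi}|\xi_{I_1}-\xi_{I_2}|$. One small point worth flagging: your computation $\mathrm{Im}\,\xi_1=\tfrac{\sqrt{3}}{2}|u-v|$ yields a prefactor $\tfrac{\sqrt{3}}{2\pi}$, whereas (\ref{eq:density}) prints $\tfrac{3}{2\pi}$; since $|\omega-\omega^2|=\sqrt{3}$, your constant is the one that actually follows from the paper's own derivation, and the $\tfrac{3}{2\pi}$ appears to be a typographical slip in the statement rather than an error in your argument.
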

\begin{proof} Let
$z_0\in[\lambda_1,\lambda_2]\cup[\lambda_3,\lambda_4]$ be a point in
the support on $\underline{F}$. Then as a polynomial in $\xi$,
(\ref{eq:curve2}) has the following solutions at $z=z_0$.
\begin{equation}\label{eq:cubsol}
\begin{split}
\xi_{R}(z_0)&=-\frac{1}{3}\left(\frac{A_2}{a}+\frac{B_2}{az_0}\right)+\sqrt[3]{u_+}+\sqrt[3]{u_-},\\
\xi_{I_1}(z_0)&=-\frac{1}{3}\left(\frac{A_2}{a}+\frac{B_2}{az_0}\right)+\omega\sqrt[3]{u_+}+\omega^2\sqrt[3]{u_-},\\
\xi_{I_2}(z_0)&=-\frac{1}{3}\left(\frac{A_2}{a}+\frac{B_2}{az_0}\right)+\omega^2\sqrt[3]{u_+}+\omega\sqrt[3]{u_-},
\end{split}
\end{equation}
where $u_{\pm}$ are
\begin{equation}\label{eq:u}
u_{\pm}=\frac{r(z)\pm\sqrt{-\frac{1}{27a^4z^4}D_3(z)}}{2}
\end{equation}
and $\omega=e^{\frac{2\pi i}{3}}$ is the cube root of unity. The
functions $r(z)$, $D_3(z)$ and constants $A_2$, $B_1$, $B_2$ defined
in the statement of the theorem. The branches of the cube root in
(\ref{eq:cubsol}) and the square root in (\ref{eq:u}) can be chosen
arbitrarily, as long as all the branches are the same. We will not
treat (\ref{eq:cubsol}) as analytic functions in $z_0$, but merely
consider them as the different values of the roots of
(\ref{eq:curve2}) at the point $z=z_0$.

Since $D_3(z)<0$ on $\mathrm{supp}(\underline{F})$,
$u_{\pm}(z)\in\mathbb{R}$ on the support of $\underline{F}$. Let us
choose the square root and cubic root in (\ref{eq:cubsol}) in the
way that is indicated in the statement of the theorem. Then for any
point $z_0\in\mathrm{supp}(\underline{F})$, $\xi_R(z_0)$ is real
while $\xi_{I_1}(z_0)$ and $\xi_{I_2}(z_0)$ are complex conjugate to
each other. Since the analytic continuation of
$m_{\underline{F}}(z)$ on the positive side of $\mathbb{R}$ becomes
one of the complex roots on $\mathrm{supp}(\underline{F})$, we see
that the imaginary part of $m_{\underline{F}}(z_0)$ must coincide
with the imaginary part of either $\xi_{I_1}(z_0)$ or
$\xi_{I_2}(z_0)$. In particular, we have
\begin{equation}\label{eq:imf}
\left|\mathrm{Im}\left(m_{\underline{F}}(z_0)\right)\right|=\frac{1}{2}\left|\xi_{I_1}(z_0)-\xi_{I_2}(z_0)\right|.
\end{equation}
From the inversion formula and the fact that the p.d.f.
$\underline{F}$ is non-negative, we see that the probability density
for $\underline{F}$ is given by
\begin{equation*}
d\underline{F}=\frac{1}{2\pi}\left|\xi_{I_1}(z)-\xi_{I_2}(z)\right|dz,\quad
z\in\mathrm{supp}(\underline{F}).
\end{equation*}
By substituting (\ref{eq:cubsol}) back into this equation, we
arrived at the theorem.
\end{proof}
\subsubsection{Branch cuts of Riemann surface}\label{se:branch}

In this section we will show that the solution $\xi_1(z)$
(\ref{eq:xiinfty}) of (\ref{eq:curve2}) coincides with the Stieltjes
transform $m_{\underline{F}}(z)$ in $\mathbb{C}^+\cup\mathbb{R}$.
Furthermore, when the support of $\underline{F}$ consists of 2
disjoint intervals, the function $\xi_1(z)$ and hence
$m_{\underline{F}}(z)$ will not be analytic at any of the points
$\lambda_k$, $k=1,\ldots,4$.

The solutions $\xi_j(z)$ of (\ref{eq:curve2}) will not be analytic
at the branch point $(\lambda_k,\gamma_k)$ if
$\xi_j(\lambda_k)=\gamma_k$. Since all the $\lambda_k$ are on the
real axis, and that the only possible pole of these functions is at
$z=0$, there exists analytic continuations of the $\xi_j(z)$ in
$\mathbb{C}^+$ that are continuous up to $\mathbb{R}\setminus\{0\}$.
We have the following lemma
\begin{lemma}\label{le:bp}
Let $\xi_j(z)$ be the solutions of (\ref{eq:curve2}) in
$\left(\mathbb{C}^+\cup\mathbb{R}\right)\setminus\{0\}$ that has the
asymptotic behavior (\ref{eq:xiinfty}). Then for $j=1,2,3$,
$\xi_j(\lambda_1)=\gamma_1$ if and only if
$\xi_j(\lambda_2)=\gamma_2$. Similarly, $\xi_j(\lambda_3)=\gamma_3$
if and only if $\xi_j(\lambda_4)=\gamma_4$.
\end{lemma}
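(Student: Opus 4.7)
The plan is to identify, at each branch point $\lambda_k$, which two of the three sheets merge by using the sign of $D_3(z)$ on the intervals $(\lambda_1,\lambda_2)$ and $(\lambda_3,\lambda_4)$. By (\ref{eq:realim}), $D_3(z)<0$ on the open interior of either interval, so there the cubic (\ref{eq:curve2}) in $\xi$ has exactly one real root and a complex conjugate pair. Since each $\xi_j$ is analytic on $\mathbb{C}^+$ and extends continuously to $\mathbb{R}\setminus\{0\}$, the three boundary values $\xi_1(x+i0),\xi_2(x+i0),\xi_3(x+i0)$ at any $x\in(\lambda_1,\lambda_2)$ must reproduce precisely those three roots.

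First I would show that on $(\lambda_1,\lambda_2)$ there is a well-defined partition of $\{1,2,3\}$ into a single index (whose boundary value is real) and a pair (whose boundary values form the complex conjugate pair). The partition is locally constant by continuity, and globally constant because two of the $\xi_j$'s can agree on the open interval only where the cubic has a repeated root, and by the definition of $D_3(z)$ this occurs only at the $\lambda_k$. So a single pair $\{a,b\}\subset\{1,2,3\}$ with remaining index $c$ is uniquely associated to $(\lambda_1,\lambda_2)$.

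Next I would take one-sided limits at the endpoints. As $x\to\lambda_1^+$ the complex conjugate pair of cubic roots coalesces into the double root $\gamma_1$, while the remaining simple root of (\ref{eq:curve2}) at $z=\lambda_1$ is distinct from $\gamma_1$. Continuity of the $\xi_j$'s up to $\mathbb{R}\setminus\{0\}$ then forces $\xi_a(\lambda_1)=\xi_b(\lambda_1)=\gamma_1$ and $\xi_c(\lambda_1)\neq\gamma_1$; the analogous statement at $\lambda_2$ yields $\xi_a(\lambda_2)=\xi_b(\lambda_2)=\gamma_2$ and $\xi_c(\lambda_2)\neq\gamma_2$. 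Hence for $j\in\{1,2,3\}$, $\xi_j(\lambda_1)=\gamma_1$ iff $j\in\{a,b\}$ iff $\xi_j(\lambda_2)=\gamma_2$. The argument for $\lambda_3,\lambda_4$ is identical, applied to $(\lambda_3,\lambda_4)$.

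The main obstacle is the global constancy of the partition in the middle step: one must rule out that the index of the real-valued boundary value swaps as $x$ varies across the interval. This is handled by the observation that any such swap would force two of the $\xi_j$'s to coincide at an interior point of $(\lambda_1,\lambda_2)$, which is precluded by $D_3(z)<0$ strictly in the open interior, since coincidence of roots of (\ref{eq:curve2}) occurs precisely on the zero set of $D_3(z)$.
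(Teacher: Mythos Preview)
Your proposal is correct and follows essentially the same approach as the paper. The paper's proof is simply a more compressed version of yours: it argues directly that if $\xi_j(\lambda_1)=\gamma_1$ then, since the double root at $\lambda_1$ bifurcates into the complex conjugate pair on $(\lambda_1,\lambda_2)$, continuity forces $\xi_j$ to be one of the complex roots throughout, and hence to equal the double root $\gamma_2$ at $\lambda_2$. Your explicit discussion of why the real/complex partition of the indices is globally constant on the interval spells out the ``by continuity'' step that the paper leaves implicit.
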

\begin{proof} Suppose $\xi_j(\lambda_1)=\gamma_1$ for some $j$. Then
$\xi_j$ takes the value of the double root of (\ref{eq:curve2}) at
$\lambda_1$ and by continuity, $\xi_j$ becomes one of the complex
roots of (\ref{eq:curve2}) on the interval $[\lambda_1,\lambda_2]$.
This implies that at $\lambda_2$, $\xi_j$ will take the value of the
double root of (\ref{eq:curve2}) again. Therefore we have
$\xi_j(\lambda_2)=\gamma_2$. Similarly, we see that if
$\xi_j(\lambda_2)=\gamma_2$, then we also have
$\xi_j(\lambda_1)=\gamma_1$. Hence $\xi_j(\lambda_1)=\gamma_1$ if
and only if $\xi_j(\lambda_2)=\gamma_2$. Applying the same argument
to the points $\lambda_3$ and $\lambda_4$, the lemma is proven.
\end{proof}
Let us now show that the function $\xi_1(z)$ is in fact the
Stieltjes transform $m_{\underline{F}}(z)$. From the asymptotic
behavior of the $\xi_j(z)$ (\ref{eq:xiinfty}) and the fact that the
Stieltjes transform $m_{\underline{F}}(z)$ solves (\ref{eq:curve2})
and vanishes as $z\rightarrow\infty$, we see that
\begin{equation}\label{eq:stiexi}
m_{\underline{F}}(z)=\xi_1(z),\quad
z\in\left(\mathbb{C}^+\cup\mathbb{R}\right)\setminus\{0\}.
\end{equation}
For $c<1$, it was shown in \cite{CS} that $F$ has a continuous
density and hence the Stieltjes transform $m_F(z)$ does not have any
poles. Therefore, by (\ref{eq:stielim}), we see that
$m_{\underline{F}}(z)$, and hence $\xi_1(z)$, has the following
singularity at $z=0$.
\begin{equation}\label{eq:xizero}
\xi_1(z)=-\frac{1-c}{z}+O(1),\quad z\rightarrow 0.
\end{equation}
We will now show that for all $\lambda_k$, we have
$\xi_1(\lambda_k)=\gamma_k$.
\begin{proposition}\label{pro:branchp}
Let $\xi_1(z)$ be the solution of (\ref{eq:curve2}) in
$\left(\mathbb{C}^+\cup\mathbb{R}\right)\setminus\{0\}$ with the
asymptotic behavior indicated as in (\ref{eq:xiinfty}). Then we have
$\xi_1(\lambda_k)=\gamma_k$ for $k=1,\ldots, 4$. In particular,
$\xi_1(z)$ is not analytic at any of the points $\lambda_k$.
\end{proposition}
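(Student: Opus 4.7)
The plan is to combine the identification $\xi_1=m_{\underline{F}}$ from (\ref{eq:stiexi}) with the Stieltjes inversion formula and the local behaviour of the cubic (\ref{eq:curve2}) at its branch points, rather than appealing to Lemma \ref{le:bp} directly.

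First I would note that, by Theorem \ref{thm:density}, the limiting density $\rho(z)$ is strictly positive on the interior of $[\lambda_1,\lambda_2]\cup[\lambda_3,\lambda_4]$. The inversion formula (\ref{eq:inver}) then yields
\[
\mathrm{Im}\,\xi_1(x+i0)=\mathrm{Im}\,m_{\underline{F}}(x+i0)=\pi\rho(x)>0
\]
for every $x$ in the interior of the support. By (\ref{eq:realim}), at each such $x$ the cubic (\ref{eq:curve2}) has exactly one real root and a complex conjugate pair, so $\xi_1(x)$ must be the element of that pair with strictly positive imaginary part.

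Next I would let $x\to\lambda_k$ from inside the support. Remark \ref{re:sqrt} shows that $\rho$ vanishes like a square root at each $\lambda_k$, so $\mathrm{Im}\,\xi_1(\lambda_k)=0$ and hence $\xi_1(\lambda_k)\in\mathbb{R}$. At $z=\lambda_k$ the discriminant $D_3$ has a simple zero (its sign flips between $\mathbb{R}\setminus J$ and $J$ by (\ref{eq:realim})), so the cubic admits a double root at $\gamma_k$ together with a distinct simple real root. Tracking the three roots as $z$ crosses $\lambda_k$ from $\mathbb{R}\setminus J$ into $J$, the simple root stays isolated and real, while $\gamma_k$ is the one that splits into the complex conjugate pair present on the support. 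Since $\xi_1$ lies in that pair on $J$, continuity forces $\xi_1(\lambda_k)=\gamma_k$ for $k=1,\dots,4$.

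The non-analyticity of $\xi_1$ at each $\lambda_k$ is then immediate: $(\lambda_k,\gamma_k)$ is a branch point of $\mathcal{L}$ because $dz/d\xi$ vanishes at $\gamma_k$, so two sheets of the triple cover coalesce there and $\xi_1$ must carry a square-root singularity, consistent with (\ref{eq:rhok}). The only delicate conceptual step is identifying which of the three roots $\xi_1$ specializes to on the support, and this is pinned down purely by the sign of $\mathrm{Im}\,\xi_1(x+i0)$ inherited from positivity of the density; once this is in place, Lemma \ref{le:bp} serves as a consistency check linking the two endpoints inside each component of the support.
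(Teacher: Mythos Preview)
Your argument is correct and takes a genuinely different route from the paper's. The paper proceeds by contradiction via Lemma~\ref{le:CS}: assuming $\xi_1(\lambda_k)\neq\gamma_k$, it observes that $\xi_1$ then equals the simple (hence real) root in a two-sided neighbourhood of $\lambda_k$, and inherits $z'(\xi_1)>0$ there by continuity from outside the support; since this neighbourhood overlaps $\mathrm{supp}(\underline{F})$, one obtains a point in the support where $m_{\underline{F}}$ is real with $z'(m_{\underline{F}})>0$, contradicting Lemma~\ref{le:CS}. Your approach instead works directly: you pin down $\xi_1(x+i0)$ as the member of the complex-conjugate pair with positive imaginary part on the interior of $J$ (via the inversion formula and the root classification (\ref{eq:realim})), and then observe that this pair collapses precisely to the double root $\gamma_k$ as $x\to\lambda_k$, forcing $\xi_1(\lambda_k)=\gamma_k$ by continuity. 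The paper's route stays entirely within the Silverstein--Choi machinery already set up, whereas yours is more geometric and avoids both the contradiction and any appeal to Lemma~\ref{le:CS}; the price is a dependence on the strict positivity of $\rho$ in the interior, which you take from Theorem~\ref{thm:density} (this is logically available, though one could also deduce it directly from the sign of $D_3$ without invoking the explicit density formula). One small remark: your opening line suggests the paper's proof rests on Lemma~\ref{le:bp}, but it does not---the key input there is Lemma~\ref{le:CS}; Lemma~\ref{le:bp} only enters later in determining the sheet structure (Proposition~\ref{pro:sheet}).
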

\begin{proof} Suppose for some $\lambda_k$ we have
$\xi_1(\lambda_k)\neq\gamma_k$ and that the support of
$\underline{F}$ is on the left hand side of $\lambda_k$. Then we
have $z^{\prime}\left(\xi_1(\lambda_k)\right)\neq 0$. By lemma
\ref{le:CS}, we must have
$z^{\prime}\left(m_{\underline{F}}(\lambda_k+\delta)\right)>0$ for
small enough $\delta>0$ as $\lambda_k+\delta$ does not belong to the
support of $\underline{F}$ and that $m_{\underline{F}}$ is real on
$\lambda_k+\delta$. Then by continuity, we have
$z^{\prime}\left(m_{\underline{F}}(\lambda_k)\right)>0$. (As we
assume $z^{\prime}\left(m_{\underline{F}}(\lambda_k)\right)\neq0$)

Since $\xi_1(z)$ does not coincide with the double root of
(\ref{eq:curve2}) at $\lambda_k$, it must be real in
$[\lambda_k-\epsilon,\lambda_k+\epsilon]$ for some small
$\epsilon>0$. For small enough $\epsilon$, we can assume that
neither 0, -1 or $-\frac{1}{a}$ are inside this interval.

Therefore in the interval $[\lambda_k-\epsilon,\lambda_k+\epsilon]$,
the Stieltjes transform $m_{\underline{F}}(z)$ is real and by
continuity, $z^{\prime}\left(m_{\underline{F}}\right)>0$ inside this
interval. Since this interval consists of points that belong to the
support of $\underline{F}$, this would imply that for some point
$z_0\in\mathrm{supp}(\underline{F})$, we have
$z^{\prime}\left(m_{\underline{F}}\right)>0$ and
$m_{\underline{F}}\in\mathbb{R}\setminus\{0,-1,-\frac{1}{a}\}$. This
contradicts lemma \ref{le:CS} and hence $\xi_1(\lambda_k)=\gamma_k$.
By using exactly the same argument, we can prove the proposition for
$\lambda_k$ when the support lies on the right hand side of
$\lambda_k$.
\end{proof}
This proposition implies that $\xi_1(z)$ is not analytic at the
points $\lambda_k$ for $k=1,\ldots,4$. With this information, we can
now find out the structure of the Riemann surface $\Lie$.
\begin{proposition}\label{pro:sheet}
If $a>1$, then $\xi_3(z)$ has branch points at $\lambda_3$ and
$\lambda_4$ and $\xi_2(z)$ has branch points at $\lambda_1$ and
$\lambda_2$. On the other hand, if $a<1$, then $\xi_3(z)$ has branch
points at $\lambda_1$ and $\lambda_2$ while $\xi_2(z)$ has branch
points at $\lambda_3$ and $\lambda_4$.
\end{proposition}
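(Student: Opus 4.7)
The plan is to identify each branch $\xi_j$ with a connected region in the $\xi$-plane and then use Lemma \ref{le:singu} to determine which branch points $\gamma_k$ (equivalently, which $\lambda_k$) lie on the boundary of that region. I will treat the case $a>1$ in detail; the case $a<1$ is entirely analogous with the roles of $-1$ and $-1/a$ exchanged.

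The sheets of $\Lie$ are distinguished by their asymptotic values at infinity, namely $\xi_1\to 0$, $\xi_2\to -1$, $\xi_3\to -1/a$ by (\ref{eq:xiinfty}). In particular, $\xi_2$ is the branch whose sheet contains the pole of $z(\xi)$ at $\xi=-1$, and $\xi_3$ the one containing the pole at $\xi=-1/a$. For $a>1$, Lemma \ref{le:singu} places $-1\in[\gamma_1,\gamma_2]$ and $-1/a\in[\gamma_3,\gamma_4]$. Combined with the sign analysis of $z'(\xi)$ recorded around (\ref{eq:zsing}), this gives that $z(\xi)$ is a strictly decreasing bijection from $(-1,\gamma_2]$ onto $[\lambda_2,+\infty)$ and from $[\gamma_1,-1)$ onto $(-\infty,\lambda_1]$; likewise $(-1/a,\gamma_4]$ maps bijectively onto $[\lambda_4,+\infty)$ and $[\gamma_3,-1/a)$ onto $(-\infty,\lambda_3]$.

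To turn this into a proof of the branch-point statement, I would follow $\xi_3(z)$ on the real half-line $(\lambda_4,+\infty)$, where all three roots of (\ref{eq:curve2}) are real and distinct since $D_3(z)>0$ outside the support by (\ref{eq:realim}). From (\ref{eq:xiinfty}), $\xi_3(z)=-1/a+c\beta/z+O(z^{-2})>-1/a$ for $z$ large positive; since $\xi_3(z)=-1/a$ forces $z=\infty$, continuity implies $\xi_3(z)\in(-1/a,\gamma_4)$ for all $z\in(\lambda_4,+\infty)$, using the bijection above. Letting $z\downarrow\lambda_4$ then gives $\xi_3(\lambda_4)=\gamma_4$, and Lemma \ref{le:bp} promotes this to $\xi_3(\lambda_3)=\gamma_3$. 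The same argument applied to $\xi_2(z)=-1+c(1-\beta)/z+O(z^{-2})$, using the bijection $(-1,\gamma_2]\to[\lambda_2,+\infty)$ and the real interval $(\lambda_2,\lambda_3)$ (where $D_3>0$ again), yields $\xi_2(\lambda_2)=\gamma_2$, and hence $\xi_2(\lambda_1)=\gamma_1$ by Lemma \ref{le:bp}.

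The one subtlety I would flag is that following $\xi_2$ from the half-line $(\lambda_4,+\infty)$ to the interval $(\lambda_2,\lambda_3)$ requires bypassing the support component $[\lambda_3,\lambda_4]$, across which two of the three roots become complex conjugates; however, analyticity of $\xi_2$ in $\mathbb{C}^+$ and continuity of its boundary values on $\mathbb{R}\setminus\{0\}$ settle this, since the real value of $\xi_2$ on each piece of $\mathbb{R}\setminus\mathrm{supp}(\underline{F})$ is determined uniquely as the preimage in the appropriate component of $[\gamma_1,-1)\cup(-1,\gamma_2]$ under the bijection above. For $a<1$, the same scheme applies with Lemma \ref{le:singu} instead placing $-1/a\in[\gamma_1,\gamma_2]$ and $-1\in[\gamma_3,\gamma_4]$, which swaps the roles of $\xi_2$ and $\xi_3$ and produces the second half of the proposition.
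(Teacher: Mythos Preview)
Your argument is correct, but the route differs from the paper's. The paper never invokes Lemma~\ref{le:singu} here; instead it uses a pure root-ordering argument. For $a>1$ and $z$ large positive, the asymptotics (\ref{eq:xiinfty}) give $\xi_1(z)>\xi_3(z)>\xi_2(z)$; since no collision can occur on $(\lambda_4,\infty)$, this ordering persists down to $\lambda_4$, and Proposition~\ref{pro:branchp} forces $\xi_1(\lambda_4)=\xi_3(\lambda_4)=\gamma_4$ (the neighbour of $\xi_1$ in the ordering). For the left endpoint the paper looks at $z\to 0^+$: there $\xi_1(z)\to-\infty$ by (\ref{eq:xizero}), and the relative order $\xi_3>\xi_2$ is read off by tracking these two analytic branches along $(-\infty,\lambda_1)$ from $z=-\infty$, yielding $\xi_3>\xi_2>\xi_1$ near $0^+$ and hence $\xi_1(\lambda_1)=\xi_2(\lambda_1)$. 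Lemma~\ref{le:bp} then propagates both conclusions to $\lambda_3$ and $\lambda_2$.

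Your approach instead identifies each branch with a $\xi$-interval via the monotone pieces of $z(\xi)$, using Lemma~\ref{le:singu} to pin down which interval contains each pole of $z(\xi)$. This is more geometric and yields slightly more information (the full range $\xi_j((\lambda_4,\infty))$, etc.), at the cost of having to justify the passage of $\xi_2$ across $[\lambda_3,\lambda_4]$---which you handle, though somewhat tersely (the clean statement is that $\xi_2$ is the unique real root on $[\lambda_3,\lambda_4]$ once $\xi_1,\xi_3$ are known to be the complex pair there, and it cannot exit $(-1,\gamma_2)$ without hitting an endpoint, which is impossible for $z>\lambda_2$). The paper's ordering argument sidesteps this by working from $z=0^+$ rather than following $\xi_2$ down from $+\infty$, so it never needs to cross the other support interval.
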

\begin{proof} We will prove the statement for the case $a>1$, the
case $a<1$ can be proven by similar argument. Suppose $a>1$, then
for large enough $z_0>\lambda_4$, all the functions $\xi_j$ are real
for $z\in\mathbb{R}$ and we have $\xi_1(z_0)>\xi_3(z_0)>\xi_2(z_0)$
by (\ref{eq:xiinfty}). This ordering must preserve at $\lambda_4$ as
the roots cannot coincide between $z_0$ and $\lambda_4$. At
$\lambda_4$, one of the roots must coincide with $\xi_1$ and from
the ordering $\xi_1(z_0)>\xi_3(z_0)>\xi_2(z_0)$, we must have
$\xi_1(\lambda_4)=\xi_3(\lambda_4)=\gamma_4$. By lemma \ref{le:bp},
we also have $\xi_3(\lambda_3)=\gamma_3$. On the other hand, for
small enough $\lambda_1>\epsilon>0$, all three $\xi_j$ will be real.
From the asymptotic behavior of $\xi_1(z)$ at $0$ (\ref{eq:xizero})
we see that for small enough $\epsilon$, $\xi_1(\epsilon)$ will be
smaller than both $\xi_2(\epsilon)$ and $\xi_3(\epsilon)$. From the
asymptotic behavior of $\xi_2$ and $\xi_3$ at $z=-\infty$
(\ref{eq:xiinfty}) and the fact that these 2 functions has no
singularity and cannot coincide on $(-\infty, \lambda_1)$, we see
that at $z=\epsilon$, we must have
$\xi_3(\epsilon)>\xi_2(\epsilon)$. Therefore we have
$\xi_3(\epsilon)>\xi_2(\epsilon)>\xi_1(\epsilon)$. This ordering
must again be preserved at $\lambda_1$. Therefore we must have
$\xi_1(\lambda_1)=\xi_2(\lambda_1)=\gamma_1$. By lemma \ref{le:bp},
we also have $\xi_2(\lambda_2)=\gamma_2$. This shows that for $a>1$,
the branch points of $\xi_3$ are $\lambda_3$ and $\lambda_4$ while
the branch points of $\xi_2$ are $\lambda_1$ and $\lambda_2$. The
proof for the case $a<1$ follows from similar argument.
\end{proof}
We will now define the branch cuts of the function $\xi_1(z)$ to be
$[\lambda_1,\lambda_2]\cup[\lambda_3,\lambda_4]$ and the branch cut
for $\xi_2(z)$, $\xi_3(z)$ to be $[\lambda_1,\lambda_2]$ and
$[\lambda_3,\lambda_4]$ respectively for $a>1$ and
$[\lambda_3,\lambda_4]$, $[\lambda_1,\lambda_2]$ respectively for
$a<1$. We then have the following relations between the $\xi_j$ on
the branch cuts
\begin{equation}\label{eq:bound}
\begin{split}
&1. \quad a>1,\quad \xi_{1,\pm}(z)=\left\{
               \begin{array}{ll}
                 \xi_{2,\mp}(z), & \hbox{$z\in[\lambda_1,\lambda_2]$;} \\
                 \xi_{3,\mp}(z), & \hbox{$z\in[\lambda_3,\lambda_4]$.}
               \end{array}
             \right.\\
&2. \quad a<1,\quad \xi_{1,\pm}(z)=\left\{
               \begin{array}{ll}
                 \xi_{2,\mp}(z), & \hbox{$z\in[\lambda_3,\lambda_4]$;} \\
                 \xi_{3,\mp}(z), & \hbox{$z\in[\lambda_1,\lambda_2]$.}
               \end{array}
             \right.
\end{split}
\end{equation}
where $\xi_{j,\pm}(z)$ indicates the boundary values of $\xi_j$ on
the $\pm$ sides of the branch cuts.

The branch cut structure of the Riemann surface $\Lie$ is indicated
in Figure \ref{fig:sheets}.
\begin{figure}
\centering
\includegraphics[scale=0.75]{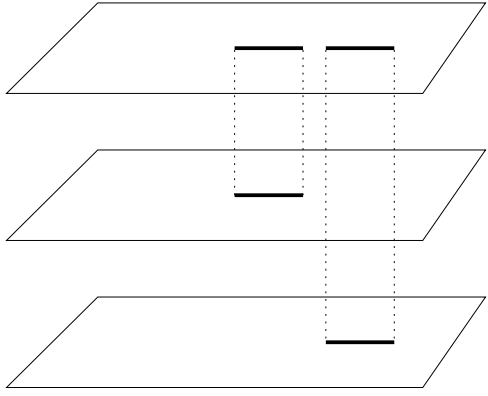} \caption{The branch cut
structure of the Riemann surface $\Lie$ when
$a>1$.}\label{fig:sheets}
\end{figure}

We will now define the functions $\theta_j(z)$ to be the the
integrals of $\xi_j(z)$.
\begin{equation}\label{eq:theta}
\begin{split}
\theta_1(z)&=\int_{\lambda_4}^z\xi_1(x)dx,\quad
\theta_2(z)=\int_{\lambda_{k_2}}^z\xi_2(x)dx,\quad
\theta_3(z)=\int_{\lambda_{k_3}}^z\xi_3(x)dx,\\
k_2&=2,\quad k_3=4,\quad a>1,\quad k_2=4,\quad k_3=2,\quad a<1.
\end{split}
\end{equation}
where the integration paths of the above integrals are chosen such
that they do not intersect the real axis, except perhaps at the end
points.

Then from (\ref{eq:xiinfty}), (\ref{eq:zeroasym}) and
(\ref{eq:xizero}), we see that the integrals (\ref{eq:theta}) have
the following behavior at $z=\infty$ and $z=0$.
\begin{equation}\label{eq:asymtheta}
\begin{split}
\theta_1(z)&=-\log z+l_1+O\left(z^{-1}\right),\quad
z\rightarrow\infty,\\
\theta_1(z)&=-(1-c)\log z+O\left(1\right),\quad
z\rightarrow 0,\\
\theta_2(z)&=-z+c(1-\beta)\log z+l_2+O\left(z^{-1}\right),\quad
z\rightarrow\infty,\\
\theta_2(z)&=O\left(1\right),\quad
z\rightarrow 0,\\
\theta_3(z)&=-\frac{z}{a}+c\beta\log
z+l_3+O\left(z^{-1}\right),\quad
z\rightarrow\infty,\\
\theta_3(z)&=O\left(1\right),\quad z\rightarrow 0.
\end{split}
\end{equation}
for some constants $l_1$, $l_2$ and $l_3$.

From the behavior of $\xi_j(z)$ on the cuts, we have the following
analyticity properties of the $\theta_j(z)$.
\begin{lemma}\label{le:cuttheta}
The integral $\theta_1(z)$ is analytic on
$\mathbb{C}\setminus(-\infty,\lambda_4]$ and continuous up to
$\mathbb{R}\setminus\{0\}$. The integrals $\theta_2(z)$ and
$\theta_3(z)$ are analytic on
$\mathbb{C}\setminus(-\infty,\lambda_{k_2}]$ and
$\mathbb{C}\setminus(-\infty,\lambda_{k_3}]$ respectively and are
continuous up to $\mathbb{R}$. Across the real axis, they have the
following jump discontinuities.
\begin{equation}\label{eq:cuttheta}
\begin{split}
\theta_{1,\pm}(z)&=\theta_{j,\mp}(z)+\theta_{1,\pm}(\lambda_{k_j}),\quad z\in [\lambda_{{k_j}-1},\lambda_{k_j}],\quad j=1,2,\\
\theta_{1,+}(z)&=\theta_{1,-}(z)-2c\beta\pi i,\quad z\in
[\lambda_2,\lambda_3],\quad a>1,\\
\theta_{1,+}(z)&=\theta_{1,-}(z)-2c(1-\beta)\pi i,\quad z\in
[\lambda_2,\lambda_3],\quad a<1,\\
\theta_{1,+}(z)&=\theta_{1,-}(z)-2c\pi i,\quad z\in
(0,\lambda_1],\\
\theta_{1,+}(z)&=\theta_{1,-}(z)-2\pi i,\quad z\in
(-\infty,0],\\
\theta_{2,+}(z)&=\theta_{2,-}(z)+2c(1-\beta)\pi i,\quad z\in
(-\infty,\lambda_{{k_2}-1}],\\
\theta_{3,+}(z)&=\theta_{3,-}(z)+2c\beta\pi i,\quad z\in
(-\infty,\lambda_{k_3-1}].
\end{split}
\end{equation}
\end{lemma}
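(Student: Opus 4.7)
The plan is to split the proof into three parts matching the three kinds of assertion in the statement: analyticity on the stated cut domain, the additive sheet-exchange identities on the gluing cuts $[\lambda_{k_j-1},\lambda_{k_j}]$, and the constant-jump identities on the complementary portions of $(-\infty,\lambda_{k_j}]$.

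For the first part, each branch $\xi_j$ is single-valued and analytic on $\mathbb{C}\setminus(-\infty,\lambda_{k_j}]$, since its only singularities on the Riemann surface $\Lie$ (namely its branch cut by Proposition~\ref{pro:sheet}, plus the simple pole of $\xi_1$ at $0$ coming from (\ref{eq:xizero})) all lie in $(-\infty,\lambda_{k_j}]$. As the complement of that ray is simply connected, $\theta_j(z)=\int_{\lambda_{k_j}}^{z}\xi_j(x)\,dx$ is path-independent and analytic there, and continuity up to $\mathbb{R}\setminus\{0\}$ follows from local integrability of $\xi_j$ at the branch endpoints; the pole of $\xi_1$ at $0$ produces the expected $-(1-c)\log z$ singularity and accounts for the exclusion of $0$.

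For the sheet-exchange identities, fix $z\in[\lambda_{k_j-1},\lambda_{k_j}]$ and deform the paths defining $\theta_{1,\pm}(z)$ to the real segment just above and just below the cut. Writing
\[
\theta_{1,\pm}(z)=\theta_{1,\pm}(\lambda_{k_j})+\int_{\lambda_{k_j}}^{z}\xi_{1,\pm}(x)\,dx
\]
and substituting the boundary values $\xi_{1,\pm}=\xi_{j,\mp}$ from (\ref{eq:bound}), the remaining integral becomes $\theta_{j,\mp}(z)$, since $\theta_j$ vanishes at its base point $\lambda_{k_j}$. This delivers the first line of (\ref{eq:cuttheta}) uniformly in the two cases $a>1$ and $a<1$, because the index convention $k_2,k_3$ is exactly what makes $\xi_{1,\pm}=\xi_{j,\mp}$ hold on $[\lambda_{k_j-1},\lambda_{k_j}]$ in both cases.

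For the constant jumps, on any interval where the integrand is already analytic across $\mathbb{R}$ the difference $\theta_{j,+}(z)-\theta_{j,-}(z)$ equals the integral of $\xi_j$ over the closed loop that starts at $\lambda_{k_j}$, runs to $z$ through $\mathbb{C}^+$ and returns through $\mathbb{C}^-$; this loop is traced counter-clockwise around the real segment $[z,\lambda_{k_j}]$ and, by contour deformation, picks up only the residue of $\xi_j$ at $0$ (present only for $j=1$ and only when $z<0$) together with the periods of $\xi_j$ around whichever branch cuts lie in $(z,\lambda_{k_j})$. The residue at $0$ is $-(1-c)$ by (\ref{eq:xizero}), and evaluating $\oint_{|z|=R}\xi_j\,dz$ via the expansions (\ref{eq:xiinfty}) identifies the period of $\xi_2$ around its cut as $2\pi i c(1-\beta)$ and the period of $\xi_3$ around its cut as $2\pi i c\beta$. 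Vieta's formula applied to (\ref{eq:curve2}) gives $\xi_1+\xi_2+\xi_3=-A_2/a-B_2/(az)$, a meromorphic function whose only singularity is the simple pole at $0$, so its periods around every branch cut vanish; since, by Proposition~\ref{pro:sheet}, exactly one of $\xi_2,\xi_3$ shares each cut with $\xi_1$, the period of $\xi_1$ around a cut equals the negative of the period of the matching $\xi_j$. Adding the right combination of these periods and the residue at $0$ over the singularities enclosed by each loop recovers the six remaining constants in (\ref{eq:cuttheta}). The main obstacle is purely one of bookkeeping: one must verify that the constructed contour is counter-clockwise around $[z,\lambda_{k_j}]$, track which of $\xi_2,\xi_3$ glues to $\xi_1$ on each cut (which flips with the sign of $a-1$), and correctly pass signs through Vieta when converting the easy periods at infinity into the periods of $\xi_1$; once this is done, the case $a<1$ reproduces the case $a>1$ under the interchange $\xi_2\leftrightarrow\xi_3$ and $\beta\leftrightarrow 1-\beta$ dictated by $k_2=4,k_3=2$.
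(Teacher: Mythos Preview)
Your proof is correct and follows essentially the same route as the paper: both arguments derive the sheet-exchange identity on $[\lambda_{k_j-1},\lambda_{k_j}]$ directly from (\ref{eq:bound}), and both compute the constant jumps of $\theta_2,\theta_3$ by closing the defining contour and reading off the residue at infinity via (\ref{eq:xiinfty}). The one genuine (minor) difference is in how you obtain the periods of $\xi_1$ around the two cuts: the paper shrinks the contour onto the cut and uses the gluing relation $\xi_{1,\pm}=\xi_{j,\mp}$ to identify $\oint\xi_1$ with $-\oint\xi_j$ (and hence with the mass formulas (\ref{eq:thetamass})), whereas you invoke Vieta to observe that $\xi_1+\xi_2+\xi_3$ is single-valued and conclude the same cancellation. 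These are two phrasings of the same fact, and either yields the remaining constants after the straightforward bookkeeping you describe.
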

\begin{proof} From the jump discontinuities of $\xi_j(z)$ in
(\ref{eq:bound}), we see that
\begin{equation*}
\begin{split}
\left(\int_{\lambda_{k_j}}^z\xi_{1}(x)dx\right)_{\pm}&=\theta_{j,\mp}(z),\quad z\in [\lambda_{{k_j}-1},\lambda_{k_j}],\quad j=2,3,\\
\end{split}
\end{equation*}
By comparing this and (\ref{eq:theta}), we obtain the first equation
in (\ref{eq:cuttheta}).

Let us now show that
\begin{equation}\label{eq:theta2}
\theta_{2,+}(z)=\theta_{2,-}(z)+2c(1-\beta)\pi i,\quad z\in
(-\infty,\lambda_{{k_2}-1}].
\end{equation}
The corresponding equation for $\theta_1(z)$ and $\theta_3(z)$ can
be proven in a similar way.

Let $z\in(-\infty,\lambda_{{k_2}-1}]$ and let $\Gamma_{\pm}$ be
contours from $\lambda_{{k_2}}$ to $z$ such that $\Gamma_{\pm}$ are
in the upper and lower half planes respectively. Then we have
\begin{equation*}
\theta_{2,\pm}(z)=\int_{\Gamma_{\pm}}\xi_2(x)dx.
\end{equation*}
Let $\Gamma$ be the close loop on $\mathbb{C}$ such that
$\Gamma=\Gamma_+-\Gamma_-$, then we have
\begin{equation*}
\theta_{2,+}(z)=\theta_{2,-}(z)+\oint_{\Gamma}\xi_2(x)dx,\quad z\in
(-\infty,\lambda_{{k_2}-1}].
\end{equation*}
By Cauchy's theorem, we can deform the loop $\Gamma$ such that
$\Gamma$ becomes a loop around $z=\infty$. By computing the residue,
we arrived at (\ref{eq:theta2}).

On the other hand, we can also deform $\Gamma$ so that it becomes a
loop enclosing the branch cut $[\lambda_{{k_2}-1},\lambda_{k_2}]$ of
$\xi_2(z)$. Then we have
\begin{equation}\label{eq:theta2den}
\oint_{\Gamma}\xi_2(x)dx=-\int_{\lambda_{{k_2}-1}}^{\lambda_{k_2}}\left(\xi_{2,+}(x)-\xi_{2,-}(x)\right)dx.
\end{equation}
From (\ref{eq:bound}), we have $\xi_{2,-}(z)=\xi_{1,+}(z)$. Since
$\xi_{2}(z)$ and $\xi_1(z)$ are the 2 complex roots of
(\ref{eq:curve2}) on $[\lambda_{k_2-1},\lambda_{k_2}]$, we have
\begin{equation*}
\xi_{2,+}(x)-\xi_{2,-}(x)=\xi_{1,-}(x)-\xi_{1,+}(x)=-2i\mathrm{Im}\left(\xi_{1,+}(x)\right),\quad
x\in[\lambda_{{k_2}-1},\lambda_{{k_2}}].
\end{equation*}
By comparing (\ref{eq:theta2den}) and (\ref{eq:theta2}), we obtain
\begin{equation}\label{eq:theta2mass}
\int_{\lambda_{{k_2}-1}}^{\lambda_{{k_2}}}\mathrm{Im}\left(\xi_{1,+}(x)\right)dx=c(1-\beta)\pi.
\end{equation}
By using similar argument, we see that
\begin{equation}\label{eq:theta3mass}
\begin{split}
&\theta_{3,+}(z)=\theta_{3,-}(z)+2c\beta\pi i,\quad z\in
(-\infty,\lambda_{k_3-1}].\\
&\int_{\lambda_{k_3-1}}^{\lambda_{k_3}}\mathrm{Im}\left(\xi_{1,+}(x)\right)dx=c\beta\pi,\\
&\theta_{1,+}(z)=\theta_{1,-}(z)-2i\int_{\lambda_{3}}^{\lambda_{4}}\mathrm{Im}\left(\xi_{1,+}(x)\right)dx
,\quad z\in [\lambda_2,\lambda_3],\\
&\theta_{1,+}(z)=\theta_{1,-}(z)-2c\pi i,\quad z\in
(0,\lambda_1],\\
&\theta_{1,+}(z)=\theta_{1,-}(z)-2\pi i,\quad z\in (-\infty,0].
\end{split}
\end{equation}
From these, we have
\begin{equation}\label{eq:thetamass}
\begin{split}
\int_{\lambda_{{k_2}-1}}^{\lambda_{{k_2}}}\mathrm{Im}\left(\xi_{1,+}(x)\right)dx=c(1-\beta)\pi,\quad
\int_{\lambda_{k_3-1}}^{\lambda_{k_3}}\mathrm{Im}\left(\xi_{1,+}(x)\right)dx=c\beta\pi.
\end{split}
\end{equation}
and hence
\begin{equation*}
\begin{split}
\theta_{1,+}(z)&=\theta_{1,-}(z)-2i\int_{\lambda_{3}}^{\lambda_{4}}\mathrm{Im}\left(\xi_{1,+}(x)\right)dx\\
&=\theta_{1,-}(z)-\left\{
                    \begin{array}{ll}
                      2c\beta\pi i, & \hbox{$a>1$;} \\
                      2c(1-\beta)\pi i, & \hbox{$a<1$.}
                    \end{array}
                  \right.
\end{split}
\end{equation*}
for $z\in[\lambda_2,\lambda_3]$. This proves the lemma.
\end{proof}
From the proof of this lemma, we see that the mass of
$\underline{F}$ is distributed between $[\lambda_1,\lambda_2]$ and
$[\lambda_3,\lambda_4]$ in the following way.
\begin{corollary}\label{cor:mass}
Let $k_2=2$, $k_3=4$ for $a>1$ and $k_2=4$, $k_3=2$ for $a<1$. Then
the mass of the measure $\underline{F}$ is distributed between
$[\lambda_1,\lambda_2]$ and $[\lambda_3,\lambda_4]$ in the following
way.
\begin{equation}\label{eq:mass}
\underline{F}\left([\lambda_{{k_2}-1},\lambda_{{k_2}}]\right)=c(1-\beta),\quad
\underline{F}\left([\lambda_{k_3-1},\lambda_{k_3}]\right)=c\beta.
\end{equation}
\end{corollary}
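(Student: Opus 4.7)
The key observation is that nearly all the work for this corollary has already been carried out inside the proof of Lemma \ref{le:cuttheta}. My plan is therefore to combine the identification $\xi_1(z) = m_{\underline{F}}(z)$ from (\ref{eq:stiexi}) with the Stieltjes inversion formula (\ref{eq:inver}) and read off the mass directly from the already-computed integrals (\ref{eq:theta2mass}) and (\ref{eq:theta3mass}).

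First, I would recall that by Proposition \ref{pro:branchp} and (\ref{eq:stiexi}), the Stieltjes transform $m_{\underline{F}}(z)$ of $\underline{F}$ coincides with $\xi_1(z)$ on $\mathbb{C}^+$, and its boundary value from above on the support of $\underline{F}$ is $\xi_{1,+}(x)$. By Theorem \ref{thm:cuts} and Lemma \ref{le:order}, the support of $\underline{F}$ restricted to $\mathbb{R}_+$ is $[\lambda_1,\lambda_2]\cup[\lambda_3,\lambda_4]$, so the inversion formula (\ref{eq:inver}) gives
\begin{equation*}
\underline{F}\left([\lambda_{k_j-1},\lambda_{k_j}]\right) = \frac{1}{\pi}\int_{\lambda_{k_j-1}}^{\lambda_{k_j}} \mathrm{Im}\left(\xi_{1,+}(x)\right)\, dx, \qquad j=2,3.
\end{equation*}

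Then I would simply invoke (\ref{eq:theta2mass}) and the first line of (\ref{eq:thetamass}), which were derived during the proof of Lemma \ref{le:cuttheta} by deforming the loop $\Gamma$ around a branch cut of $\xi_2$ (respectively $\xi_3$) and using the relation $\xi_{2,-}(x)=\xi_{1,+}(x)$ (respectively $\xi_{3,-}(x)=\xi_{1,+}(x)$) from (\ref{eq:bound}) together with the residue computation at $z=\infty$ using the asymptotics (\ref{eq:xiinfty}). These identities state
\begin{equation*}
\int_{\lambda_{k_2-1}}^{\lambda_{k_2}}\mathrm{Im}\left(\xi_{1,+}(x)\right)dx = c(1-\beta)\pi, \qquad \int_{\lambda_{k_3-1}}^{\lambda_{k_3}}\mathrm{Im}\left(\xi_{1,+}(x)\right)dx = c\beta\pi.
\end{equation*}
Dividing by $\pi$ yields exactly (\ref{eq:mass}).

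There is essentially no obstacle here: this corollary is a direct rephrasing of the integral identities in the proof of Lemma \ref{le:cuttheta} in the language of the measure $\underline{F}$. The only point worth making explicit is the consistency check that the total mass $c(1-\beta)+c\beta=c$ matches the identity (\ref{eq:Fline}), since $F$ has unit mass and $\underline{F}=(1-c)I_{[0,\infty)}+cF$ must contribute mass $c$ on the support of $F\subset\mathbb{R}_+\setminus\{0\}$; this provides a sanity check that the $c(1-\beta)$/$c\beta$ split between the two cut intervals is indeed normalized correctly.
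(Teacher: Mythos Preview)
Your proposal is correct and takes essentially the same approach as the paper: the corollary is stated immediately after Lemma \ref{le:cuttheta} with the remark that it follows from that proof, and you have spelled out precisely how the integral identities (\ref{eq:theta2mass}) and (\ref{eq:thetamass}) combine with the inversion formula and the identification $\xi_1=m_{\underline F}$ to give (\ref{eq:mass}). Your added consistency check with (\ref{eq:Fline}) is a nice touch but not needed for the argument.
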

We will conclude this section with the following results on the
relative sizes of the $\mathrm{Re}\left(\theta_j(z)\right)$, which
are essential in the implementation of the Riemann-Hilbert method.
\begin{lemma}\label{le:size}
The real parts of $\theta_j(z)$ are continuous across $\mathbb{R}$.
Let us denote the real parts by $\mathrm{Re}(\theta_j(z))$. Then we
have the following
\begin{equation}\label{eq:size}
\begin{split}
\mathrm{Re}(\theta_1(z))&-\mathrm{Re}(\theta_1(\lambda_{k_j}))>\mathrm{Re}(\theta_j(z)),\quad
x\in\mathbb{R}_+\setminus[\lambda_{{k_j}-1},\lambda_{k_j}] ,\quad
j=1,2.
\end{split}
\end{equation}
\end{lemma}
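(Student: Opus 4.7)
The first assertion is immediate from Lemma \ref{le:cuttheta}: every jump formula listed there expresses $\theta_{j,+}(z)-\theta_{j,-}(z)$ as either a purely imaginary constant (a multiple of $\pi i$) or, on a cut of $\theta_1$ itself, as $\theta_{j,-}(z)+\theta_{1,\pm}(\lambda_{k_j})$ evaluated at the same point, whose difference across the cut is again purely imaginary. Taking real parts therefore yields continuous boundary values across the whole of $\mathbb{R}\setminus\{0\}$.

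For the inequality I would introduce the auxiliary function
\[
g_j(z)\;:=\;\mathrm{Re}\,\theta_1(z)-\mathrm{Re}\,\theta_1(\lambda_{k_j})-\mathrm{Re}\,\theta_j(z),\qquad z\in\mathbb{R}_+,
\]
and prove that it vanishes on the cut $[\lambda_{k_j-1},\lambda_{k_j}]$ and is strictly positive on its complement. The vanishing on the cut is free: the relation $\theta_{1,\pm}(z)=\theta_{j,\mp}(z)+\theta_{1,\pm}(\lambda_{k_j})$ of Lemma \ref{le:cuttheta}, together with the continuity of real parts just established, forces $g_j\equiv 0$ on $[\lambda_{k_j-1},\lambda_{k_j}]$. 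In particular $g_j(\lambda_{k_j-1})=g_j(\lambda_{k_j})=0$, so it suffices to control the sign of $g_j'$ on each connected piece of $\mathbb{R}_+\setminus[\lambda_{k_j-1},\lambda_{k_j}]$.

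Off the support $g_j'(z)$ equals $\xi_1(z)-\xi_j(z)$ in the real sense, and on the companion support interval $[\lambda_{k_{j'}-1},\lambda_{k_{j'}}]$ (where $\xi_1$ is the complex conjugate of $\xi_{j'}$ while $\xi_j$ stays real) it equals $\mathrm{Re}\,\xi_1(z)-\xi_j(z)$. The asymptotic data of (\ref{eq:xiinfty}) and (\ref{eq:xizero}) pin down both limiting regimes: as $z\to 0^+$ one has $\xi_1(z)\to -\infty$ while $\xi_2(z),\xi_3(z)$ tend to the finite roots $R_1,R_2$ of (\ref{eq:quad}), giving $\xi_1-\xi_j<0$ on $(0,\lambda_1)$; as $z\to +\infty$, $\xi_1(z)\to 0^-$, $\xi_2(z)\to -1$, $\xi_3(z)\to -1/a$, giving $\xi_1-\xi_j>0$ there. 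Because distinct real branches of (\ref{eq:curve2}) can only meet at the branch points $\lambda_k$, the sign of $\xi_1-\xi_j$ is preserved by continuity on every open real subinterval on which all three branches are real.

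The one non-routine point is the companion cut, where $\xi_1$ fails to be real. Here I would use Vieta's formula for (\ref{eq:curve2}),
\[
\xi_1(z)+\xi_2(z)+\xi_3(z)\;=\;-\frac{A_2}{a}-\frac{B_2}{a z},
\]
together with $\xi_{j'}=\overline{\xi_1}$ on $[\lambda_{k_{j'}-1},\lambda_{k_{j'}}]$, to obtain the explicit identity
\[
g_j'(z)\;=\;\mathrm{Re}\,\xi_1(z)-\xi_j(z)\;=\;-\frac{A_2}{2a}-\frac{B_2}{2az}-\frac{3}{2}\xi_j(z).
\]
This expression depends only on the single \emph{real} branch $\xi_j(z)$, and at the two endpoints $\lambda_{k_{j'}-1},\lambda_{k_{j'}}$ it matches by continuity the strictly positive value of $\xi_1-\xi_j$ coming from the adjacent real regions (since at those endpoints $\mathrm{Re}\,\xi_1=\xi_{j'}$, and the ordering of the three real branches coming from $\pm\infty$ is $\xi_1>\xi_{j'}>\xi_j$ on each side). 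Monotonicity of $\xi_j$ on this interval (it traces out part of the graph of $z(\xi)$ between the branch-point values $\gamma_{k_{j'}-1},\gamma_{k_{j'}}$) together with the sign at the endpoints then rules out any zero of $g_j'$ in the interior.

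\textbf{Main obstacle.} Steps 1--3 are essentially bookkeeping, and the asymptotic signs of $\xi_1-\xi_j$ near $0$ and $\infty$ are forced by (\ref{eq:xiinfty})--(\ref{eq:xizero}). The delicate step is the companion support interval, where one loses the direct comparison between real branches and must extract the sign of $\mathrm{Re}\,\xi_1-\xi_j$ from Vieta plus the endpoint matching; everything downstream is then just integration of $g_j'$ from $\lambda_{k_j-1}$ or $\lambda_{k_j}$, using $g_j\equiv 0$ on the cut to anchor the constant of integration.
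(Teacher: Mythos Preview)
Your overall plan is sound and takes a different route from the paper on the key step (the companion cut). The paper works with Cardano's formula: it writes $\mathrm{Re}(\xi_j-\xi_1)$ as a multiple of $\sqrt[3]{u_+}+\sqrt[3]{u_-}$ from (\ref{eq:cubsol}), assumes a zero $z_1$ with nonnegative derivative, and observes via (\ref{eq:diffreal}) that this forces $\xi_R'(z_1)>0$, contradicting the contrapositive of Lemma~\ref{le:CS}. Your Vieta formulation is more elementary and avoids the cube roots entirely.

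Two points in your sketch are not correct as written, though both are repairable.

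First, the claim that $g_j'$ matches a \emph{strictly positive} value at both endpoints of the companion cut, with ordering $\xi_1>\xi_{j'}>\xi_j$ ``on each side'', holds only when the companion cut lies to the right of $[\lambda_{k_j-1},\lambda_{k_j}]$. When it lies to the left (e.g.\ $j=3$, $a>1$, companion cut $[\lambda_1,\lambda_2]$), the asymptotics $\xi_1(z)\sim -(1-c)/z$ near $0^+$ force $\xi_1-\xi_j<0$ on $(0,\lambda_1)$ and likewise on $(\lambda_2,\lambda_3)$, so the endpoint values are negative. The argument still goes through with the sign flipped; you should phrase it as ``the same nonzero sign at both endpoints'' rather than ``positive''.

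Second, your justification for monotonicity of $\xi_j$ is off: on the companion cut $\xi_j$ is the \emph{simple} real root, so it does not take values between the double-root values $\gamma_{k_{j'}-1},\gamma_{k_{j'}}$. The clean justification is again Lemma~\ref{le:CS} (converse part): for $z$ in the open support interval the real root $\xi_j(z)$ is real, nonzero, and $\neq -1,-1/a$, so condition~3 must fail, i.e.\ $z'(\xi_j(z))<0$, hence $\xi_j$ is strictly decreasing. Finally, to pass from ``$\xi_j$ monotone'' to ``$g_j'$ has no interior zero'' you need one more sentence: in $g_j'(z)=-\frac{A_2}{2a}-\frac{B_2}{2az}-\frac{3}{2}\xi_j(z)$ the two non-constant terms are \emph{both} increasing in $z$ (since $B_2>0$ and $\xi_j$ is decreasing), so $g_j'$ itself is monotone; a monotone function with the same sign at both endpoints cannot vanish in between.
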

\begin{proof} First note that, by (\ref{eq:bound}), we see that
$\xi_{j,\pm}(z)$ are complex conjugations on
$[\lambda_1,\lambda_2]\cup[\lambda_3,\lambda_4]$. Therefore the real
part $\mathrm{Re}(\xi_j(z))$ of $\xi_j(z)$ and hence
$\mathrm{Re}(\theta_j(z))$ is continuous across the real axis.

From the proof of Proposition \ref{pro:sheet}, we have, for $j=2,3$,
\begin{equation}\label{eq:ineq0}
\begin{split}
\xi_1(z)&>\xi_j(z),\quad
z\in[\lambda_{4},\infty),\\
\xi_1(z)&<\xi_j(z),\quad z\in(0,\lambda_{1}].
\end{split}
\end{equation}
Since $\xi_1(z)$ and $\xi_j(z)$ are complex conjugate solutions on
$[\lambda_{k_j-1},\lambda_{k_j}]$ and that $\xi_1(z)$ is the branch
with positive imaginary part (recall that
$\xi_i(z)=m_{\underline{F}}(z)$ and from (\ref{eq:inver}), we see
that the imaginary part of $m_{\underline{F}}(z)$ is positive on
$\mathrm{supp}(\underline{F})$), we have, by (\ref{eq:cubsol}), the
following behavior near the branch points
\begin{equation}\label{eq:branchbe}
\begin{split}
\xi_1(z)&=\gamma_{k_j-1}+i\nu_1^j\sqrt{z-\lambda_{k_j-1}}+O\left((z-\lambda_{k_j-1})\right),\quad z\rightarrow\lambda_{k_j-1},\\
\xi_j(z)&=\gamma_{k_j-1}-i\nu_1^j\sqrt{z-\lambda_{k_j-1}}+O\left((z-\lambda_{k_j-1})\right),\quad z\rightarrow\lambda_{k_j-1},\\
\xi_1(z)&=\gamma_{k_j}+\nu_2^j\sqrt{z-\lambda_{k_j}}+O\left((z-\lambda_{k_j})\right),\quad z\rightarrow\lambda_{k_j},\\
\xi_j(z)&=\gamma_{k_j}-\nu_2^j\sqrt{z-\lambda_{k_j}}+O\left((z-\lambda_{k_j})\right),\quad
z\rightarrow\lambda_{k_j}.
\end{split}
\end{equation}
where $\nu_1^j$ and $\nu_2^j$ are real constants and the branch cut
of the square root in the first 2 equations is chosen to be the
positive real axis, with the branch chosen such that
$\nu_1^j\sqrt{z-\lambda_{k_j-1}}$ is positive on the left hand side
of the positive real axis; while the branch cut of the square root
in the last 2 equations is chosen to be the negative real axis, with
the branch chosen such that $\nu_2^j\sqrt{z-\lambda_{k_j-1}}$ is
positive on the positive real axis. From (\ref{eq:branchbe}), we see
that there exists $\delta>0$ such that
\begin{equation}\label{eq:ineq01}
\begin{split}
\mathrm{Re}\left(\xi_1(z)\right)&>\mathrm{Re}\left(\xi_j(z)\right),\quad
z\in(\lambda_{k_j},\lambda_{k_j}+\delta),\\
\mathrm{Re}\left(\xi_1(z)\right)&<\mathrm{Re}\left(\xi_j(z)\right),\quad
z\in(\lambda_{k_j-1}-\delta,\lambda_{k_j-1}).
\end{split}
\end{equation}
The ordering (\ref{eq:ineq01}) must be preserved until $z$ hits
another branch cut. Therefore from (\ref{eq:ineq0}) and
(\ref{eq:ineq01}), we have
\begin{equation}\label{eq:ineq}
\begin{split}
\mathrm{Re}\left(\xi_1(z)\right)&>\mathrm{Re}\left(\xi_j(z)\right),\quad
z\in[\lambda_{k_j},\infty)\setminus\mathrm{supp}(\underline{F}),\\
\mathrm{Re}\left(\xi_1(z)\right)&<\mathrm{Re}\left(\xi_j(z)\right),\quad
z\in(0,\lambda_{{k_j}-1}]\setminus\mathrm{supp}(\underline{F}).
\end{split}
\end{equation}
We will now show that these inequalities hold for the whole
intervals $(0,\lambda_{k_j-1}]$ and $[\lambda_{k_j},\infty)$. Let
$l\neq j$. Suppose either of the following happens in
$[\lambda_{k_l-1},\lambda_{k_l}]$,
\begin{equation}\label{eq:vio}
\begin{split}
\mathrm{Re}\left(\xi_j(z)-\xi_1(z)\right)&>0,\quad \textrm{if
$\lambda_{k_l}>\lambda_{k_j}$},\\
\mathrm{Re}\left(\xi_j(z)-\xi_1(z)\right)&<0,\quad \textrm{if
$\lambda_{k_l}<\lambda_{k_j}$}.
\end{split}
\end{equation}
Then by (\ref{eq:ineq}), there must be a point
$z_1\in[\lambda_{k_l-1},\lambda_{k_l}]$ such that
\begin{equation}\label{eq:equal}
\begin{split}
\mathrm{Re}(\xi_j(z_1))=\mathrm{Re}(\xi_1(z_1)),\\
\frac{d}{dz}\left(\mathrm{Re}\left(\xi_j(z_1)-\xi_1(z_1)\right)\right)\geq
0.
\end{split}
\end{equation}
The values of the $\xi(z)$ at the point $z_1$ are given by
(\ref{eq:cubsol}) with $\xi_j(z)=\xi_R(z)$ while $\xi_1(z)$ is one
of the complex roots $\xi_{I_1}(z)$ or $\xi_{I_2}(z)$. Taking the
difference between the real parts of $\xi_R(z)$ and $\xi_{I_1}(z)$
(or $\xi_{I_2}(z)$ which has the same real part as $\xi_{I_1}(z)$),
we have
\begin{equation}\label{eq:realdiff}
\mathrm{Re}\left(\xi_j(z)-\xi_{1}(z)\right)=2\left(\sqrt[3]{u_+}+\sqrt[3]{u_-}\right)
\end{equation}
Since $z_1$ is a point on $\mathrm{supp}(\underline{F})$, from Lemma
\ref{le:CS}, we see that the derivative $\xi_R^{\prime}(z_1)$ of the
real root at $z_1$ must be non-positive. By (\ref{eq:cubsol}), this
derivative is given by
\begin{equation}\label{eq:diffreal}
\xi_{R}^{\prime}(z_0)=\frac{1}{3}\frac{B_2}{az_1^2}+\frac{d}{dz}\left(\sqrt[3]{u_+}+\sqrt[3]{u_-}\right).
\end{equation}
By (\ref{eq:realdiff}) and (\ref{eq:diffreal}), we see that if
$\frac{d}{dz}\left(\mathrm{Re}\left(\xi_j(z_1)-\xi_1(z_1)\right)\right)\geq
0$, then (\ref{eq:diffreal}) will be positive, this contradicts
Lemma \ref{le:CS} and hence
$\mathrm{Re}\left(\xi_j(z)-\xi_{1}(z)\right)$ cannot vanish in
$[\lambda_{k_l-1},\lambda_{k_l}]$. This, together with
(\ref{eq:ineq}) implies
\begin{equation}\label{eq:ineq2}
\begin{split}
\mathrm{Re}\left(\xi_1(z)\right)&>\mathrm{Re}\left(\xi_j(z)\right),\quad
z\in[\lambda_{k_j},\infty),\\
\mathrm{Re}\left(\xi_1(z)\right)&<\mathrm{Re}\left(\xi_j(z)\right),\quad
z\in(0,\lambda_{{k_j}-1}].
\end{split}
\end{equation}
Therefore we have
\begin{equation*}
\begin{split}
&\int_{\lambda_{k_j}}^z\mathrm{Re}\left(\xi_1(x)-\xi_j(x)\right)dx>0,\quad
z>\lambda_{k_j},\\
&\int_{\lambda_{{k_j}-1}}^z\mathrm{Re}\left(\xi_1(x)-\xi_j(x)\right)dx>0,\quad
0<z<\lambda_{{k_j}-1},
\end{split}
\end{equation*}
Since $\xi_1(z)-\xi_j(z)$ is purely imaginary on
$[\lambda_{{k_j}-1},\lambda_{k_j}]$, we have
\begin{equation}\label{eq:ima}
\int_{\lambda_{{k_j}-1}}^{\lambda_{{k_j}}}\mathrm{Re}\left(\xi_1(x)-\xi_j(x)\right)dx=0,
\end{equation}
Then from the definition (\ref{eq:theta}) of the $\theta_j(z)$, we
obtain (\ref{eq:size}).
\end{proof}
The final result in this section concerns about the behavior of
these real parts in a neighborhood of the branch cuts.
\begin{lemma}\label{le:lens}
The open interval $(\lambda_{{k_j}-1},\lambda_{k_j})$ , $j=2,3$ has
a neighborhood $U_j$ in the complex plane such that
\begin{equation}\label{eq:lens}
\mathrm{Re}(\theta_j(z))+\mathrm{Re}\left(\theta_1(\lambda_{k_j})\right)>\mathrm{Re}(\theta_1(z))>
\mathrm{Re}(\theta_l(z))+\mathrm{Re}\left(\theta_1(\lambda_{k_l})\right),
\end{equation}
where $l=2,3$ and $l\neq j$.
\end{lemma}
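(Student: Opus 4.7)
My plan is to handle the two inequalities in (\ref{eq:lens}) separately. For the first inequality, $\mathrm{Re}(\theta_j(z))+\mathrm{Re}(\theta_1(\lambda_{k_j})) > \mathrm{Re}(\theta_1(z))$, I would define $g_j(z) := \theta_1(z) - \theta_j(z) - \theta_1(\lambda_{k_j})$. The first jump relation in Lemma \ref{le:cuttheta} immediately gives $g_{j,+}(z) = \theta_{j,-}(z) - \theta_{j,+}(z)$ for $z \in (\lambda_{k_j-1},\lambda_{k_j})$; since $\xi_{1,+}(x) = \overline{\xi_{j,+}(x)}$ on this interval (they are the conjugate pair of roots of the cubic (\ref{eq:curve2}) there), this jump is purely imaginary, so $\mathrm{Re}(g_{j,+}) \equiv 0$ on the open cut.

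To detect the sign of $\mathrm{Re}(g_j)$ just off the cut, I would differentiate. On the upper bank,
\begin{equation*}
g_j'(x_+) \;=\; \xi_{1,+}(x) - \xi_{j,+}(x) \;=\; 2i\,\mathrm{Im}(\xi_{1,+}(x)) \;=\; 2\pi i\,\rho(x),
\end{equation*}
which is strictly positive imaginary on $(\lambda_{k_j-1},\lambda_{k_j})$ by the inversion formula (\ref{eq:inver}) combined with the vanishing-only-at-endpoints property recorded in Remark \ref{re:sqrt}. Writing $g_j = u+iv$ and invoking the Cauchy--Riemann identity $g_j' = \partial_y v - i\,\partial_y u$ on the upper side of the cut, I would read off $\partial_y u(x,0^+) = -2\pi\rho(x) < 0$. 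Combined with $u \equiv 0$ on the cut, this forces $u < 0$ in a thin strip just above the cut. The Schwarz-reflection symmetry $g_j(\bar z) = \overline{g_j(z)}$ (off the cuts), or a symmetric computation on the lower side, gives $u < 0$ just below as well, producing the first inequality on a two-sided neighborhood of $(\lambda_{k_j-1},\lambda_{k_j})$.

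The second inequality, $\mathrm{Re}(\theta_1(z)) > \mathrm{Re}(\theta_l(z))+\mathrm{Re}(\theta_1(\lambda_{k_l}))$, I would deduce from Lemma \ref{le:size} applied with the index $j$ replaced by $l$: for every $x \in \mathbb{R}_+ \setminus [\lambda_{k_l-1},\lambda_{k_l}]$ one has $\mathrm{Re}(\theta_1(x)) - \mathrm{Re}(\theta_1(\lambda_{k_l})) > \mathrm{Re}(\theta_l(x))$. By Corollary \ref{cor:posit} and Lemma \ref{le:order}, the open cut $(\lambda_{k_j-1},\lambda_{k_j})$ lies in $\mathbb{R}_+$ and is disjoint from $[\lambda_{k_l-1},\lambda_{k_l}]$, so this strict inequality already holds pointwise there. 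Since $\theta_l$ is analytic in a full complex neighborhood of $[\lambda_{k_j-1},\lambda_{k_j}]$ (its branch cut lies on the other interval) and $\mathrm{Re}(\theta_1)$ is continuous across $[\lambda_{k_j-1},\lambda_{k_j}]$ as observed in Lemma \ref{le:size}, a compactness argument on each closed subinterval $[\lambda_{k_j-1}+\delta,\lambda_{k_j}-\delta]$ extends the inequality to a complex neighborhood that may pinch near the endpoints.

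Taking $U_j$ to be the intersection of the two neighborhoods above yields the claimed set. I expect the main obstacle to be the Cauchy--Riemann step: one must first recognize that $\mathrm{Re}(g_j)$ vanishes identically on the cut (so the first inequality is saturated there), after which the positivity of the density $\rho$ controls the sign of the normal derivative and produces the strict inequality as one moves off. The endpoint square-root behavior of $\rho$ from Remark \ref{re:sqrt} is harmless on the open cut, but it does force $U_j$ to narrow down to points at $\lambda_{k_j-1}$ and $\lambda_{k_j}$.
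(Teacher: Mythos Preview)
Your proof is correct and follows essentially the same approach as the paper: both arguments establish the first inequality by showing $\mathrm{Re}(g_j)\equiv 0$ on the cut and then using the Cauchy--Riemann equations together with the positivity of $\rho$ to force $\mathrm{Re}(g_j)<0$ just off the cut, and both invoke Lemma~\ref{le:size} for the second inequality. You are in fact slightly more explicit than the paper in extending the second inequality from the real interval $(\lambda_{k_j-1},\lambda_{k_j})$ to a surrounding complex strip via continuity and compactness; the paper states (\ref{eq:s1}) only on the real interval and then simply asserts that (\ref{eq:lens}) follows.
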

\begin{proof}Since $\xi_1(z)-\xi_j(z)$ is purely imaginary on
$[\lambda_{{k_j}-1},\lambda_{k_j}]$, we have
\begin{equation}\label{eq:boundvalue}
\int_{\lambda_{{k_j}}}^{z}\mathrm{Re}\left(\xi_1(x)-\xi_j(x)\right)dx=\mathrm{Re}\left(\theta_1(z)-\theta_1(\lambda_{k_j})-\theta_j(z)\right)=0,\quad
z\in [\lambda_{k_j-1},\lambda_{k_j}],
\end{equation}
On the positive and negative sides of
$(\lambda_{k_j-1},\lambda_{k_j})$, the derivative of the functions
$\theta_{1,\pm}(z)-\theta_{j,\pm}(z)$ are given by
$\xi_{1,\pm}(z)-\xi_{j,\pm}(z)$ and are purely imaginary. In fact,
since $\xi_{1,+}(z)=m_{\underline{F}}$, we see that
$\xi_{1,+}(z)-\xi_{j,-}(z)=2\pi i\rho(z)$ where $\rho(z)>0$ is the
density function of $\underline{F}$. On the other hand, by the jump
discontinuities (\ref{eq:bound}), we see that
$\xi_{1,-}(z)-\xi_{j,+}(z)=-2\pi i\rho(z)$. Hence by the Cauchy
Riemann equation, the real part of
$\theta_1(z)-\theta_j(z)-\theta_1(\lambda_{k_j})$ is decreasing as
we move from $(\lambda_{k_j-1},\lambda_{k_j})$ into the upper half
plane. From (\ref{eq:boundvalue}), we see that
$\mathrm{Re}(\theta_1(z)-\theta_j(z)-\theta_1(\lambda_{k_j}))<0$ for
$z$ in the upper half plane near $(\lambda_{k_j-1},\lambda_{k_j})$.
Similarly, we also have
$\mathrm{Re}(\theta_1(z)-\theta_j(z)-\theta_1(\lambda_{k_j}))<0$ for
$z$ in the lower half plane near $(\lambda_{k_j-1},\lambda_{k_j})$.
Therefore in a neighborhood of $(\lambda_{k_j-1},\lambda_{k_j})$, we
have
\begin{equation}\label{eq:size1}
\mathrm{Re}(\theta_1(z))<\mathrm{Re}(\theta_j(z)+\theta_1(\lambda_{k_j})).
\end{equation}
Now by Lemma \ref{le:size}, we see that, if $l=2,3$ and $l\neq j$,
then we have
\begin{equation}\label{eq:s1}
\mathrm{Re}(\theta_1(z))>\mathrm{Re}(\theta_l(z)+\theta_1(\lambda_{k_l})),\quad
z\in(\lambda_{k_j-1},\lambda_{k_j}).
\end{equation}
From (\ref{eq:s1}) and (\ref{eq:size1}), we see that (\ref{eq:lens})
is true.
\end{proof}
\section{Riemann-Hilbert analysis}\label{se:RHP}
We can now implement the Riemann-Hilbert method to obtain the strong
asymptotics for the multiple Laguerre polynomials introduced in
Section \ref{se:MOP} and use it to prove Theorem \ref{thm:main2}.
The analysis is very similar to those in \cite{BKext1} (See also
\cite{Lysov}).

Let $C(f)$ be the Cauchy transform of the function $f(z)\in
L^2(\mathbb{R}_+)$ in $\mathbb{R}_+$
\begin{equation}\label{eq:cauchy}
C(f)(z)=\frac{1}{2\pi i}\int_{\mathbb{R}_+}\frac{f(s)}{s-z}ds,
\end{equation}
and let $w_1(z)$ and $w_2(z)$ be the weights of the multiple
Laguerre polynomials.
\begin{equation}\label{eq:weight}
w_1(z)=z^{M-N}e^{-Mz},\quad w_2(z)=z^{M-N}e^{-Ma^{-1}z},
\end{equation}
Denote by $\kappa_1$ and $\kappa_2$ the constants
\begin{equation*}
\kappa_1=-2\pi
i\left(h^{(1)}_{N_0-1,N_1}\right)^{-1},\quad\kappa_2=-2\pi
i\left(h^{(2)}_{N_0,N_1-1}\right)^{-1}.
\end{equation*}
Then due to the orthogonality condition (\ref{eq:multiop}), the
following matrix
\begin{equation}\label{eq:Ymatr}
Y(z)=\begin{pmatrix}P_{N_0,N_1}(z)&C(P_{N_0,N_1}w_1)(z)&C(P_{N_0,N_1}w_2)(z)\\
\kappa_1P_{N_0-1,N_1}(z)&\kappa_1C(P_{N_0-1,N_1}w_1)(z)&\kappa_1C(P_{N_0-1,N_1}w_2)(z)\\
\kappa_2P_{N_0,N_1-1}(z)&\kappa_2C(P_{N_0,N_1-1}w_1)(z)&\kappa_2C(P_{N_0,N_1-1}w_2)(z)
\end{pmatrix}
\end{equation}
is the unique solution of the following Riemann-Hilbert problem.
\begin{equation}\label{eq:RHPY}
\begin{split}
1.\quad &\text{$Y(z)$ is analytic in
$\mathbb{C}\setminus\mathbb{R}_+$},\\
2.\quad &Y_+(z)=Y_-(z)\begin{pmatrix}1&w_1(z)&w_2(z)\\
0&1&0\\
0&0&1
\end{pmatrix},\quad z\in\mathbb{R}_+\\
3.\quad &Y(z)=\left(I+O(z^{-1})\right)\begin{pmatrix}z^{N}&0&0\\
0&z^{-N_0}&0\\
0&0&z^{-N_1}
\end{pmatrix},\quad z\rightarrow\infty,\\
4.\quad  &Y(z)=O(1),\quad z\rightarrow 0.
\end{split}
\end{equation}
On the other hand, the following matrix
\begin{equation*}
X(z)=\begin{pmatrix}
-2\pi iC\left(A^1_{N_0,N_1}w_1+A^a_{N_0,N_1}w_2\right)&2\pi i A^1_{N_0,N_1}&2\pi iA^a_{N_0,N_1}\\
-h_{N_0,N_1}^{(1)}C\left(A^1_{N_0+1,N_1}w_1+A^a_{N_0+1,N_1}w_2\right)&h_{N_0,N_1}^{(1)} A^1_{N_0+1,N_1}&h_{N_0,N_1}^{(1)}A^a_{N_0+1,N_1}\\
-h_{N_0,N_1}^{(2)}C\left(A^1_{N_0,N_1+1}w_1+A^a_{N_0,N_1+1}w_2\right)&h_{N_0,N_1}^{(2)}
A^1_{N_0,N_1+1}&h_{N_0,N_1}^{(2)}A^a_{N_0,N_1+1}
\end{pmatrix}
\end{equation*}
is the unique solution of the following Riemann-Hilbert problem
\begin{equation}\label{eq:RHPX}
\begin{split}
1.\quad &\text{$X(z)$ is analytic in
$\mathbb{C}\setminus\mathbb{R}_+$},\\
2.\quad &X_+(z)=X_-(z)\begin{pmatrix}1&0&0\\
-w_1(z)&1&0\\
-w_2(z)&0&1
\end{pmatrix},\quad z\in\mathbb{R}_+\\
3.\quad &X(z)=\left(I+O(z^{-1})\right)\begin{pmatrix}z^{-N}&0&0\\
0&z^{N_0}&0\\
0&0&z^{N_1}
\end{pmatrix},\quad z\rightarrow\infty,\\
4.\quad  &X(z)=O(1),\quad z\rightarrow 0.
\end{split}
\end{equation}
In particular, by the uniqueness of the solution of the
Riemann-Hilbert problems (\ref{eq:RHPX}) and (\ref{eq:RHPY}), we see
that
\begin{equation}
X(z)=Y^{-T}(z).
\end{equation}
The proof of these statement can be found in \cite{vanGerKuij}. By a
similar computation as that in \cite{BKMOP} and \cite{BK1}, we can
express the kernel (\ref{eq:ker}) in terms of the solution of the
Riemann-Hilbert problem $Y(z)$.
\begin{equation}\label{eq:kerRHP}
\begin{split}
K_{M,N}(x,y)&=\frac{(xy)^{\frac{M-N}{2}}\left(e^{-My}\left[Y^{-1}_+(y)Y_+(x)\right]_{21}
+e^{-Ma^{-1}y}\left[Y^{-1}_+(y)Y_+(x)\right]_{31}\right)}{2\pi
i(x-y)},\\
&=\frac{(xy)^{\frac{M-N}{2}}}{2\pi i(x-y)}\left(0\quad e^{-My}\quad
e^{-Ma^{-1}y}\right)Y_+^{-1}(y)Y_+(x)\begin{pmatrix} 1 \\ 0\\0
\end{pmatrix}
\end{split}
\end{equation}
where $A_{21}$ and $A_{31}$ are the $21^{th}$ and $31^{th}$ entries
of $A$.
\subsection{First transformation of the Riemann-Hilbert problem}
We should now use the functions $\theta_j(z)$ constructed in Section
\ref{se:branch} to deform the Riemann-Hilbert problem
(\ref{eq:RHPY}). Our goal is to deform the Riemann-Hilbert problem
so that it can be approximated by a Riemann-Hilbert problem that is
explicitly solvable. Before we deform the Riemann-Hilbert problem,
let us make the following observation.

As pointed out in \cite{Baikspike}, Lemma \ref{le:CS} applies to any
distribution $G$ whose Stieltjes transform $m_{G}(z)$
(\ref{eq:stie}) satisfies an equation of the form
\begin{equation*}
z(m_{G})=-\frac{1}{m_{G}}+c_G\int_{\mathbb{R}}\frac{t}{1+tm_{G}}dH_G(t).
\end{equation*}
for some constant $c_G$ and distribution $H_G(t)$. Hence if we
replace the constants $c$ by $c_N=\frac{N}{M}$, $\beta$ by
$\beta_N=\frac{N_1}{N}$ and $H(t)$ by
$H_N(t)=(1-\beta_N)\delta_1+\beta_N\delta_{a}$, the results we
obtained in Section \ref{se:Stie} remain valid. In particular, since
the discriminant $\Delta$ in Theorem \ref{thm:density} is continuous
in the parameters $c$ and $\beta$, if $\Delta>0$, there exists large
enough $M$, $N$ and $N_1$ such that the discriminant $\Delta_N$
evaluated with the parameters $c_N$ and $\beta_N$ is positive. Let
$B_1^N$ and $B_2^N$ the following constants
\begin{equation*}
B_2^N=a(1-c_N),\quad B_1^N=1-c_N(1-\beta_N)+a(1-c_N\beta_N).
\end{equation*}
Then for large enough $M$, $N$ and $N_1$, the equation
\begin{equation}\label{eq:curveN}
za\xi^3+(A_2z+B_2^N)\xi^2+(z+B_1^N)\xi+1=0,
\end{equation}
will define a Riemann surface with 4 distinct real branch points
$0<\lambda_1^N<\ldots<\lambda_4^N$. In particular, let the roots of
this equation be $\xi_j^N(z)$ and define $\theta_j^N(z)$ as in
(\ref{eq:theta})
\begin{equation}\label{eq:thetaN}
\begin{split}
\theta_1^N(z)&=\int_{\lambda_4^N}^z\xi_1^N(x)dx,\quad
\theta_2^N(z)=\int_{\lambda_{k_2}^N}^z\xi_2^N(x)dx,\quad
\theta_3^N(z)=\int_{\lambda_{k_3}^N}^z\xi_3^N(x)dx,\\
k_2&=2,\quad k_3=4,\quad a>1,\quad k_2=4,\quad k_3=2,\quad a<1.
\end{split}
\end{equation}
Then all the results we obtained in Section \ref{se:branch} will
apply to the $\theta_j^N(z)$ and $\xi_j^N(z)$ with $c$ and $\beta$
replaced by $c_N$ and $\beta_N$. In particular, $\theta_j^N(z)$ has
the behavior near $z\rightarrow\infty$ and $z=0$ as indicated in
(\ref{eq:asymtheta}) with $c$, $\beta$ replaced by $c_N$ and
$\beta_N$ and $l_j$ replaced by some constants $l_j^N$.

We will now start deforming the Riemann-Hilbert problem
(\ref{eq:RHPY}). First let us define the functions $g_j^N(z)$ to be
\begin{equation}\label{eq:g}
\begin{split}
g_1^N(z)&=\theta_1^N(z)+(1-c_N)\log z,\quad
g_2^N(z)=\theta_2^N(z)+\theta_{1,-}^N(\lambda^N_{k_2})+z,\\
g_3^N(z)&=\theta_3^N(z)+\theta_{1,-}^N(\lambda^N_{k_3})+\frac{z}{a}.
\end{split}
\end{equation}
where the branch cut of $\log z$ in $g_1^N(z)$ is chosen to be the
negative real axis.

We then define $T(z)$ to be
\begin{equation}\label{eq:Tz}
T(z)=diag\left(e^{-Ml_1^N},e^{-Ml_2^N},e^{-Ml_3^N}\right)Y(z)
diag\left(e^{Mg_1^N(z)},e^{Mg_2^N(z)},e^{Mg_3^N(z)}\right)
\end{equation}
The matrix $T(z)$ will satisfy the following Riemann-Hilbert
problem.
\begin{equation}\label{eq:RHPT}
\begin{split}
1.\quad &\text{$T(z)$ is analytic in
$\mathbb{C}\setminus\mathbb{R}$},\\
2.\quad &T_+(z)=T_-(z)J_T(z),\quad z\in\mathbb{R},\\
3.\quad &T(z)=I+O(z^{-1}),\quad z\rightarrow\infty,\\
4.\quad  & T(z)=O(1),\quad z\rightarrow 0.
\end{split}
\end{equation}
where $J_T(z)$ is the following matrix
\begin{equation}\label{eq:JT}
\begin{split}
J_T(z)&=\begin{pmatrix}e^{M\left(\theta_{1,+}^N(z)-\theta_{1,-}^N(z)\right)}&
e^{M\left(\tilde{\theta}_{2,+}^N(z)-\theta_{1,-}^N(z)\right)}&
e^{M\left(\tilde{\theta}_{3,+}^N(z)-\theta_{1,-}^N(z)\right)}\\
0&e^{M\left(\theta_{2,+}^N(z)-\theta_{2,-}^N(z)\right)}&0\\
0&0&e^{M\left(\theta_{3,+}^N(z)-\theta_{3,-}^N(z)\right)}
\end{pmatrix},\\
\tilde{\theta}_{j}^N(z)&=\theta_j^N(z)+\theta_{1,-}^N(\lambda^N_{k_j}).
\end{split}
\end{equation}
By applying Lemma \ref{le:cuttheta} to the $\theta_j^N(z)$, we can
simplify the jump matrix $J_T(z)$. In particular, on
$[\lambda^N_{k_2-1},\lambda^N_{k_2}]$, we have
\begin{equation}\label{eq:JTk2}
\begin{split}
J_T(z)&=\begin{pmatrix}e^{M\left(\theta_1^N(z)-\tilde{\theta}_2^N(z)\right)_+}&
1&
e^{M\left(\tilde{\theta}_{3,+}^N(z)-\theta_{1,-}^N(z)\right)}\\
0&e^{M\left(\theta_1^N(z)-\tilde{\theta}_2^N(z)\right)_-}&0\\
0&0&1
\end{pmatrix},
\end{split}
\end{equation}
while on $[\lambda^N_{k_3-1},\lambda^N_{k_3}]$, we have
\begin{equation}\label{eq:JTk3}
\begin{split}
J_T(z)&=\begin{pmatrix}e^{M\left(\theta_1^N(z)-\tilde{\theta}_3^N(z)\right)_+}&
e^{M\left(\tilde{\theta}_{2,+}^N(z)-\theta_{1,-}^N(z)\right)}&
1\\
0&1&0\\
0&0&e^{M\left(\theta_1^N(z)-\tilde{\theta}_3^N(z)\right)_-}
\end{pmatrix},
\end{split}
\end{equation}
On the rest of the positive real axis, the jump matrix becomes
\begin{equation}\label{eq:JTrest}
\begin{split}
J_T(z)&=\begin{pmatrix}1&
e^{M\left(\tilde{\theta}_{2,+}^N(z)-\theta_{1,-}^N(z)\right)}&
e^{M\left(\tilde{\theta}_{3,+}^N(z)-\theta_{1,-}^N(z)\right)}\\
0&1&0\\
0&0&1
\end{pmatrix}.
\end{split}
\end{equation}
This is because $Mc_N\beta_N=N_1$ and $Mc_N(1-\beta_N)=N_0$ are both
integers. And on the negative real axis, the matrix $T(z)$ has no
jump for the same reason. Note that the jump matrix $J_T(z)$ is
continuous at $z=0$ as the off-diagonal entries of (\ref{eq:JTrest})
contain the factor $e^{-M\theta_{1,-}^N(z)}$ which vanishes at the
origin.

The Riemann-Hilbert problem for $T(z)$ now takes same form as the
one in \cite{BKext1} (See also \cite{Lysov}) and the techniques
developed there can now be applied to our problem.

\subsection{Lens opening and approximation of the Riemann-Hilbert
problem}

We will now apply the lens opening technique that is standard in the
Riemann-Hilbert analysis. (See e.g. \cite{BI}, \cite{D}, \cite{DKV},
\cite{DKV2}, \cite{BKext1}, \cite{Lysov}) Let us define the lens
contours $\Xi_{\pm}^{j}$ around a branch cut
$[\lambda^N_{k_j-1},\lambda^N_{k_j}]$ as in Figure \ref{fig:lens}.
\begin{figure}
\centering \psfrag{l1}[][][1][0.0]{\small$\lambda_{k_j-1}$}
\psfrag{l2}[][][1][0.0]{\small$\lambda_{k_j}$}
\psfrag{X+}[][][1][0.0]{\small$\Xi_+^j$}
\psfrag{X-}[][][1][0.0]{\small$\Xi_-^j$}
\includegraphics[scale=0.75]{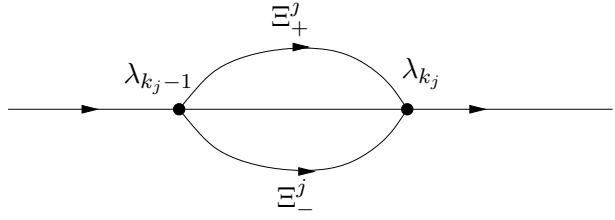}
\caption{The lens contours around a branch cut.}\label{fig:lens}
\end{figure}
We will chose the lens contours such that they are contained inside
the neighborhood $U_j$ stated in Lemma \ref{le:lens}. Now note that
the jump matrix $J_T(z)$ in (\ref{eq:JTk2}) and (\ref{eq:JTk3}) has
the following factorizations.
\begin{equation}\label{eq:Tfact}
\begin{split}
J_T(z)&=L_{j,-}(z)J_S(z)K_{j,+}(z),\quad
z\in[\lambda_{k_j-1}^N,\lambda_{k_j}^N],\\
J_S(z)&=\begin{pmatrix}0&1&0\\
-1&0&0\\
0&0&1
\end{pmatrix},\quad z\in[\lambda_{k_2-1}^N,\lambda_{k_2}^N],\\
J_S(z)&=\begin{pmatrix}0&0&1\\
0&1&0\\
-1&0&0
\end{pmatrix},\quad z\in[\lambda_{k_3-1}^N,\lambda_{k_3}^N],\\
\end{split}
\end{equation}
where $L_{j}(z)$, $K_{j}(z)$ are the following matrices
\begin{equation}\label{eq:Lj}
\begin{split}
L_2(z)&=\begin{pmatrix}1&0&0\\
e^{M\left(\theta_1^N(z)-\tilde{\theta}_2^N(z)\right)}&
1&-e^{M\left(\tilde{\theta}_3^N(z)-\tilde{\theta}_2^N(z)\right)}\\
0&0&1
\end{pmatrix},\\
K_2(z)&=\begin{pmatrix}1&0&0\\
e^{M\left(\theta_1^N(z)-\tilde{\theta}_2^N(z)\right)}&
1&e^{M\left(\tilde{\theta}_3^N(z)-\tilde{\theta}_2^N(z)\right)}\\
0&0&1
\end{pmatrix},\\
L_3(z)&=\begin{pmatrix}1&0&0\\
0&1&0\\
e^{M\left(\theta_1^N(z)-\tilde{\theta}_3^N(z)\right)}&-e^{M\left(\tilde{\theta}_2^N(z)-\tilde{\theta}_3^N(z)\right)}&1
\end{pmatrix},\\
K_3(z)&=\begin{pmatrix}1&0&0\\
0&1&0\\
e^{M\left(\theta_1^N(z)-\tilde{\theta}_3^N(z)\right)}&e^{M\left(\tilde{\theta}_2^N(z)-\tilde{\theta}_3^N(z)\right)}&1
\end{pmatrix}
\end{split}
\end{equation}
and the $\pm$ indices in (\ref{eq:Tfact}) are the boundary values of
the matrices $L_j(z)$ and $K_j(z)$ on the branch cuts.

If we define the matrix $S(z)$ to be
\begin{equation}\label{eq:Sz}
S(z)=\left\{
       \begin{array}{ll}
         T(z), & \hbox{$z$ outside of the lens regions;} \\
         T(z)K_j^{-1}(z), & \hbox{$z$ in the upper lens region of $[\lambda^N_{k_j-1},\lambda^N_{k_j}]$;} \\
         T(z)L_j(z), & \hbox{$z$ in the lower lens region of $[\lambda^N_{k_j-1},\lambda^N_{k_j}]$.}
       \end{array}
     \right.
\end{equation}
Then by the factorization (\ref{eq:Tfact}) and the jump of $T(z)$
(\ref{eq:JTrest}), we see that the matrix $S(z)$ is a solution to
the following Riemann-Hilbert problem.
\begin{equation}\label{eq:RHPS}
\begin{split}
1.\quad &\text{$S(z)$ is analytic in
$\mathbb{C}\setminus\left(\mathbb{R}_+\cup\Xi_{\pm}^j\right)$},\\
2.\quad &S_+(z)=S_-(z)J_S(z),\quad z\in\left(\mathbb{R}_+\cup\Xi_{\pm}^j\right),\\
3.\quad &S(z)=I+O(z^{-1}),\quad z\rightarrow\infty,\\
4.\quad  & S(z)=O(1),\quad z\rightarrow 0.
\end{split}
\end{equation}
where the matrix $J_S(z)$ is given by (\ref{eq:Tfact}) on
$[\lambda_{k_j-1},\lambda_{k_j}]$ and on $\Xi_{\pm}^j$, it is given
by
\begin{equation}\label{eq:JS1}
\begin{split}
J_S(z)&=K_j(z),\quad z\in\Xi_{+}^j,\quad J_S(z)=L_j(z),\quad
z\in\Xi_{-}^j.
\end{split}
\end{equation}
On
$\mathbb{R}_+\setminus\left(\cup_{j=1}^2[\lambda_{k_j-1}^N,\lambda_{k_j}^N]\right)$,
we have $J_S(z)=J_T(z)$.

Then by Lemma \ref{le:size} and Lemma \ref{le:lens}, we see that,
away from $[\lambda_{k_j-1}^N,\lambda_{k_j}^N]$ and from some small
neighborhoods $D_j$ of $\lambda_j^N$, the off-diagonal entries of
$J_S(z)$ are exponentially small as $M\rightarrow\infty$. This
suggests the following approximation to the Riemann-Hilbert problem
(\ref{eq:RHPS}).
\begin{equation}\label{eq:RHPSinf}
\begin{split}
1.\quad &\text{$S^{\infty}(z)$ is analytic in
$\mathbb{C}\setminus\left(\cup_{j=1}^2[\lambda_{k_j-1}^N,\lambda_{k_j}^N]\right)$},\\
2.\quad &S_+^{\infty}(z)=S_-^{\infty}(z)J_S(z),\quad z\in\cup_{j=1}^2[\lambda_{k_j-1}^N,\lambda_{k_j}^N],\\
3.\quad &S^{\infty}(z)=I+O(z^{-1}),\quad z\rightarrow\infty.
\end{split}
\end{equation}
In the next section we will give an explicit solution to this
Riemann-Hilbert problem and we will eventually show that
$S^{\infty}(z)$ is a good approximation of $S(z)$ when $z$ is
outside of the small neighborhoods $D_j$ of the branch points
$\lambda_j^N$.
\subsection{Outer parametrix}\label{se:outer}
The construction of the outer parametrix $S^{\infty}(z)$ is similar
to the one in \cite{BKext1} (See also \cite{Lysov}).

Let $\Lie^N$ be the Riemann surface defined by (\ref{eq:curveN}) and
let $\Gamma_j$ be the images of
$[\lambda_{k_j-1}^N,\lambda_{k_j}^N]$ on $\Lie^N$ under the map
$\xi_{1,+}^N(z)$. That is
\begin{equation}\label{eq:Gammaj}
\Gamma_j=\left\{(z,\xi)\in\Lie^N|\quad \xi=\xi_{1,+}^N(z),\quad
z\in[\lambda_{k_j-1}^N,\lambda_{k_j}^N].\right\},\quad j=2,3.
\end{equation}
Let us now define the functions $S^{\infty}_k(\xi)$, $k=1,2,3$ to be
the following functions on $\Lie^N$.
\begin{equation}\label{eq:Sinfk}
\begin{split}
S^{\infty}_1(\xi)&=a\sqrt{\prod_{j=1}^4\gamma_j^N}\frac{(\xi+1)(\xi+a^{-1})}{\sqrt{\prod_{j=1}^4(\xi-\gamma_j^N)}},\\
S^{\infty}_2(\xi)&=\frac{a\sqrt{\prod_{j=1}^4(1+\gamma_j^N)}}{a-1}\frac{\xi(\xi+a^{-1})}{\sqrt{\prod_{j=1}^4(\xi-\gamma_j^N)}},\\
S^{\infty}_3(\xi)&=\frac{\sqrt{\prod_{j=1}^4(1+a\gamma_j^N)}}{1-a}\frac{\xi(\xi+1)}{\sqrt{\prod_{j=1}^4(\xi-\gamma_j^N)}}.
\end{split}
\end{equation}
where $\gamma_j^N$ are the roots of polynomial
\begin{equation}\label{eq:quarticN}
\begin{split}
a^2(1-c_N)\xi^4
&+2(a^2(1-c_N\beta_N)+a(1-c_N(1-\beta_N))\xi^3\\
&+(1-c_N(1-\beta_N)+a^2(1-c_N\beta_N)+4a)\xi^2 +2(1+a)\xi+1\\
&=a^2(1-c_N)\prod_{j=1}^4(\xi-\gamma_j^N).
\end{split}
\end{equation}
The branch cuts of the square root in (\ref{eq:Sinfk}) are chosen to
be the contours $\Gamma_j$ (\ref{eq:Gammaj}) that joins $\gamma_1^N$
to $\gamma_2^N$ and $\gamma_3^N$ to $\gamma_4^N$.

By using the asymptotic behavior of the functions $\xi_j^N(z)$
(\ref{eq:xiinfty}) and (\ref{eq:zeroasym}), with $c$ and $\beta$
replaced by $c_N$ and $\beta_N$, we see that the all the functions
$S^{\infty}_k(\xi)$ are holomorphic near $\xi_j^N(0)$ for $j=1,2$
and $3$. Moreover, at the points $\xi_j^N(\infty)$, these functions
satisfy
\begin{equation}\label{eq:sasym}
\begin{split}
S^{\infty}_k(\xi_j^N(\infty))=\delta_{jk},\quad k,j=1,2,3.
\end{split}
\end{equation}
Let us define $S^{\infty}(z)$ to be the following matrix on
$z\in\mathbb{C}$.
\begin{equation}\label{eq:sinf}
\left(S^{\infty}(z)\right)_{ij}=S^{\infty}_i(\xi_j^N(z)),\quad 1\leq
i,j\leq 3.
\end{equation}
Then, since the branch cut of the square root in (\ref{eq:Sinfk})
are chosen to be $\Gamma_j$, we have, from the jump discontinuities
of the $\xi_j^N(z)$ (\ref{eq:bound}), the following
\begin{equation}\label{eq:Sjump}
S^{\infty}_i(\xi_{1,+}^N(z))=-S^{\infty}_i(\xi_{j,-}^N(z)),\quad
z\in[\lambda_{k_j-1},\lambda_{k_j}],\quad i=1,2,3,\quad j=2,3.
\end{equation}
From this and the asymptotic behavior (\ref{eq:sasym}) of the
$S^{\infty}_i(\xi)$, we see that the matrix $S^{\infty}(z)$
satisfies the Riemann-Hilbert problem (\ref{eq:RHPS}).
\begin{proposition}\label{pro:outer}
The matrix $S^{\infty}(z)$ defined by (\ref{eq:sinf}) satisfies the
following Riemann-Hilbert problem.
\begin{equation}\label{eq:outer}
\begin{split}
1.\quad &\text{$S^{\infty}(z)$ is analytic in
$\mathbb{C}\setminus\left(\cup_{j=1}^2[\lambda_{k_j-1}^N,\lambda_{k_j}^N]\right)$},\\
2.\quad &S_+^{\infty}(z)=S_-^{\infty}(z)J_S(z),\quad z\in\cup_{j=1}^2[\lambda_{k_j-1}^N,\lambda_{k_j}^N],\\
3.\quad &S^{\infty}(z)=I+O(z^{-1}),\quad z\rightarrow\infty,\\
4.\quad
&S^{\infty}(z)=O\left((z-\lambda_j^N)^{-\frac{1}{4}}\right),\quad
z\rightarrow\lambda_j^N, \quad j=1,\ldots, 4.
\end{split}
\end{equation}
where $J_S(z)$ is defined as in (\ref{eq:Tfact}).
\end{proposition}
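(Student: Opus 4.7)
The plan is to verify each of the four properties of the Riemann--Hilbert problem (\ref{eq:outer}) by direct computation using the explicit formulas (\ref{eq:Sinfk}) together with the analytic structure of the branches $\xi_j^N(z)$ of the cubic (\ref{eq:curveN}). The whole argument rests on the observation that since (\ref{eq:curveN}) is linear in $z$, the abstract Riemann surface $\Lie^N$ is rational, isomorphic to the $\xi$-sphere, so each $S^\infty_k$ can be analysed as a meromorphic function of the single variable $\xi$.

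First I would establish analyticity. Each $S^\infty_k(\xi)$ is meromorphic on $\mathbb{CP}^1_\xi$ away from $\Gamma_2\cup\Gamma_3$, and its only potential singularities are inverse-square-root behaviours at the four branch points $\gamma_j^N$. The factors $(\xi+1)(\xi+a^{-1})$, $\xi(\xi+a^{-1})$, $\xi(\xi+1)$ in the numerators of $S^\infty_1,S^\infty_2,S^\infty_3$ respectively were tailored to cancel the potential singularities arising from the poles of the $\xi_j^N$: namely $\xi_2^N(\infty)=-1$, $\xi_3^N(\infty)=-a^{-1}$, and $\xi_1^N(0)=\infty$ where $\xi_1^N$ has a simple pole. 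Combined with the fact that each $S^\infty_k(\xi)$ is bounded as $\xi\to\infty$ (the quartic under the root balances the quadratic numerator), this yields analyticity of $(S^\infty)_{ij}(z)=S^\infty_i(\xi_j^N(z))$ on $\mathbb{C}\setminus\bigl([\lambda_1^N,\lambda_2^N]\cup[\lambda_3^N,\lambda_4^N]\bigr)$, with the apparent singularity at $z=0$ removable.

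Second, I would verify the normalisation at infinity. Evaluating $S^\infty_k$ at the three limits $\xi_1^N(\infty)=0$, $\xi_2^N(\infty)=-1$, $\xi_3^N(\infty)=-a^{-1}$ and using the identities $\sqrt{\prod(-\gamma_j^N)}=\sqrt{\prod\gamma_j^N}$, $\sqrt{\prod(-1-\gamma_j^N)}=\sqrt{\prod(1+\gamma_j^N)}$, and $a^2\sqrt{\prod(-a^{-1}-\gamma_j^N)}=\sqrt{\prod(1+a\gamma_j^N)}$, a short computation gives $(S^\infty)_{ij}(\infty)=\delta_{ij}$. Combined with the expansion $\xi_j^N(z)=\xi_j^N(\infty)+O(z^{-1})$ from (\ref{eq:xiinfty}) and the analyticity of $S^\infty_i$ at these regular points, one obtains $S^\infty(z)=I+O(z^{-1})$. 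For the branch-point behaviour in condition 4, expansion (\ref{eq:branchbe}) gives $\xi_1^N(z)-\gamma_j^N=O((z-\lambda_j^N)^{1/2})$, which under the square root $\sqrt{\prod_k(\xi-\gamma_k^N)}$ produces the claimed $O((z-\lambda_j^N)^{-1/4})$ singularity.

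The main obstacle is the jump condition, which requires careful sign tracking. The relation (\ref{eq:bound}) gives $\xi_{1,\pm}(z)=\xi_{j,\mp}(z)$ on $[\lambda_{k_j-1}^N,\lambda_{k_j}^N]$, so as complex numbers $\xi_{1,+}(z)$ and $\xi_{j,-}(z)$ coincide; but both land on the branch cut $\Gamma_j$ of the square root in the $\xi$-plane, and the geometric fact to be verified is that they approach $\Gamma_j$ from opposite sides. Concretely, the map $z\mapsto\xi_1(z)$ sends the upper $z$-half-plane to one side of $\Gamma_j$ while $z\mapsto\xi_j(z)$ sends the lower $z$-half-plane to the other side, as a consequence of $\Gamma_j$ being defined as the image of the interval under $\xi_{1,+}^N$ together with the local conformality of $z(\xi)$ at non-branch points. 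Hence the square root in (\ref{eq:Sinfk}) flips sign between $S^\infty_i(\xi_{1,+}(z))$ and $S^\infty_i(\xi_{j,-}(z))$, producing the identity $S^\infty_i(\xi_{1,+}(z))=-S^\infty_i(\xi_{j,-}(z))$ of (\ref{eq:Sjump}). Column-by-column this gives exactly the anti-swap jump $S^\infty_+(z)=S^\infty_-(z)J_S(z)$ prescribed in (\ref{eq:Tfact}), completing the verification.
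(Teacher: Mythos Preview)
Your proposal is correct and follows essentially the same route as the paper: the paper establishes (\ref{eq:sasym}) and (\ref{eq:Sjump}) just before stating the proposition and then in the proof proper checks only properties 1 and 4, whereas you handle all four together, but the underlying arguments are identical, including your more explicit geometric justification of the sign flip across $\Gamma_j$. One minor slip in your first paragraph: $\xi_2^N(\infty)=-1$ and $\xi_3^N(\infty)=-a^{-1}$ are ordinary values, not poles of the $\xi_j^N$; the numerator zeros at $0$, $-1$, $-a^{-1}$ serve the normalisation $S^\infty_k(\xi_j^N(\infty))=\delta_{kj}$ rather than any singularity cancellation, and the only genuine pole of a branch (that of $\xi_1^N$ at $z=0$) is handled by the degree-two balance you correctly identify.
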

\begin{proof} As we have already verified the jumps and the
asymptotic behavior at $z=\infty$, we only need to verify that
properties 1. and 4. in (\ref{eq:outer}) are true. From
(\ref{eq:Sinfk}), we see that as a function in $z$, the matrix
(\ref{eq:sinf}) has only jump discontinuities along the branch cuts
$[\lambda_{k_j-1}^N,\lambda_{k_j}^N]$ of $\xi_j^N$. Also, it can
only have singularities at the points $\gamma_k^N$ in which the
denominator vanishes, or at $z=0$ in which $\xi_1^N(z)$ has a pole.
As we have already pointed out that all the functions
$S^{\infty}_j(\xi)$ are holomorphic near the point $z=0$, the only
possible singularities are the points $z=\lambda_j^N$. Near these
points, 2 of the functions $\xi_j^N(z)$ will behave as
\begin{equation*}
\xi_j^N(z)=\gamma_k^N+C_j(z-\lambda_k^N)^{\frac{1}{2}}+O\left((z-\lambda_k^N)\right),\quad
z\rightarrow\lambda_k^N,
\end{equation*}
for some constants $C_j$. Hence the matrix (\ref{eq:sinf}) will have
a fourth root singularity at these points. This completes the proof
of the proposition.
\end{proof}
\subsection{Local parametrices near the edge points
$\lambda_k^N$}\label{se:local}

Near the edge points $\lambda_k^N$, the approximation of $S(z)$ by
$S^{\infty}(z)$ failed and we must solve the Riemann-Hilbert problem
exactly near these points and match the solutions to the outer
parametrix (\ref{eq:sinf}) up to an error term of order $O(M^{-1})$.
To be precise, let $\delta>0$ and let $D_k$ be a disc of radius
$\delta$ centered at the point $\lambda_k^N$, $k=1,\ldots,4$. We
would like to construct local parametrices $S^k(z)$ in $D_k$ such
that
\begin{equation}\label{eq:localpara}
\begin{split}
1.\quad &\text{$S^{k}(z)$ is analytic in
$D_k\setminus \left(\mathbb{R}\cup\Xi^k_-\cup\Xi^k_+\right)$},\\
2.\quad &S_+^{k}(z)=S_-^{k}(z)J_S(z),\quad z\in D_k\cap\left(\mathbb{R}\cup\Xi^k_-\cup\Xi^k_+\right),\\
3.\quad &S^{k}(z)=\left(I+O(M^{-1})\right)S^{\infty}(z),\quad z\in\p
D_k.
\end{split}
\end{equation}
The local parametrices $S^k(z)$ can be constructed by using the Airy
function as in \cite{BKext1} (See also \cite{Lysov}). Since the
construction is identical to that in \cite{BKext1} and \cite{Lysov},
we shall not go into the details but merely set up the notations and
state the results in \cite{BKext1} and \cite{Lysov}.

First recall that the Airy function $\mathrm{Ai}(z)$ is the unique
solution to the equation $v^{\prime\prime}=zv$ with the asymptotic
behavior given by (\ref{eq:asymairy}).

Let $\omega=e^{\frac{2\pi i}{3}}$, then functions $\mathrm{Ai}(z)$,
$\mathrm{Ai}(\omega z)$ and $\mathrm{Ai}(\omega^2 z)$ satisfy the
following linear relation,
\begin{equation}
\mathrm{Ai}(z)+\omega \mathrm{Ai}(\omega
z)+\omega^2\mathrm{Ai}(\omega^2 z)=0.
\end{equation}

Now note that, in the neighborhoods $D_{k_j}$ and $D_{k_j-1}$,
$j=2,3$, the functions
\begin{equation}\label{eq:fLR}
\begin{split}
f_j^R(z)&=\theta_1^N(z)-\theta_{1,-}(\lambda_{k_j}^N)-\theta_j^N(z)+\theta_{j,-}(\lambda_{k_j}^N),\\
f_j^L(z)&=\theta_1^N(z)-\theta_{1,-}(\lambda_{k_j-1}^N)-\theta_j^N(z)+\theta_{j,-}(\lambda_{k_j-1}^N)
\end{split}
\end{equation}
vanish like $\left(z-\lambda_{k_j}^N\right)^{\frac{3}{2}}$ and
$\left(z-\lambda_{k_j-1}^N\right)^{\frac{3}{2}}$ respectively, as
$z$ approaches $\lambda_{k_j}^N$ or $\lambda_{k_j-1}^N$ from the
lower half plane. Let us take the restriction of $f_j^R(z)$ and
$f_j^L(z)$ to the lower half plane, and analytically continue them
to $D_{k_j}\setminus[\lambda_{k_j}^N,\lambda_{k_j-1}^N]$ or to
$D_{k_j-1}\setminus[\lambda_{k_j}^N,\lambda_{k_j-1}^N]$. We will
denote these analytic continuation by $f_{j,l}^R(z)$ and
$f_{j,l}^L(z)$. Then the functions $f_{j,l}^R(z)$ and $f_{j,l}^L(z)$
will behave like $\left(z-\lambda_{k_j}^N\right)^{\frac{3}{2}}$ and
$\left(z-\lambda_{k_j-1}^N\right)^{\frac{3}{2}}$ in $D_{k_j}$ and
$D_{k_j-1}$ respectively. Therefore if we define the local function
$\zeta(z)$ by
\begin{equation}\label{eq:zeta}
\begin{split}
\zeta(z)&=\left(\frac{4}{3}f_{j,l}^R(z)\right)^{\frac{2}{3}},\quad z\in D_{k_j},\\
\zeta(z)&=\left(\frac{4}{3}f_{j,l}^L(z)\right)^{\frac{2}{3}},\quad
z\in D_{k_j-1},
\end{split}
\end{equation}
then $\zeta(z)$ will be holomorphic inside the neighborhoods $D_k$.
Moreover, for small enough $\delta>0$, $\zeta(z)$ will be conformal
inside $D_k$. (The $\frac{4}{3}$ factor is introduced to simplify
calculation) We should make use of the freedom in the definition of
the lens contours to deform them such that inside $D_2$ and $D_4$,
the upper and lower lens contours coincide with the contours
$\arg(\zeta(z))=\frac{2\pi}{3}$ and $\arg(\zeta(z))=-\frac{2\pi}{3}$
respectively, while inside $D_1$ and $D_3$, we will deform the lens
contours such that the upper and lower lens contours coincide with
$\arg(\zeta(z))=\frac{\pi}{3}$ and $\arg(\zeta(z))=-\frac{\pi}{3}$
respectively.

Let us now consider the matrix $\tilde{S}^k(z)$ that is related to
$S^k(z)$ by
\begin{equation}\label{eq:tildeSk}
\begin{split}
\tilde{S}^{k_2-i}(z)&=\left\{
                     \begin{array}{ll}
                       S^{k_2-i}(z), & \hbox{$z$ inside the lens of $D_{k_2-i}$;} \\
                       S^{k_2-i}(z)\begin{pmatrix}1&0&0\\
                                  0&1&-e^{M\left(\tilde{\theta}_3^N(z)-\tilde{\theta}_2^N(z)\right)}\\
                                   0&0&1\end{pmatrix}, & \hbox{$z$ outside the lens of $D_{k_2-i}$.}
                     \end{array}
                   \right.\\
\tilde{S}^{k_3-i}(z)&=\left\{
                     \begin{array}{ll}
                       S^{k_3-i}(z), & \hbox{$z$ inside the lens of $D_{k_3-i}$;} \\
                       S^{k_3-i}(z)\begin{pmatrix}1&0&0\\
                                  0&1&0\\
                                   0&-e^{M\left(\tilde{\theta}_2^N(z)-\tilde{\theta}_3^N(z)\right)}&1\end{pmatrix}, & \hbox{$z$ outside the lens of $D_{k_3-i}$.}
                     \end{array}
                   \right.
\end{split}
\end{equation}
where $i=0,1$ in the above. In \cite{BKext1} (See also
\cite{Lysov}), matrices $\tilde{S}^k(z)$ was constructed such that
when $S^k(z)$ is related to $\tilde{S}^k(z)$ by (\ref{eq:tildeSk}),
$S^k(z)$ will solve the problem (\ref{eq:localpara}). These matrices
have the following form (See also \cite{BI}, \cite{D}, \cite{DKV},
\cite{DKV2}).
\begin{equation}\label{eq:local}
\tilde{S}^k(z)=E_k(z)W_k\Phi_k\left(M^{\frac{2}{3}}\zeta(z)\right)Z_k,\quad
z\in D_k,
\end{equation}
where $Z_k$ is the diagonal matrix
\begin{equation}\label{eq:Zk}
\begin{split}
Z_k&=\mathrm{diag}\left(e^{\frac{1}{2}M\left(\theta_1^N(z)-\tilde{\theta}_2^N(z)\right)},e^{-\frac{1}{2}M\left(\theta_1^N(z)-\tilde{\theta}_2^N(z)\right)}
,1\right),\quad k=k_2,k_2-1,\\
Z_k&=\mathrm{diag}\left(e^{\frac{1}{2}M\left(\theta_1^N(z)-\tilde{\theta}_3^N(z)\right)},1,
e^{-\frac{1}{2}M\left(\theta_1^N(z)-\tilde{\theta}_3^N(z)\right)}
\right),\quad k=k_3,k_3-1.
\end{split}
\end{equation}
The matrix $E_k(z)$ is a holomorphic matrix inside $D_k$ that is
bounded as $M$, $N$ and $N_1\rightarrow\infty$. The matrix $W_k$ is
a constant invertible diagonal matrix of order $M^{\frac{1}{6}}$.
They are here to fix the boundary condition in (\ref{eq:localpara}).
The $\Phi_k(z)$ is a matrix whose entries consist of the different
branches of the Airy function.

For example, when $k=4$ and $a<1$, the matrices $E_k(z)$, $W_k$ and
$\Phi_k(z)$ are given by
\begin{equation*}
\begin{split}
\Phi_4(z)&=\left\{
            \begin{array}{ll}
              \begin{pmatrix}v_0(z)&-v_2(z)&0\\
                              v_0^{\prime}(z)&-v_2^{\prime}(z)&0\\
              0&0&1\end{pmatrix}, & \hbox{for $0<\arg(z)<\frac{2\pi}{3}$;} \\
              \begin{pmatrix}-v_1(z)&-v_2(z)&0\\
                              -v_1^{\prime}(z)&-v_2^{\prime}(z)&0\\
              0&0&1\end{pmatrix}, & \hbox{for $\frac{2\pi}{3}<\arg(z)<\pi$;} \\
              \begin{pmatrix}-v_2(z)&v_1(z)&0\\
                              -v_2^{\prime}(z)&v_1^{\prime}(z)&0\\
              0&0&1\end{pmatrix}, & \hbox{for $-\pi<\arg(z)<-\frac{2\pi}{3}$;} \\
              \begin{pmatrix}v_0(z)&v_1(z)&0\\
                              v_0^{\prime}(z)&v_1^{\prime}(z)&0\\
              0&0&1\end{pmatrix}, & \hbox{for $-\frac{2\pi}{3}<\arg(z)<0$.}
            \end{array}
          \right.\\
E_4(z)&=\sqrt{\pi}S^{\infty}(z)\begin{pmatrix}1&-1&0\\
                              -i&-i&0\\
              0&0&1\end{pmatrix}\begin{pmatrix}\zeta^{\frac{1}{4}}&0&0\\
                              0&\zeta^{-\frac{1}{4}}&0\\
              0&0&1\end{pmatrix},\\
W_4&=diag(M^{\frac{1}{6}},M^{-\frac{1}{6}},1).
\end{split}
\end{equation*}
where the functions $v_j(z)$ are given by the Airy functions
$v_0(z)=\mathrm{Ai}(z)$, $v_1(z)=\omega \mathrm{Ai}(\omega z)$ and
$v_2(z)=\omega^2\mathrm{Ai}(\omega^2z)$. Note that despite the
apparent singularity $\zeta^{-\frac{1}{4}}$ in the expression of
$E_4(z)$, it turns out that $E_4(z)$ is holomorphic inside $D_4$
because the factor $S^{\infty}(z)$ contains terms that would cancel
out this singularity.

\subsection{Last transformation of the Riemann-Hilbert problem}
Let us now show that the parametrices we constructed in Section
\ref{se:outer} and Section \ref{se:local} are indeed good
approximation to the solution $S(z)$ of the Riemann-Hilbert problem
(\ref{eq:RHPS}).

Let us define $R(z)$ to be the following matrix
\begin{equation}\label{eq:Rx}
\begin{split}
R(z)=\left\{
       \begin{array}{ll}
         S(z)\left(S^{k}(z)\right)^{-1}, & \hbox{$z$ inside $D_k$, $k=1,\ldots,4$;} \\
         S(z)\left(S^{\infty}(z)\right)^{-1}, & \hbox{$z$ outside of $D_k$, $k=1,\ldots,4$.}
       \end{array}
     \right.
\end{split}
\end{equation}
Then the function $R(z)$ has jump discontinuities on the contour
$\Gamma_R$ shown in Figure \ref{fig:sigma}.
\begin{figure}
\centering
\includegraphics[scale=0.75]{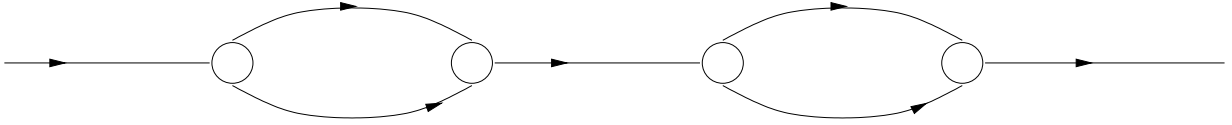}
\caption{The contour $\Gamma_R$.}\label{fig:sigma}
\end{figure}
In particular, $R(z)$ satisfies the Riemann-Hilbert problem
\begin{equation}\label{eq:RHR}
\begin{split}
&1. \quad \text{$R(z)$ is analytic in $\mathbb{C}\setminus\Gamma_R$}\\
&2.\quad R_+(z)=R_-(z)J_R(z)\\
&3. \quad R(z)=I+O(z^{-1}),\quad z\rightarrow\infty, \\
&4.\quad R(z)=O(1),\quad z\rightarrow 0.
\end{split}
\end{equation}
From the definition of $R(z)$ (\ref{eq:Rx}), it is easy to see that
the jumps $J_R(z)$ has the following order of magnitude.
\begin{equation}\label{eq:Jx}
\begin{split}
J_R(z)=\left\{
       \begin{array}{ll}
         I+O(M^{-1}), & \hbox{$z\in\p D_k$, $k=1,\ldots, 4$ ;} \\
         I+O\left(e^{-M\eta}\right), & \hbox{for some fixed $\eta>0$ on
the rest of $\Gamma_R$.}
       \end{array}
     \right.
\end{split}
\end{equation}
Then by the standard theory, \cite{D}, \cite{DKV}, \cite{DKV2}, we
have
\begin{equation}\label{eq:Rest}
\begin{split}
R(z)=I+O\left(\frac{1}{M(|z|+1)}\right),
\end{split}
\end{equation}
uniformly in $\mathbb{C}$.

In particular, the solution $S(z)$ of the Riemann-Hilbert problem
(\ref{eq:RHPS}) can be approximated by $S^{\infty}(z)$ and
$S^{k}(z)$ as
\begin{equation}\label{eq:approxS}
\begin{split}
S(z)=\left\{
       \begin{array}{ll}
         \left(I+O\left(M^{-1}\right)\right)S^{k}(z), & \hbox{$z\in D_k$, $k=1,\ldots,4$;} \\
         \left(I+O\left(M^{-1}\right)\right)S^{\infty}(z), & \hbox{$z$ outside of
$D_k$, $k=1,\ldots,4$.}
       \end{array}
     \right.
\end{split}
\end{equation}
\section{Universality of the correlation kernel}
We shall now use the asymptotics of the multiple Laguerre
polynomials obtained in the last section to prove the universality
results in Theorem \ref{thm:main2}. Since the proofs are the same as
the ones in Section 9 of \cite{BKext1}, (See also \cite{BI},
\cite{D}, \cite{DKV}, \cite{DKV2}) we shall only carry out the the
proof for (\ref{eq:bulk}) and leave the readers to verify
(\ref{eq:edge}).

First let us express the correlation kernel $\hat{K}_{M,N}(x,y)$ in
(\ref{eq:khat}) in terms of the matrix $S(x)$ in (\ref{eq:Sz}). By
(\ref{eq:kerRHP}), (\ref{eq:khat}), (\ref{eq:Tz}) and (\ref{eq:Sz}),
we have
\begin{equation}\label{eq:Sker2}
K_{M,N}(x,y)=\frac{\left(x^{-1}y\right)^{\frac{M-N}{2}}}{2\pi
i(x-y)}\left(-e^{M\theta_{1,+}^N(y)}\quad
e^{M\tilde{\theta}_{2,+}^N(y)}\quad
0\right)S_+^{-1}(y)S_+(x)\begin{pmatrix} e^{-M\theta_{1,+}^N(x)}
\\ e^{-M\tilde{\theta}_{2,+}^N(x)}\\0
\end{pmatrix},
\end{equation}
for $x,y\in[\lambda_{k_2-1}^N,\lambda_{k_2}^N]$ and
\begin{equation}\label{eq:Sker3}
K_{M,N}(x,y)=\frac{\left(x^{-1}y\right)^{\frac{M-N}{2}}}{2\pi
i(x-y)}\left(-e^{M\theta_{1,+}^N(y)}\quad 0\quad
e^{M\tilde{\theta}_{3,+}^N(y)}\right)S_+^{-1}(y)S_+(x)\begin{pmatrix}
e^{-M\theta_{1,+}^N(x)}
\\ 0\\e^{-M\tilde{\theta}_{3,+}^N(x)}
\end{pmatrix}
\end{equation}
for $x,y\in[\lambda_{k_3-1}^N,\lambda_{k_3}^N]$. Now note that,
since $\xi_1^N(z)$ and $\xi_j^N(z)$ are complex conjugates on
$[\lambda_{k_3-1}^N,\lambda_{k_3}^N]$, by (\ref{eq:thetaN}), we have
\begin{equation}\label{eq:conju1}
\theta_{1,+}^N(z)-\theta_{1,+}^N(\lambda_{k_j}^N)=\overline{\theta_{j,+}^N(z)},
\end{equation}
where $\overline{A}$ denotes the complex conjugation.

If $k_j=4$, then
$\theta_{1,+}^N(\lambda_{k_j}^N)=\theta_{1,-}(\lambda_{k_j}^N)=0$
and if $k_j=2$, then
\begin{equation*}
\begin{split}
\theta_{1,\pm}^N(\lambda_{k_j}^N)&=\int_{\lambda_{4}^N}^{\lambda_2^N}\xi_{1,\pm}^N(x)dx,\\
&=\int_{\lambda_{4}^N}^{\lambda_3^N}\xi_{1,\pm}^N(x)dx+\int_{\lambda_{3}^N}^{\lambda_2^N}\xi_{1,\pm}^N(x)dx.
\end{split}
\end{equation*}
Since $\xi_{1,+}^N(z)=\overline{\xi_{1,-}^N(z)}$ in
$[\lambda_3^N,\lambda_4^N]$ and
$\xi_{1,+}^N(z)=\xi_{1,-}^N(z)\in\mathbb{R}$ in
$[\lambda_2^N,\lambda_3^N]$, we see that, in either case of $k_j=2$
or $k_j=4$, we have
$\theta_{1,+}^N(\lambda_{k_j}^N)=\overline{\theta_{1,-}^N(\lambda_{k_j}^N)}$.
Hence we have, by (\ref{eq:conju1}) and (\ref{eq:JT}),
\begin{equation}\label{eq:conju}
\theta_{1,+}^N(z)=\overline{\tilde{\theta}_{j,+}^N(z)},\quad
z\in[\lambda_{k_j-1}^N,\lambda_{k_j}^N].
\end{equation}
By the same argument, same relations between $\theta_j(z)$ and
$\tilde{\theta}_j(z)$ can be obtained.

By substituting (\ref{eq:conju}) and (\ref{eq:khat}) into
(\ref{eq:Sker2}), we obtain
\begin{equation}\label{eq:Sker21}
\begin{split}
\hat{K}_{M,N}(x,y)&=\frac{e^{-M\left(h_N(x)-h_N(y)\right)}}{2\pi
i(x-y)}\left(-e^{M\mathrm{Im}\left(\theta_{1,+}^N(y)\right)}\quad
e^{-M\mathrm{Im}\left(\theta_{1,+}^N(y)\right)}\quad
0\right)S_+^{-1}(y)\\
&\times S_+(x)\begin{pmatrix} e^{-MIm\left(\theta_{1,+}^N(x)\right)}
\\ e^{MIm\left(\theta_{1,+}^N(x)\right)}\\0
\end{pmatrix},
\end{split}
\end{equation}
on $[\lambda_{k_2-1}^N,\lambda_{k_2}^N]$, where $h_N(x)$ is given by
\begin{equation*}
\begin{split}
h_N(x)&=\mathrm{Re}\left(\theta_{1,+}^N(x)\right)-\frac{1}{2}\left(\theta_{1,+}(x)+\theta_{j,+}(x)
+\theta_{1,-}(\lambda_{k_j})\right) \\
&=
\mathrm{Re}\left(\theta_{1,+}^N(x)-\theta_{1,+}(x)\right).
\end{split}
\end{equation*}
where we have used (\ref{eq:conju}) in the second equality. We can
obtain similar expression for
$x,y\in[\lambda_{k_3-1}^N,\lambda_{k_3}^N]$ by substituting
(\ref{eq:conju}) into (\ref{eq:Sker3}).

We can now apply (\ref{eq:approxS}) to (\ref{eq:Sker21}) and its
counterpart on $[\lambda_{k_3-1}^N,\lambda_{k_3}^N]$ to prove
(\ref{eq:bulk}) in Theorem \ref{thm:main2}.

Let us consider the points $x$ and
$y\in(\lambda_{k_j-1},\lambda_{k_j})$ such that
\begin{equation}\label{eq:xy}
x=x_0+\frac{u}{M\rho_N(x_0)},\quad y=x_0+\frac{v}{M\rho_N(x_0)},
\end{equation}
where $x_0\in(\lambda_{k_j-1},\lambda_{k_j})$. For large enough $N$,
we can assume that $x_0$, $x$ and $y$ are in
$(\lambda_{k_j-1}^N,\lambda_{k_j}^N)$ and outside of $D_{k_j-1}$ and
$D_{k_j}$. The function $\rho_N(z)$ in (\ref{eq:xy}) is given by
\begin{equation}\label{eq:rhoN}
\rho_N(z)=\frac{1}{\pi}\mathrm{Im}\left(\xi_{1,+}^N(z)\right),
\end{equation}
Let us replace the matrix $S_+^{-1}(y)S_+(x)$ in (\ref{eq:Sker21})
by
\begin{equation*}
S_+^{-1}(y)S_+(x)=\left(S_+^{\infty}(y)\right)^{-1}R^{-1}(y)R(x)S^{\infty}_+(x).
\end{equation*}
Since $R(z)$ is analytic at $x_0$, by (\ref{eq:Rest}) and by
considering the power series expansion of $R(z)$ at $x_0$, we see
that $R^{-1}(y)R(x)$ is of order $I+O\left(\frac{x-y}{M}\right)$. On
the other hand, since $x_0$ is outside of the neighborhoods $D_k$ of
the branch points $\lambda_k^N$, the matrix-valued function
$S_+^{\infty}(z)$ has an analytic continuation in a neighborhood of
$x_0$ that includes both $x$ and $y$. Hence by considering the power
series expansion of $S_+^{\infty}(z)$ at $x_0$, we have
\begin{equation}\label{eq:Sorder}
\begin{split}
S_+^{-1}(y)S_+(x)&=\left(S_+^{\infty}(y)\right)^{-1}R^{-1}(y)R(x)S^{\infty}_+(x)\\
&=\left(S_+^{\infty}(y)\right)^{-1}\left(I+O\left(\frac{x-y}{M}\right)\right)S^{\infty}_+(x),\\
&=I+O\left(x-y\right)
\end{split}
\end{equation}
By substituting (\ref{eq:Sorder}) back into (\ref{eq:Sker21}) and
its counter part on $[\lambda_{k_3-1}^N,\lambda_{k_3}^N]$, we obtain
\begin{equation}\label{eq:Sker22}
\hat{K}_{M,N}(x,y)=e^{-M\left(h_N(x)-h_N(y)\right)}\left(\frac{\sin\left(M\mathrm{Im}
\left(\theta_{1,+}^N(x)-\theta_{1,+}^N(y)\right)\right)}{\pi(x-y)}
+O(1)\right),
\end{equation}
for $x,y\in(\lambda_{k_j-1}^N,\lambda_{k_j}^N)$. Since
$\theta_{1,+}^N(z)$ is given by (\ref{eq:thetaN}) and that $x_0$ is
away from the branch points $\lambda_k^N$, the functions
$\theta_{1,+}^N(z)$ and $\xi_{1,+}^N(z)$ can be continued
analytically in a neighborhood of $x_0$. If we denote these
analytical continuation by the same symbols, we will have
\begin{equation}\label{eq:thetax0}
\theta_{1,+}^N(z)=\theta_{1,+}^N(x_0)+\xi_{1,+}^N(z-x_0)+O\left((z-x_0)^2\right),\quad
z\rightarrow x_0.
\end{equation}
Now by substituting (\ref{eq:xy}) and (\ref{eq:thetax0}) into
(\ref{eq:Sker22}), we obtain
\begin{equation}\label{eq:sine}
\hat{K}_{M,N}(x,y)=e^{-M\left(h_N(x)-h_N(y)\right)}\left(M\rho_N(x_0)\frac{\sin\left(\mathrm{Im}
\left(\frac{\xi_{1,+}^N(x_0)}{\rho_N(x_0)}\right)(u-v)\right)}{\pi(u-v)}
+O(1)\right).
\end{equation}
From the definition of $\rho_N(z)$ (\ref{eq:rhoN}), we see that
\begin{equation*}
\mathrm{Im} \left(\frac{\xi_{1,+}^N(x_0)}{\rho_N(x_0)}\right)=\pi
\end{equation*}
Now recall that the density $\rho(z)$ on the support of
$\underline{F}$ is given by (See Theorem \ref{thm:density})
\begin{equation*}
\rho(z)=\frac{1}{\pi}\mathrm{Im}\left(\xi_{1,+}(z)\right).
\end{equation*}
As the solutions of (\ref{eq:curve2}) are continuous in the
parameters $a$, $\beta$ and $c$, we have
\begin{equation}\label{eq:xirho}
\begin{split}
\lim_{N\rightarrow\infty}\xi_{1,+}^N(x_0)=\xi_{1,+}(x_0),\quad
\lim_{N\rightarrow\infty}\rho_N(x_0)=\rho(x_0).
\end{split}
\end{equation}
Let us now show that
\begin{equation}\label{eq:hlim}
\lim_{M,N\rightarrow\infty}e^{-M\left(h_N(x)-h_N(y)\right)}=1.
\end{equation}
By (\ref{eq:xy}) and (\ref{eq:thetax0}), we see that
\begin{equation*}
\lim_{M,N\rightarrow\infty}M\left(h_N(x)-h_N(y)\right)=\lim_{M,N\rightarrow\infty}\mathrm{Re}\left(\xi_{1,+}^N(x_0)-\xi_{1,+}(x_0)\right)\frac{u-v}{\rho_N(x_0)}.
\end{equation*}
Then by (\ref{eq:xirho}), we obtain (\ref{eq:hlim}).

Now if we replace $u$ and $v$ in (\ref{eq:sine}) by the real
variables $u\rightarrow\frac{u\rho_N(x_0)}{\rho(x_0)}$,
$v\rightarrow\frac{v\rho_N(x_0)}{\rho(x_0)}$ and take the limit, we
obtain
\begin{equation}\label{eq:sineker}
\lim_{N,M\rightarrow\infty}\frac{\hat{K}_{M,N}\left(x_0+\frac{u}{\rho(x_0)},x_0+\frac{v}{\rho(x_0)}\right)}{M}=\rho(x_0)\frac{\sin
\pi(u-v)}{\pi(u-v)}.
\end{equation}
This proves (\ref{eq:bulk}).

The proof for (\ref{eq:edge}) is the same as the one in Section 9 of
\cite{BKext1}. Let $x$ and $y$ be
\begin{equation}\label{eq:xyedge}
x=\lambda_k^N+\frac{u}{\left(\rho_k^NM\right)^{\frac{2}{3}}},\quad
y=\lambda_k^N+\frac{v}{\left(\rho_k^NM\right)^{\frac{2}{3}}}.
\end{equation}
Then following through the details in Section 9 of \cite{BKext1}, we
would arrive at
\begin{equation}\label{eq:edgeN}
\begin{split}
&\frac{e^{-M\left(h_N(x)-h_N(y)\right)}}{\left(M\rho_k^N\right)^{\frac{2}{3}}}
\hat{K}_{M,N}\left(\lambda_k^N+\frac{u}{\left(M\rho_k^N\right)^{\frac{2}{3}}},
\lambda_k^N+\frac{v}{\left(M\rho_k^N\right)^{\frac{2}{3}}}\right)=\\
&\frac{\mathrm{Ai}(u)\mathrm{Ai}^{\prime}(v)
-\mathrm{Ai}^{\prime}(u)\mathrm{Ai}(v)}{u-v}+O\left(M^{-\frac{1}{3}}\right),
\end{split}
\end{equation}
We will leave it to the readers to verify the details. To obtain
(\ref{eq:edge}) from (\ref{eq:edgeN}), we will need to use the
following.
\begin{lemma}\label{le:lambdak}
Under the limit (\ref{eq:lim}), $\lambda_k$ satisfies the following
\begin{equation*}
\lambda_k=\lambda_k^N+O\left(M^{-1}\right)
\end{equation*}
\end{lemma}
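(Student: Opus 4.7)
The plan is to exploit that both $\lambda_k$ and $\lambda_k^N$ are obtained by a smooth (in fact rational) procedure from the parameters $(a,c,\beta)$ and $(a,c_N,\beta_N)$ respectively, and that the hypothesis \eqref{eq:lim} forces $c_N-c$ and $\beta_N-\beta$ to be of order $M^{-1}$. More precisely, from $cM-N=\tau_1=O(1)$ we get $c_N = c + O(M^{-1})$, and from $N\beta-N_1=\tau_2=O(1)$ together with $N\asymp M$ we get $\beta_N=\beta+O(M^{-1})$. Hence every coefficient of the quartic \eqref{eq:quarticN} differs from the corresponding coefficient of \eqref{eq:quartic1} by at most $O(M^{-1})$.

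First I would apply standard perturbation theory for simple roots of a polynomial. By Theorem \ref{thm:cuts}, the assumption $\Delta>0$ ensures that \eqref{eq:quartic1} has four distinct real roots $\gamma_1<\gamma_2<\gamma_3<\gamma_4$, so each root is simple and the derivative of the quartic does not vanish at $\gamma_k$. The implicit function theorem, applied to the bivariate family of quartics parametrised by $(c,\beta)$, then gives that each root $\gamma_k^N$ of \eqref{eq:quarticN} is a smooth function of $(c_N,\beta_N)$ in a neighbourhood of $(c,\beta)$, with
\begin{equation*}
\gamma_k^N=\gamma_k+O\left(|c_N-c|+|\beta_N-\beta|\right)=\gamma_k+O(M^{-1}).
\end{equation*}
In particular, for all large $M$ the quartic \eqref{eq:quarticN} still has four simple real roots, which is consistent with the remark immediately after \eqref{eq:curveN}.

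Next I would feed these perturbed roots into the rational map
\begin{equation*}
\Phi(c,\beta,\gamma)=-\frac{1}{\gamma}+\frac{c(1-\beta)}{1+\gamma}+\frac{ca\beta}{1+a\gamma},
\end{equation*}
so that $\lambda_k=\Phi(c,\beta,\gamma_k)$ and $\lambda_k^N=\Phi(c_N,\beta_N,\gamma_k^N)$. By Corollary \ref{cor:posit} and Lemma \ref{le:singu}, all $\gamma_k$ are strictly negative and none of them equals $0$, $-1$ or $-1/a$, so $\Phi$ is $C^1$ in a neighbourhood of $(c,\beta,\gamma_k)$. A first-order Taylor expansion gives
\begin{equation*}
\lambda_k^N-\lambda_k=\partial_c\Phi\cdot(c_N-c)+\partial_\beta\Phi\cdot(\beta_N-\beta)+\partial_\gamma\Phi\cdot(\gamma_k^N-\gamma_k)+O\left(M^{-2}\right),
\end{equation*}
and each of the three leading terms is $O(M^{-1})$, yielding the claim.

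The only real obstacle is to make sure the simple-root hypothesis is preserved for all large $M$, so that the implicit function theorem applies uniformly. This follows because the discriminant $\Delta_N$ of \eqref{eq:quarticN} is a polynomial function of $(c_N,\beta_N)$ and is therefore continuous, so $\Delta_N\to\Delta>0$ as $M\to\infty$; thus there exists $M_0$ such that $\Delta_N>\tfrac{1}{2}\Delta$ for $M\geq M_0$, which guarantees a uniform lower bound on the separation of the four roots $\gamma_k^N$ and on $|\partial_\gamma\Phi|$ stays bounded. Beyond this uniformity issue, the proof is a direct application of smooth dependence of simple polynomial roots on coefficients.
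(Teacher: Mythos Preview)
Your proof is correct and follows essentially the same approach as the paper: both exploit that the $\gamma_k$ are simple roots of the quartic \eqref{eq:quartic1}, so that an $O(M^{-1})$ perturbation of the coefficients (coming from $c_N-c,\ \beta_N-\beta=O(M^{-1})$) yields an $O(M^{-1})$ perturbation of the roots. Your version is in fact slightly more complete, since you make explicit the passage from $\gamma_k^N-\gamma_k=O(M^{-1})$ to $\lambda_k^N-\lambda_k=O(M^{-1})$ via the smooth map $\Phi$, whereas the paper carries out only the first step and then asserts the conclusion for $\lambda_k$ directly.
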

\begin{proof} Let $\alpha_{\epsilon}$ be a solution
of the following quartic equation
\begin{equation*}
\begin{split}
W_0(\alpha_{\epsilon})+\epsilon W_1(\alpha_{\epsilon}) =0.
\end{split}
\end{equation*}
where $W_j(x)$ are polynomials of degree $n-j$ whose coefficients
are bounded in $\epsilon$. Let $\alpha_0$ be the solution when
$\epsilon=0$. Suppose $\alpha_{\epsilon}=\alpha_{0}+\delta$. Then we
have
\begin{equation*}
\delta W_0^{\prime}(\alpha_0)+\epsilon
W_1(\alpha_0)+O(\delta^2)+O(\delta\epsilon)+O(\epsilon^2)=0.
\end{equation*}
Therefore if $\frac{\epsilon}{\delta}=o(1)$, we will have
\begin{equation*}
W_0^{\prime}(\alpha_0)=0.
\end{equation*}
This implies that $\alpha_0$ is a double root of $W_0(x)$. We can
apply this to the polynomial (\ref{eq:quartic1}) and
(\ref{eq:quarticN}). By (\ref{eq:lim}), we have
$c-c_N=O\left(M^{-1}\right)$ and
$\beta-\beta_N=O\left(M^{-1}\right)$. Therefore the differences
between (\ref{eq:quartic1}) and (\ref{eq:quarticN}) is a cubic
polynomial whose coefficients are of order $O\left(M^{-1}\right)$.

Since the polynomial (\ref{eq:quartic1}) has no double root, the
difference between $\lambda_k$ and $\lambda_k^N$ must be of order
$O\left(M^{-1}\right)$.
\end{proof}
Let us now show that (\ref{eq:hlim}) is true in this case. Near the
edge point $\lambda_k$, the function $h_N(x)-h_N(y)$ is given by
\begin{equation*}
\begin{split}
h_N(x)-h_N(y)&=\frac{2}{3}\left(\frac{u^{\frac{3}{2}}-v^{\frac{3}{2}}}{M}\right)-
\frac{2}{3}\rho_k\left(\left(\lambda_k^N-\lambda_k+\frac{u}{\left(\rho_k^NM\right)^{\frac{2}{3}}}\right)\right)^{\frac{3}{2}}
\\
&+\frac{2}{3}\rho_k\left(\left(\lambda_k^N-\lambda_k+\frac{v}{\left(\rho_k^NM\right)^{\frac{2}{3}}}\right)\right)^{\frac{3}{2}}
+O\left(M^{-\frac{4}{3}}\right).
\end{split}
\end{equation*}
Since $\lambda_k^N-\lambda_k=O\left(M^{-1}\right)$, the above
becomes
\begin{equation*}
h_N(x)-h_N(y)=\frac{2}{3}\left(1-\frac{\rho_k}{\rho_k^N}\right)
\left(\frac{u^{\frac{3}{2}}-v^{\frac{3}{2}}}{M}\right)+O\left(M^{-\frac{4}{3}}\right).
\end{equation*}
This implies that (\ref{eq:hlim}) is true also at the edge.

Now let us replace $u$ and $v$ in (\ref{eq:edgeN}) by
\begin{equation*}
\begin{split}
u&\rightarrow
\left(\frac{\rho_k^N}{\rho_k}\right)^{\frac{2}{3}}u-\left(\rho_k^NM\right)^{\frac{2}{3}}(\lambda_k^N-\lambda_k),\\
v&\rightarrow
\left(\frac{\rho_k^N}{\rho_k}\right)^{\frac{2}{3}}v-\left(\rho_k^NM\right)^{\frac{2}{3}}(\lambda_k^N-\lambda_k).
\end{split}
\end{equation*}
Since $\lambda_k^N-\lambda_k=O\left(M^{-1}\right)$, the new $u$ and
$v$ are still real and of the same order in $M$. By making these
replacement, and then take the limit of (\ref{eq:edgeN}), we arrive
at (\ref{eq:edge}).

\vspace{.25cm}

\noindent\rule{16.2cm}{.5pt}

\vspace{.25cm}

{\small

\noindent {\sl School of Mathematics \\
                       University of Bristol\\
                       Bristol BS8 1TW, UK  \\
                       Email: {\tt m.mo@bristol.ac.uk}

                       \vspace{.25cm}

                       \noindent  22 September  2008}}

\end{document}